\documentclass[10pt]{amsart}

\textwidth=6.7in
\oddsidemargin=-0.1in
\evensidemargin=-0.1in
\baselineskip=16pt

\makeatletter

\sloppy
\addtolength{\textwidth}{1cm}

\setlength{\baselineskip}{0,5cm}
\setlength{\oddsidemargin}{1\marginparsep}
\setlength{\topmargin}{-0,5\footskip}
\setlength{\textwidth}{14,2cm}
\setlength{\textheight}{40\baselineskip}

\usepackage{amssymb, amsfonts}
\usepackage{amsmath}
\usepackage{amsthm}
\usepackage{enumerate}

\newtheorem{theorem}{Theorem}[section]
\newtheorem{lemma}[theorem]{Lemma}
\newtheorem{proposition}[theorem]{Proposition}
\newtheorem{corollary}[theorem]{Corollary}

\newcommand{\R}{\mathbb{R}}
\newcommand{\N}{\mathbb{N}}
\newcommand{\C}{\mathbb{C}}


\newcommand{\cB}{{\mathcal B}}   
\newcommand{\cC}{{\mathcal C}}   
\newcommand{\cD}{{\mathcal D}}

\newcommand{\cH}{{\mathcal H}}

\newcommand{\cL}{{\mathcal L}}

\newcommand{\cR}{{\mathcal R}}

\newcommand{\cW}{{\mathcal W}}   
   
\newcommand{\cY}{{\mathcal Y}}   
\newcommand{\cZ}{{\mathcal Z}}

\newcommand{\weak}{\rightharpoonup}

\newcommand{\embed}{\hookrightarrow}

\renewcommand{\dim}{{\rm dim}\,}
\newcommand{\eps}{\varepsilon}

\renewcommand{\Re}{\textrm{Re}}

\newcommand{\irn}{\int_{\R^N}}

\DeclareMathOperator{\spann}{span}

\DeclareMathOperator{\trace}{trace}

\begin{document}

\title{Real solutions to the nonlinear Helmholtz equation with local
nonlinearity}

\author{Gilles Evequoz}
\address{Institut f\"ur Mathematik - Goethe-Universit\"at Frankfurt, Robert-Mayer-Str.\ 10. - 60054 Frankfurt (Germany)}
\email{evequoz@math.uni-frankfurt.de}

\author{Tobias Weth}
\address{Institut f\"ur Mathematik - Goethe-Universit\"at Frankfurt, Robert-Mayer-Str.\ 10. - 60054 Frankfurt (Germany)}
\email{weth@math.uni-frankfurt.de}

\keywords{Helmholtz equation \and entire solutions \and local nonlinearity \and Dirichlet to Neumann map \and
 variational methods.}

\begin{abstract}
In this paper, we study real solutions of the nonlinear Helmholtz equation
$$
- \Delta u - k^2 u = f(x,u),\qquad x\in \R^N 
$$
satisfying the asymptotic conditions
$$
u(x)=O(|x|^{\frac{1-N}{2}}) \quad \text{and}
\quad \frac{\partial^2 u}{\partial r^2}(x)+k^2 u(x)=o(|x|^{\frac{1-N}{2}})
\qquad \text{as  $r=|x| \to \infty$.}
$$
We develop the variational framework to prove the existence of nontrivial solutions for
compactly supported nonlinearities without any symmetry assumptions.
In addition, we consider the radial case 
in which, for a larger class of nonlinearities, infinitely many
solutions are shown to exist. Our results give rise to the existence
of standing wave solutions of corresponding nonlinear Klein--Gordon
equations with arbitrarily large frequency.

\end{abstract}

\maketitle

\section{Introduction}\label{sec-introduction}
The study of the existence and qualitative properties of solutions to nonlinear wave equations
\begin{equation}\label{nlw0}
\frac{\partial^2 \psi}{\partial t^2}(t,x) - \Delta \psi(t,x) +V(x)\psi(t,x)  =f(x,\psi(t,x)), \qquad (t,x)\in\R\times\R^N,
\end{equation}
goes back to the sixties, see e.g. the classical paper by J\"orgens \cite{joergens61}. Since then,
many authors have been investigating various aspects of this problem, including the question of existence and orbital 
stability of {\em standing wave (or solitary wave) solutions}. An
ansatz for solutions of this type is given by
\begin{equation}
	\psi(t,x)=e^{-i (\omega t +\varphi)}u(x), \qquad \omega,\varphi  \in\R,
\end{equation}
with a real-valued function $u$ on $\R^N$. Taking for example a nonlinearity of the form $f(x,\psi)=g(x,|\psi|^2)\psi$ 
where $g$ is a real-valued function, we see that
such a $\psi$ solves \eqref{nlw0} if and only if $u$ solves the
reduced wave equation
\begin{equation}\label{eq:333}
- \Delta u +V(x)u - \omega^2 u = f(x,u),\qquad x\in \R^N.  
\end{equation}
The requirement that $u$ be real-valued guarantees that the
corresponding energy density 
$$
E(t,x)=\frac{1}{2}\Bigl(\bigl|\frac{\partial \psi}{\partial
  t}(t,x)\bigr|^2 +|\nabla \psi(t,x)|^2+V(x)|\psi(t,x)|^2 -\int_0^{|\psi(t,x)|^2}g(x,\tau)\,d\tau\Bigr)
$$
is constant in $t$ at every point $x\in\R^N$, a property which is characteristic of
standing wave solutions of (\ref{nlw0}). Note that, due to the presence of the linear potential $V$, it is natural to 
consider nonlinearities $f$ satisfying $\partial_u f(x,0)=0$ on $\R^N$. In this case, in
almost all of the available literature it is assumed
that $\omega^2$ is not contained in the essential spectrum of the
Schr\"odinger operator $-\Delta +V$. We refer the reader to the
surveys and monographs
\cite{ambrosetti-malchiodi-a,cerami06,rabinowitz91,struwe,stuart97,willem} and the references 
therein for results in this case. 
In the special case where $V \equiv V_0$ is
a constant, this restriction amounts to assuming $V_0>\omega^2\geq 0$. 
Some authors have also considered the limiting case where $\omega^2$ coincides with the
infimum or another boundary point of the essential spectrum of $V$, see
e.g. \cite{kuzin-pohozaev} for a survey of classical results
and \cite{alves-souto-montenegro12,ambrosetti-malchiodi-felli:05,ambrosetti-malchiodi-a,bartsch-ding99,costa-tehrani01,moroz-van-schaftingen10,schneider:03,willem-zou03}
for more recent work in this case. On the contrary, very
little seems to be known if $\omega^2$ is contained in the interior of
the essential
spectrum of $V$. 
In the present paper, we consider this situation in the special case $V \equiv V_0$ and $\omega^2>V_0$, so by setting
$k^2=\omega^2-V_0$ we arrive at the 
nonlinear Helmholtz equation 
\begin{equation}\label{eq:33}
- \Delta u - k^2 u = f(x,u),\qquad x\in \R^N,  
\end{equation}
with $k>0$. It seems unclear a priori in which space one
should approach this problem and whether variational methods can be used. 
In the present paper, we provide first results in the case where $f$
is supported in a bounded subset of $\R^N$, i.e. $f$ vanishes in $[\R^N \setminus B_R] \times
\R$ for $R>0$ sufficiently large. Here $B_R \subset \R^N$ denotes the open ball
centered at $0$ with radius $R$. Note that in this case no nontrivial solution
of \eqref{eq:33} exists in the space $L^2(\R^N)$, as follows
immediately from a classical result of Rellich, see \cite[Satz
1]{rellich43}. We will focus instead on solutions satisfying the asymptotic
conditions
\begin{equation}\label{eqn:radiation2}
u(x)= O(|x|^{\frac{1-N}{2}}) \quad \text{and}\quad 
\frac{\partial^2 u}{\partial r^2}(x) +k^2 u(x)=o(|x|^{\frac{1-N}{2}})
\quad \text{as }r=|x|\to\infty.
\end{equation}
Thus the solutions decay to zero as $|x| \to \infty$ if $N \ge 2$. 
We emphasize that in general not all solutions of (\ref{eq:33}) satisfy
(\ref{eqn:radiation2}) if $N \ge 2$. In particular, in case $N \ge 2$ and $f \equiv 0$,
(\ref{eqn:radiation2}) is not satisfied by one-dimensional standing wave
solutions of (\ref{eq:33}) given by 
\begin{equation}
  \label{eq:31}
x \mapsto \sin (kx \cdot \xi +\varphi) \qquad \text{with a unit vector $\xi \in \R^N$ and $\varphi \in \R$.}
\end{equation}

The above restriction on $f$ allows us to work with the Dirichlet to Neumann map associated with the exterior problem for the linear
Helmholtz equation $\Delta u +k^2 u=0$ on $\R^N \setminus B_R$ together
with a suitable asymptotic condition on $u$. To explain this in more detail, let us suppose for a moment that the nonlinearity $f(x,u)$ is replaced by an
inhomogeneous source term $f(x)$ supported in $B_R$. In this case, 
a well-studied problem is to analyze the far field expansion of the
(unique) complex solution of
\eqref{eq:33} satisfying the 
Sommerfeld (outgoing) radiation condition
\begin{equation}\label{eqn:radiation1}
\left|\frac{\partial u}{\partial r}(x)- ik u(x)\right|=o(r^{\frac{1-N}{2}}) \qquad \text{ as }r=|x|\to\infty.
\end{equation}
This condition has been introduced in Sommerfeld's classical work
\cite{sommerfeld12}, and it corresponds to the study of 
outgoing waves excited by
the source term $f(x)$. Moreover, by a well-known result (going
back to Rellich \cite[p.58]{rellich43}), for given and
 sufficiently regular Dirichlet boundary data on $S_R:= \partial B_R$ there exists a unique
complex solution of $\Delta u + k^2 u=0$ in $\R^N \setminus
B_R$ satisfying \eqref{eqn:radiation1}. Furthermore, the corresponding
Dirichlet to Neumann map $T_R$ on $S_R$, also called the {\em capacity
  operator} (see \cite{nedelec}), is well understood and can be
computed explicitly in terms of spherical harmonics, see
Section~\ref{sec-prop-capac-oper} below. This operator
assigns to a given 
boundary datum on $S_R$ the normal derivative of the corresponding
unique solution $\Delta u + k^2 u=0$ in $\R^N \setminus
B_R$ satisfying \eqref{eqn:radiation1}. However, condition
(\ref{eqn:radiation1}) rules out (real-valued) standing wave solutions
$u$ which are the subject of the present paper. Nevertheless, a
standing wave solution $u$ can ---
as explained in \cite[pp.~328--329]{sommerfeld12} --- be realized
as a superposition of an outgoing
wave and an incoming wave having opposite frequency and thus as real
part of a function satisfying (\ref{eqn:radiation1}). In particular, it still
satisfies the weaker asymptotic condition (\ref{eqn:radiation2}). Technically,
this amounts to working with the real part $K_R$ of
the operator $T_R$ restricted to a
Sobolev space of real functions on $S_R$, see
Section~\ref{sec-some-tools} below. With the help of the operator
$K_R$, we will be able reduce the problem of finding real solutions of 
\eqref{eq:33} satisfying \eqref{eqn:radiation2} to a variational
problem in $H^1(B_R)$. While such a reduction
 clearly requires that $f$ vanishes outside
of $B_R$, we shall also see that the vanishing of $f$
leads to difficulties in the proof
of Cerami's condition which is needed to show the existence of
critical points of the corresponding functional. 

To state our results, we need to introduce further notation and
to state our assumptions. Let $2^*$ denote the
critical Sobolev exponent, i.e. $2^*:=2N/(N-2)$ if $N\ge 3$ and
$2^*:=\infty$ if $N=1,2$. For our main result, we shall suppose that the nonlinearity $f: \R^N \times \R \to \R$ is
continuous, and that there exists a bounded set 
$\Omega\subset \R^N$ of positive measure such that the following holds: 
\begin{itemize}
\item[$(f_0)$] $f(x,u)=0$ for $x \in \R^N \setminus \Omega$, $u \in \R$.
\item[$(f_1)$] There exists $a>0$, $p\in(2,2^*)$ such that $|f(x,u)| \le a(1+|u|^{p-1})$
for every $x \in \R^N$, $u \in \R$.
\item[$(f_2)$] $f(x,u)=o(|u|)$ uniformly in $x$ as $u \to 0$.
\item[$(f_3)$] $F(x,u) \ge 0$ for every $x \in \R^N$, $u \in \R$, and $F(x,u)/u^2 \to
  \infty$ as $|u| \to\infty$ for every $x \in \Omega$. 
\item[$(f_4)$] There exists $s_0>0$ such that for every $x \in \Omega$
  we have $f(x,-s_0) \le 0\le f(x,s_0)$, and the map $u \mapsto f(x,u)/|u|$ is
  nondecreasing on $(s_0,\infty)$ and on $(-\infty,-s_0)$. 
\end{itemize}

Here we set $F(x,u)=\int_0^u f(x,s)\,ds$ for $x \in \R^N$, $u \in \R$. 
We point out that nonlinearities of the type 
$$
f(x,u)=q(x)|u|^{p-2}u\qquad \text{or}\qquad 
f(x,u)= q(x)u\log(1+|u|^s), \quad s>0,
$$ 
satisfy these assumptions if $q$ is
continuous and $q>0$ on $\Omega$, $q \equiv 0$ on $\R^N \setminus
\Omega$. Moreover, if $f$ satisfies $(f_0)$--$(f_4)$ and if $g:\R^N
\times \R \to [0,\infty)$ is continuous, vanishes outside of a 
bounded subset of $\Omega\times[0,\infty)$ and satisfies $(f_2)$, 
then the sum $f+g$ also satisfies $(f_0)$--$(f_4)$.

We need one more definition related to the
asymptotics of the solutions we consider. For $R>0$, we
let $\cR_R$ denote the set of all functions $u \in H^1_{loc}(\R^N,\R)$
such that $u= \Re (w)$ in $\R^N \setminus B_R$ for a function $w \in 
\cC(\R^N \setminus B_R,\C) \cap \cC^\infty(\R^N \setminus
\overline{B_R},\C)$ satisfying\\[0.2cm]
(i) $w \equiv u$ on $S_R$,\\
(ii) $\Delta w + k^2 w = 0$ in $\R^N \setminus \overline{B_R}$,\\
(iii) $w(x)= O(r^{\frac{1-N}{2}})$ and $\bigl|\frac{\partial w}{\partial
    r}(x)- ik w(x)\bigr|=o(r^{\frac{1-N}{2}})$ as $r=|x|\to\infty.$\\[0.2cm]
Note that every function $u \in \cR_R$ satisfies the asymptotic
conditions (\ref{eqn:radiation2}). Our main result is the following. 

\begin{theorem}\label{thm:1}
There exists a countable set $\cD \subset (0,\infty)$
such that for $R \in (0,\infty) \setminus \cD$ with $\Omega \subset
B_R$ and every nonlinearity $f$ satisfying assumptions $(f_0)$ --
$(f_4)$ we have:\\ 
(i) Equation \eqref{eq:33} admits
a solution in $\cR_R$.\\
(ii) If, in addition, $f(x,-t)=-f(x,t)$ for all $(x,t)\in\R^N\times\R$, then
equation \eqref{eq:33} admits a sequence of
pairs of solutions $\{\pm u_n\}$, $n \in \N$ in $\cR_R$ with the property that
\begin{equation}
  \label{eq:28}
\|u_n\|_{H^1(B_R)} \to \infty \qquad \text{as $n \to \infty$.}  
\end{equation}
\end{theorem}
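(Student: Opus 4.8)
The plan is to recast \eqref{eq:33} as a variational problem on the ball $B_R$ by means of the capacity operator, and to obtain solutions as critical points of the resulting energy functional via min--max methods.

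\emph{Reduction and functional.} Since $f$ is supported in $\Omega\subset B_R$, a function $u\in\cR_R$ solves \eqref{eq:33} if and only if its restriction to $B_R$ belongs to $H^1(B_R,\R)$ and is a weak solution of
\begin{equation*}
-\Delta u-k^2u=f(x,u)\ \text{in}\ B_R,\qquad \frac{\ptl u}{\ptl\nu}=K_R\bigl(u|_{S_R}\bigr)\ \text{on}\ S_R,
\end{equation*}
where $K_R=\Re\,T_R$ is the real part of the exterior Dirichlet--to--Neumann operator and $\nu$ the outward normal; the exterior part of $u$ is uniquely recovered from $u|_{S_R}$ by the Rellich uniqueness theorem recalled in the introduction, and an elliptic bootstrap based on $(f_1)$ shows that such $u$ is a classical solution, so that it indeed lies in $\cR_R$. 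Weak solutions of this boundary value problem are exactly the critical points on $H^1(B_R,\R)$ of
\begin{equation*}
J(u)=\tfrac12\int_{B_R}\!\bigl(|\nabla u|^2-k^2u^2\bigr)\,dx-\tfrac12\int_{S_R}(K_Ru)\,u\,d\sigma-\int_{B_R}\!F(x,u)\,dx,
\end{equation*}
using that $K_R$ is bounded and symmetric from $H^{1/2}(S_R)$ to $H^{-1/2}(S_R)$ (Section~\ref{sec-some-tools}), and that $(f_1)$--$(f_2)$ together with the compact embeddings $H^1(B_R)\embed L^q(B_R)$ ($q<2^*$) make the last term a $C^1$ functional with compact gradient.

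\emph{Spectral geometry.} Let $A_R$ be the self--adjoint operator on $L^2(B_R)$ associated with the form $(u,v)\mapsto\int_{B_R}\nabla u\cdot\nabla v\,dx-\int_{S_R}(K_Ru)\,v\,d\sigma$ on $H^1(B_R)$. A G{\aa}rding inequality shows that $A_R$ is bounded below with compact resolvent, hence has discrete spectrum, and the quadratic part $Q$ of $J$ equals $u\mapsto\langle(A_R-k^2)u,u\rangle$, so it is negative on only a finite--dimensional subspace. Setting $\cD:=\{R>0:\ k^2\in\sigma(A_R)\}$ and exhibiting (after rescaling to $B_1$) $\{A_R\}$ as a real--analytic operator family whose eigenvalue branches cannot be identically equal to the level $k^2$, one obtains that $\cD$ is countable. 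For $R\in(0,\infty)\setminus\cD$ with $\Omega\subset B_R$, the operator $L_R:H^1(B_R)\to H^1(B_R)^*$ induced by $Q$ is an isomorphism, and one has an orthogonal splitting $H^1(B_R)=V^-\oplus V^+$ with $\dim V^-<\infty$, $Q\le-c\norm{\cdot}^2$ on $V^-$ and $Q\ge c\norm{\cdot}^2$ on $V^+$ for some $c>0$. Finally, since the eigenfunctions of $A_R$ are real--analytic in $B_R$, no nonzero finite linear combination of them vanishes on $\Omega$; in particular one may fix an eigenfunction $e\in V^+$ not vanishing on $\Omega$.

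\emph{Min--max schemes.} For (i) I would apply the linking theorem. By $(f_1)$--$(f_2)$ one has $J\ge\alpha>0$ on the sphere $\{u\in V^+:\norm u=\rho\}$ for $\rho>0$ small; by $(f_3)$, using $F\ge0$, $Q\le0$ on $V^-$, and the fact that $|v+se|\to\infty$ on a set of positive measure in $\Omega$ as $s\to\infty$ (so that $\int_{B_R}F(x,v+se)\,dx/s^2\to\infty$, uniformly for $v$ in bounded subsets of $V^-$), one has $J\le0$ on the boundary of the half--cylinder $\{v+se:v\in V^-,\ \norm v\le r_0,\ 0\le s\le r_1\}$ once $r_0,r_1$ are large. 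This sphere and this boundary link, so, granting the compactness condition below, the linking theorem produces a critical value $\ge\alpha$ and hence a nontrivial solution in $\cR_R$. For (ii), oddness of $f$ makes $J$ even with $J(0)=0$; writing $V^+=\overline{\bigoplus_{j\ge1}X_j}$ with each $X_j$ finite--dimensional and spanned by eigenfunctions of $A_R$, set $Y_k=V^-\oplus\bigoplus_{j\le k}X_j$ and $Z_k=\overline{\bigoplus_{j\ge k}X_j}$. Then $(f_3)$ yields $\max\{J(u):u\in Y_k,\ \norm u=\rho_k\}\le0$ for $\rho_k$ large, while $(f_1)$--$(f_2)$ together with $\beta_k:=\sup\{\norm{u}_{L^p}:u\in Z_k,\ \norm u=1\}\to0$ yield $\inf\{J(u):u\in Z_k,\ \norm u=r_k\}\to\infty$ for suitable $r_k\to\infty$; the fountain theorem then produces critical pairs $\pm u_n$ with $J(u_n)\to\infty$. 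Since $F\ge0$ gives $J(u_n)\le\tfrac12Q(u_n)\le C\norm{u_n}_{H^1(B_R)}^2$, this forces $\norm{u_n}_{H^1(B_R)}\to\infty$, which is \eqref{eq:28}.

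\emph{The main difficulty.} Both schemes need a compactness condition, and here the appropriate one is Cerami's condition (C); its verification --- more precisely, showing that (C)-sequences are bounded --- is the heart of the matter, and it is exactly here that the vanishing of $f$ outside $\Omega$ causes trouble. For a (C)-sequence $(u_n)$ one has $J(u_n)\to c$ and $(1+\norm{u_n})\norm{J'(u_n)}\to0$, whence
\begin{equation*}
\int_{B_R}\bigl(\tfrac12f(x,u_n)u_n-F(x,u_n)\bigr)\,dx=J(u_n)-\tfrac12\langle J'(u_n),u_n\rangle\longrightarrow c.
\end{equation*}
Setting $v_n=u_n/\norm{u_n}$ and passing to a weak limit $v$, a contradiction with $(f_3)$ follows at once if $v$ does not vanish on $\Omega$; but when $v|_\Omega=0$ the rescaled nonlinearity becomes negligible, and one must bring in the Nehari--type monotonicity $(f_4)$ together with a careful decomposition of $u_n$ into a part active on $\Omega$ and an essentially $(-\Delta-k^2)$-harmonic remainder, the latter controlled by the non-degeneracy of $Q$ for $R\notin\cD$. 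Once boundedness of (C)-sequences is established, compactness is routine: along a subsequence $u_n\rightharpoonup u$ in $H^1(B_R)$; writing $J'(w)=L_Rw-N(w)$, where $N$ is the (compact) gradient of $w\mapsto\int_{B_R}F(x,w)\,dx$, one has $L_Ru_n=J'(u_n)+N(u_n)\to N(u)$ strongly (since $\norm{u_n}$ is now bounded, $J'(u_n)\to0$), hence $u_n=L_R^{-1}(L_Ru_n)\to u$ in $H^1(B_R)$ with $J'(u)=0$, as required.
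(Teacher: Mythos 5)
Your overall architecture matches the paper's: reduce to a boundary value problem on $B_R$ with the Robin-type condition $\partial_\nu u = K_R u$, work with the functional $J=\Phi$ on $H^1(B_R)$, split into $V^-\oplus V^+$ using the spectrum of the associated self-adjoint operator, exclude the countable set $\cD$ where $k^2$ is an eigenvalue, and run a linking argument for (i) and a Fountain argument for (ii) under the Cerami condition. That part is sound, modulo two minor remarks: you should make explicit that you invoke linking/Fountain theorems in their Cerami form (the paper points to the deformation lemma of Bartolo--Benci--Fortunato), and your abstract analytic-perturbation argument for the countability of $\cD$ (eigenvalue branches of a real-analytic family not being identically $k^2$) would need to be justified; the paper instead exhibits $\cD$ explicitly through zeros of Bessel-function expressions and derives countability from their real-analyticity.

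The genuine gap is the verification of the Cerami condition, which you yourself flag as the heart of the matter but do not actually carry out. Your closing sketch --- "a careful decomposition of $u_n$ into a part active on $\Omega$ and an essentially $(-\Delta-k^2)$-harmonic remainder, the latter controlled by the non-degeneracy of $Q$" --- does not describe a workable argument, and is in fact not what is needed. The difficulty is not a spatial decomposition of $u_n$ but a comparison-of-energies argument along the Cerami sequence. Concretely, after normalizing $w_n=u_n/\|u_n\|$ and extracting $w_n\rightharpoonup w$, the troublesome case is $w|_\Omega\equiv 0$; there the potential term $\varphi(tw_n)\to 0$ for every $t>0$, so no information is gained directly. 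The paper's resolution is: (a) split $f=f_1+f_2$ with $f_1$ globally monotone in the sense of $(f_4)$ on all of $\R\setminus\{0\}$ and $f_2$ bounded with bounded primitive; (b) from this derive, via a Szulkin--Weth type inequality, a \emph{relative energy estimate} $\Phi(r_n u_n+v_n)\le \Phi(u_n)+\tilde C+o(1)$ valid for any Cerami sequence $(u_n)$, any bounded $r_n\ge 0$ and any bounded $v_n\in X^-$; (c) in the case $w|_\Omega=0$, produce (distinguishing the subcases $\|w^-\|<s$ and $\|w^-\|=s$, where $s=\lim\|w_n^+\|$) a scalar $t>0$ and competitors $v_n\in X^-$ with $\|v_n\|\le 1$ such that $\Phi(tw_n+v_n)>c+\tilde C+1$ for large $n$, which contradicts (b) because $tw_n+v_n=(t/\|u_n\|)u_n+v_n$ and $t/\|u_n\|\to 0$. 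Only then does the standard Fatou argument in the case $w|_\Omega\not\equiv 0$ close the boundedness proof. None of this is present in your proposal, and without it the linking and Fountain arguments do not go through. Filling in step (b) and step (c) is where $(f_4)$, the finite-dimensionality of $X^-$, and the nondegeneracy assumption $X^0=\{0\}$ are actually used, and it is by far the longest part of the paper's proof.
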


Some remarks seem in order. Elliptic regularity theory implies that 
the solutions given by Theorem~\ref{thm:1} 
belong to $W^{2,q}_{loc}(\R^N)$ for all $1\leq q<\infty$, so there
are strong solutions of \eqref{eq:33} in 
$\cC^{1,\alpha}_{\text{loc}}(\R^N)$ for all $0<\alpha<1$. The set
$\cD$ in Theorem~\ref{thm:1} is defined by the property that an
associated $R$-dependent linear eigenvalue problem admits the eigenvalue
zero iff $R \in \cD$, see Lemma~\ref{sec:some-tools-2}(iv) below for details. The
restriction $R \in (0,\infty)\setminus \cD$ in Theorem~\ref{thm:1} is
not necessary, but the proof is more complicated in the case where
$R \in \cD$. Since the choice of $R$ has no impact on the validity of
the asymptotic conditions (\ref{eqn:radiation2}), we decided not to
consider the case $R\in \cD$ in the present paper. Nevertheless, it is natural to ask whether different choices of $R$ in
Theorem~\ref{thm:1} give rise to different solutions of
problem (\ref{eq:33}),~(\ref{eqn:radiation2}). The following partial answer to this question can be derived from a
careful study of the explicit representation of the
capacity operator (and its real part) discussed in
Section~\ref{sec-prop-capac-oper} below. 

\begin{theorem}
\label{thm:1-1}
For fixed $R>0$ and $u \in \cR_R$ with $u|_{S_R} \not \equiv 0$, there exist at most
countably many $R'>R$ such that $u \in \cR_{R'}$.\\
As a consequence, the problem \eqref{eq:33},~(\ref{eqn:radiation2}) admits infinitely
many solutions if the nonlinearity $f$ satisfies assumptions $(f_0)$ --
$(f_4)$.
\end{theorem}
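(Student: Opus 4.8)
The plan is to translate membership of $u$ in $\cR_{R'}$ into a vanishing condition on the spherical harmonic components of the outgoing field carried by $u$, and then to invoke that a nontrivial solution of the governing radial ODE (second order, with analytic coefficients) has only isolated zeros. So fix $R>0$ and $u\in\cR_R$ with $u|_{S_R}\not\equiv0$, and let $w$ be the function attached to $u$ in the definition of $\cR_R$. By the explicit description recalled in Section~\ref{sec-prop-capac-oper}, every $v$ solving $\Delta v+k^2v=0$ in $\R^N\setminus\overline{B_\rho}$ and satisfying the Sommerfeld condition admits a unique expansion $v(r\omega)=\sum_{l,m}c_{l,m}\,r^{-\frac{N-2}{2}}H^{(1)}_{\nu_l}(kr)\,Y_{l,m}(\omega)$ valid for $r>\rho$, with $\{Y_{l,m}\}$ a real orthonormal basis of spherical harmonics on $S^{N-1}$, $\nu_l=l+\tfrac{N-2}{2}$, and $H^{(1)}_\nu=J_\nu+iY_\nu$ the Hankel function of the first kind, $J_\nu$ and $Y_\nu$ being the Bessel functions of the first and second kind; write $(c_{l,m})$ for the coefficients of $w$. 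The central step would be the equivalence, valid for $R'>R$: $u\in\cR_{R'}$ if and only if $\Im\bigl(c_{l,m}H^{(1)}_{\nu_l}(kR')\bigr)=0$ for all $l,m$. For the forward implication one takes the field $\tilde w$ attached to $u$ for $\cR_{R'}$ and expands it likewise with coefficients $(\tilde c_{l,m})$; since $\Re\tilde w=u=\Re w$ on $\R^N\setminus B_{R'}$, projecting onto each $Y_{l,m}$ gives $\Re\bigl((\tilde c_{l,m}-c_{l,m})H^{(1)}_{\nu_l}(kr)\bigr)=0$ for all $r>R'$, hence $\tilde c_{l,m}=c_{l,m}$ because $J_{\nu_l}$ and $Y_{\nu_l}$ are linearly independent; thus $\tilde w=w$ for $r>R'$, so by continuity $w|_{S_{R'}}=\tilde w|_{S_{R'}}=u|_{S_{R'}}$ is real valued, which is the stated condition. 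Conversely, that condition forces $\Im w\equiv0$ on $S_{R'}$, so $\tilde w:=w|_{\R^N\setminus B_{R'}}$ satisfies conditions (i)--(iii) and witnesses $u\in\cR_{R'}$.

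To finish the first assertion, observe that $w|_{S_R}=u|_{S_R}\not\equiv0$ forces some coefficient $c_{l_0,m_0}=\varrho e^{i\theta}$ with $\varrho>0$. Then the function $r\mapsto\Im\bigl(c_{l_0,m_0}\,r^{-\frac{N-2}{2}}H^{(1)}_{\nu_{l_0}}(kr)\bigr)$ equals $\varrho\,r^{-\frac{N-2}{2}}\bigl(\sin\theta\,J_{\nu_{l_0}}(kr)+\cos\theta\,Y_{\nu_{l_0}}(kr)\bigr)$, a nontrivial solution on $(0,\infty)$ of the real linear ODE with analytic coefficients governing the degree $l_0$ radial part (nontrivial since $J_{\nu_{l_0}}$ and $Y_{\nu_{l_0}}$ are linearly independent), hence real analytic with discrete zero set. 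By the equivalence above, $\{R'>R:\,u\in\cR_{R'}\}$ is contained in this countable zero set, which proves the first statement. (For $N=1$ the same argument applies, with $\cos$ and $\sin$ in place of $J_{\nu_l}$ and $Y_{\nu_l}$.)

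For the statement on infinitely many solutions I would set $R_0:=\sup_{x\in\overline\Omega}|x|$ and $\cR^*:=(R_0,\infty)\setminus\cD$, which is uncountable since $\cD$ is countable. For each $R\in\cR^*$ we have $\Omega\subset B_R$ and $R\notin\cD$, so Theorem~\ref{thm:1}(i) provides a nontrivial solution $v_R\in\cR_R$ of \eqref{eq:33}, which then also satisfies \eqref{eqn:radiation2}. The key observation is that for every nontrivial solution $u$ of \eqref{eq:33},~\eqref{eqn:radiation2} the set $I(u):=\{\rho>0:\,u\in\cR_\rho\}$ is at most countable. Indeed, if $\rho\in I(u)$ had $u|_{S_\rho}\equiv0$, then the field attached to $u$ would vanish (apply its expansion at $r=\rho$ and use $H^{(1)}_{\nu_l}(k\rho)\neq0$), so $u\equiv0$ on $\R^N\setminus B_\rho$; writing $f(x,u(x))=V(x)u(x)$ with $V(x):=f(x,u(x))/u(x)$ where $u(x)\neq0$ and $V:=0$ elsewhere, assumptions $(f_0)$--$(f_2)$ together with $u\in L^\infty(\R^N)$ give $V\in L^\infty(\R^N)$, whence $u\equiv0$ by weak unique continuation for $-\Delta$ with a bounded potential, contradicting nontriviality. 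So $u|_{S_\rho}\not\equiv0$ for every $\rho\in I(u)$; and were $I(u)$ uncountable, it would contain two of its own condensation points $\rho_1<\rho_2$, and applying the first assertion at $\rho_1$ would make $I(u)\cap(\rho_1,\infty)$ countable, contradicting that $\rho_2$ is a condensation point of $I(u)$. Finally, if \eqref{eq:33},~\eqref{eqn:radiation2} had only finitely many solutions $u_1,\dots,u_m$, then $\cR^*\subseteq\bigcup_{j=1}^{m}I(u_j)$ would be countable, which is absurd; hence the problem has infinitely many solutions.

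The step I expect to be the main obstacle is the equivalence in the first paragraph: it requires comparing the a priori unrelated outgoing expansions of $w$ and of $\tilde w$ and extracting $\tilde w=w$ from the uniqueness of such expansions and the linear independence of $J_\nu$ and $Y_\nu$. Once that reduction is secured, the analyticity of the radial solutions and the short unique-continuation argument are routine.
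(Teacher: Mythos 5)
Your proof is correct, and it takes a genuinely different route on both halves. For the first assertion, the paper shows that $w_1-w_2$ (the fields attached to $u$ for the two radii) satisfies the radiation condition with vanishing real part and then invokes Rellich's theorem \cite[Satz 1]{rellich43} to conclude $w_1=w_2$ in $E_{R'}$; you instead reach the same conclusion $\tilde w = w$ by matching Fourier--Bessel expansions mode by mode and using linear independence of $J_{\nu_\ell}$ and $Y_{\nu_\ell}$. This is more elementary and self-contained (it effectively reproves the relevant special case of Rellich), and from that point on the two arguments coincide: realness of $w|_{S_{R'}}$ translates into the analytic condition $J_{\mu_\ell}(kR')Y_{\mu_\ell}(kR)=J_{\mu_\ell}(kR)Y_{\mu_\ell}(kR')$ for some $\ell$, whose solution set in $R'$ is countable. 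For the multiplicity statement, the paper runs an explicit induction: pick $R_1\notin\cD$, get $u_1\in\cR_{R_1}$, use nontriviality plus unique continuation to see $u_1|_{S_{R_1}}\not\equiv 0$, then choose $R_2>R_1$ in $(0,\infty)\setminus\cD$ outside the countable exceptional set so $u_1\notin\cR_{R_2}$, and iterate. You replace this by a cardinality argument: the uncountable set $\cR^*=(R_0,\infty)\setminus\cD$ of admissible radii would, if there were only finitely many solutions $u_1,\dots,u_m$, be covered by $\bigcup_j I(u_j)$ with each $I(u_j)$ countable. Your proof that each $I(u_j)$ is countable is where you redo the paper's unique continuation step (showing $u|_{S_\rho}\not\equiv 0$ for $\rho\in I(u_j)$) plus a small condensation-point argument. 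Both approaches buy the same thing; yours is arguably cleaner as a one-shot contradiction, while the paper's induction is more constructive and could in principle be used to exhibit the solutions explicitly. One remark: your condensation-point step is slightly more elaborate than necessary --- once you know $u|_{S_\rho}\not\equiv 0$ for every $\rho\in I(u)$, picking any single $\rho_0\in I(u)$ and applying the first assertion already shows $I(u)\cap(\rho_0,\infty)$ is countable, and combined with $I(u)\cap(0,\rho_0]$ being handled by choosing $\rho_0=\inf I(u)+\varepsilon$ (or just iterating downward) one gets countability directly; but your version is correct as stated.
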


It might be somewhat surprising at first glance that Theorem~\ref{thm:1-1} also gives rise to
infinitely many solutions $u_n,\:n \in \N$ of \eqref{eq:33},~\eqref{eqn:radiation2}
in the case where $f$ is {\em not} odd in its second variable. However, a local 
unboundedness property as in \eqref{eq:28} should
not be expected for these solutions.  
It is natural to ask whether it is possible to relax the condition
that $f$ vanishes outside a compact set. We have no general answer to this
question yet, but in the case where
$f$ is radial in $x$, a shooting argument yields radial solutions of
\eqref{eq:33},~\eqref{eqn:radiation2} under much less restrictive
assumptions on $f$. For a precise result, see
Theorem~\ref{thm:radial} below. We do not even need to assume that
$f$ tends to $0$ as $r=|x| \to \infty$. 

The paper is organized as follows. In Section~\ref{sec-some-tools}, we
set up the variational framework used to prove Theorem~\ref{thm:1}. 
Here we also state key properties of the capacity operator $T_R$ and its real
part $K_R$ in the case $N \ge 2$, but we postpone the derivation of these properties to
Section~\ref{sec-prop-capac-oper} since the underlying computations ---
relying on special properties of Hankel functions --- 
are somewhat technical. 
In Section~\ref{sec-thm-1} we then complete the proof of Theorem~\ref{thm:1} in
the case $N \ge 2$. As
already remarked above, a key difficulty in the proof is the validity of Cerami's
condition (see Proposition~\ref{prop:cerami}), and the proof of this
property is rather long. In Section~\ref{sec-thm-1}, we also establish
--- under stronger assumptions on the
nonlinearity --- a rigid minimax principle
with respect to families of half-spaces for the solution which minimizes
the corresponding energy functional among all critical points, see
Theorem~\ref{sec:proof-theor-refthm:1-2}. We believe that this
minimax principle gives rise to additional properties of the
corresponding (ground state) solutions. Section~\ref{sec-remarks} contains a sketch of the proof of
Theorem~\ref{thm:1} in the one-dimensional case. This case is
much easier than the case $N \ge 2$ but has to be treated slightly
differently. Section~\ref{sec-radial} is devoted to the
radial case. As pointed out already, we apply a shooting argument
to prove the existence of infinitely many radial solutions of
\eqref{eq:33},~\eqref{eqn:radiation2} under much less restrictive
assumptions on $f$, see Theorem~\ref{thm:radial}.
Finally, as noted above, in Section~\ref{sec-prop-capac-oper} 
we derive key properties of the capacity operator $T_R$ and its real
part $K_R$. We note that some of these properties are well
known (see e.g. \cite{nedelec} for the case $N=3$), but we 
could not find an appropriate reference for general $N \ge
2$. Moreover, it seems that the operator $K_R$ has not
been studied in the degree of detail which we need for our
purposes. The final part of Section~\ref{sec-prop-capac-oper} is
devoted to the proof of Theorem~\ref{thm:1-1}.

We close this introduction with a remark on some open
questions which we plan to address in future work. First, one may ask for which compactly supported nonlinearities $f$ satisfying
$(f_0)$ there exist solutions of (\ref{eq:33}) given as superposition
of a function of type (\ref{eq:31}) and a function
satisfying (\ref{eqn:radiation2}). Second, one may try to prove the existence of standing
wave solutions without assumption $(f_0)$. This would require a
completely different (variational) approach. Another natural question,
more closely related to the scattering theory of (\ref{nlw0}), is
the existence of complex solutions of (\ref{eq:33}) satisfying the radiation
condition (\ref{eqn:radiation1}). It is not clear whether solutions can be
expected in the case where $f(\cdot,0)\equiv 0$ as assumed in $(f_2)$. This problem is non-variational due to the lack of symmetry
of the Dirichlet to Neumann map $T_R$ associated with
(\ref{eqn:radiation1}), so a completely different approach has to be used.

\section{The variational framework}\label{sec-some-tools}
We assume that $N \ge 2$ in the next
two sections, referring to Section~\ref{sec-remarks} for the case
$N=1$. In this section we will introduce the capacity operator
and  develop a variational framework for problem~\eqref{eq:33},~\eqref{eqn:radiation2}. 
We start by fixing some notation.  Let $R>0$ be such that $\Omega \subset B_R$, where
$B_R:=B_R(0)$ is the open ball with radius $R$ centered at
zero. We also set $E_R:= \R^N \setminus \overline{B_R}$ and consider the space  
\begin{align}
H_R^{\C}:= \Bigl\{u \in H^1_{loc}(E_R,\C)\::\: &\frac{u}{(1+r^2)^{1/2}} \in
  L^2(E_R,\C),\: \frac{\nabla u}{(1+r^2)^{1/2}} \in
  L^2(E_R,\C^N), \nonumber\\
&\frac{\partial u}{\partial r}  -i k u \in
  L^2(E_R,\C)  \label{eq:1}
\Bigr\},
\end{align}
where --- here and in the following --- $r$ always denotes the radial
variable, i.e., $r=|x|$. It is known (see \cite{nedelec} for the case
$N=3$ and Section~\ref{sec-prop-capac-oper} below for general $N \ge 2$) 
that for every $u \in H^{\frac12}(S_R,\C)$, $S_R:=\partial B_R$, there
exists a unique weak solution $w \in H_R^\C$ of the problem
\begin{equation}
  \label{eq:2}
\left\{
  \begin{aligned}
\Delta w + k^2 w &= 0 &&\qquad \text{in $E_R$,}\\
w&=u &&\qquad \text{on $S_R$}.  
  \end{aligned}
\right.
\end{equation}
Here weak solution means that $\trace(w)=u$ and 
$$
\int_{E_R} \Bigl(\nabla w\cdot \nabla \phi-k^2 w \phi\Bigr) \,dx=0 \qquad
\text{for all $\phi \in \cC^1_c(E_R)$.}
$$
where $\cC^1_c(E_R)$ denotes the space of $\cC^1$-functions with
compact support in $E_R$. We then define the {\em capacity operator} 
(or {\em Dirichlet to Neumann map}) 
\begin{equation}
  \label{eq:17}
T_R: H^{\frac12}(S_R,\C) \to H^{-\frac12}(S_R,\C), \qquad T_R u= \frac{\partial w}{\partial \eta} \in H^{-\frac12}(S_R,\C)
\end{equation}
where $w \in H_R^\C$ is the unique solution of \eqref{eq:2}
corresponding to $u \in H^{\frac12}(S_R,\C)$ and $\eta$ denotes the normal unit vector field on $S_R$ pointing outside 
$B_R$ and inside $E_R$.\\
As shown in \cite{nedelec} for the case $N=3$ and detailed in
Section~\ref{sec-prop-capac-oper} below
for general $N \ge 2$, the operator $T_R:H^{\frac12}(S_R,\C) \to
H^{-\frac12}(S_R,\C)$ is continuous, and suitably normalized spherical harmonics (when
considered as functions of spherical angles) form an 
orthonormal basis of eigenfunctions of
$T_R$. The operator $T_R$ is not symmetric and therefore does not give
rise to a variational formulation of our nonlinear problem given by
\eqref{eq:33} and \eqref{eqn:radiation2}. Therefore, we set 
$H^{\frac12}(S_R)=H^{\frac12}(S_R,\R)$,
$H^{-\frac12}(S_R)=H^{-\frac12}(S_R,\R)$, and we let
\begin{equation}
  \label{eq:27}
K_R: H^{\frac12}(S_R) \to H^{-\frac12}(S_R),\qquad K_R u = \Re [T_R u]
\end{equation}
denote the real part of the restriction of $T_R$ to
$H^{\frac12}(S_R)$. This operator can be seen as the Dirichlet to
Neumann map corresponding to the problem \eqref{eq:2} with real data
$u$ on $S_R$ and solutions $w$ given as a real part of a function 
in $H_R^{\C}$. The explicit calculations of $w$ in terms of $u$ in
Section~\ref{sec-prop-capac-oper} below immediately imply that $w$ satisfies the asymptotic
conditions \eqref{eqn:radiation2}. Moreover, the operator $K_R$ has
the following properties.

\begin{lemma}
\label{sec:capacity-operator}
The operator $K_R$ is bounded, symmetric and negative definite. More
precisely, there are constants $\gamma_R>0$ and $\Gamma_R>0$ such that
\begin{equation}
  \label{eq:5}
\|K_R u\|_{H^{-\frac{1}{2}}(S_R)} \le \Gamma_R
\|u\|_{H^{\frac{1}{2}}(S_R)}\quad \text{and}\quad  
\int_{S_R} u K_R u  \,d \sigma \le - \gamma_R \int_{S_R}u^2 \,d\sigma
\end{equation}
for all $u \in H^{\frac12}(S_R)$. Moreover,
\begin{equation}
  \label{eq:16}
\int_{S_R} v K_R u  \,d \sigma= \int_{S_R} u K_R  v  \,d \sigma
\qquad \text{for $u,v \in H^{\frac12}(S_R)$.}
\end{equation}
\end{lemma}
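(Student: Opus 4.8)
The plan is to read off all three assertions from the explicit diagonalization of the capacity operator carried out in Section~\ref{sec-prop-capac-oper}. There we will show that, in spherical coordinates on $S_R$, the normalized real spherical harmonics $\{Y_{\ell,m}:\ell\ge0,\ 1\le m\le d_\ell\}$ form an orthonormal basis of $L^2(S_R)$ --- orthogonal also in every $H^s(S_R)$ --- and that $T_R Y_{\ell,m}=\mu_\ell Y_{\ell,m}$, where $\mu_\ell\in\C$ depends only on the degree $\ell$ and is given explicitly through the Hankel function $H^{(1)}_{\nu_\ell}$, $\nu_\ell:=\ell+\frac{N-2}{2}$. Hence $K_R=\Re T_R$ is diagonal in the same basis with the real eigenvalues $\Re\mu_\ell$, and \eqref{eq:16} is immediate: for real $u=\sum_{\ell,m}c_{\ell,m}Y_{\ell,m}$, $v=\sum_{\ell,m}d_{\ell,m}Y_{\ell,m}$ one gets $\int_{S_R}vK_Ru\,d\sigma=\sum_{\ell,m}(\Re\mu_\ell)c_{\ell,m}d_{\ell,m}$, which is symmetric in $u,v$. (Alternatively, \eqref{eq:16} follows directly from Green's second identity on the annuli $B_\rho\setminus\overline{B_R}$ by letting $\rho\to\infty$ and using the outgoing radiation condition in (iii) of the definition of $\cR_R$ to discard the boundary term over $\partial B_\rho$.) The first (boundedness) estimate in \eqref{eq:5} then comes essentially for free from the continuity of $T_R\colon H^{\frac12}(S_R,\C)\to H^{-\frac12}(S_R,\C)$ recalled above, together with the fact that taking real parts does not increase the $H^{-\frac12}$--norm; so one may take $\Gamma_R=\|T_R\|$.

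The real work is the negative definiteness, i.e. the second estimate in \eqref{eq:5}. Expanding in spherical harmonics it becomes the claim $\gamma_R:=-\sup_{\ell\ge0}\Re\mu_\ell>0$. Using the explicit formula for $\mu_\ell$ --- after separation of variables, $\mu_\ell=k\,f_{\nu_\ell}'(kR)/f_{\nu_\ell}(kR)$ with $f_\nu(z)=z^{(2-N)/2}H^{(1)}_\nu(z)$ --- and the identities $\overline{H^{(1)}_\nu(z)}=H^{(2)}_\nu(z)$, $|H^{(1)}_\nu(z)|^2=J_\nu(z)^2+Y_\nu(z)^2=:M_\nu(z)$ for real $z>0$, one computes
\[
\Re\mu_\ell=\frac{2-N}{2R}+\frac{k}{2}\cdot\frac{M_{\nu_\ell}'(kR)}{M_{\nu_\ell}(kR)} .
\]
So everything reduces to two properties of the classical function $M_\nu$, both of which I would establish in Section~\ref{sec-prop-capac-oper}: (a) for every $\nu\ge0$, $M_\nu$ is strictly decreasing on $(0,\infty)$ --- which follows from Nicholson's integral $M_\nu(z)=\frac{8}{\pi^2}\int_0^\infty K_0(2z\sinh t)\cosh(2\nu t)\,dt$ by differentiating under the integral, since $K_0>0$ is strictly decreasing; and (b) the large-order behaviour $M_\nu(z)\to\infty$ with $M_\nu'(z)/M_\nu(z)\to-2\nu/z$ as $\nu\to\infty$ for fixed $z>0$, coming from $J_\nu(z)\sim(z/2)^\nu/\Gamma(\nu+1)$ and $Y_\nu(z)\sim-\Gamma(\nu)(z/2)^{-\nu}/\pi$. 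Property (a) yields $\Re\mu_\ell<0$ for every $\ell$ (for $N\ge3$ the summand $\frac{2-N}{2R}$ is itself negative, for $N=2$ it drops out), and property (b) yields $\Re\mu_\ell\to-\infty$, indeed $\Re\mu_\ell\sim-\ell/R$; hence all but finitely many $\Re\mu_\ell$ lie below $-1$ and the remaining ones are each strictly negative, so $\gamma_R>0$.

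I expect the main obstacle to be exactly the monotonicity statement (a) over the full range $\nu\ge0$ (including small $\nu$, e.g. $\nu=0$ when $N=2$, $\ell=0$) and, more broadly, squeezing a \emph{uniform-in-$\ell$} negative bound out of the Hankel function estimates; the other technical chunk is pinning down the eigenvalue formula for $T_R$ in general dimension $N\ge2$, with the correct normalization of the ``outgoing'' Hankel function and the right sign convention for the normal $\eta$. Granting the computations deferred to Section~\ref{sec-prop-capac-oper}, the lemma is then just bookkeeping with the spectral decomposition. (As a by-product, the Wronskian relation $H^{(1)}_\nu(z)(H^{(2)}_\nu)'(z)-(H^{(1)}_\nu)'(z)H^{(2)}_\nu(z)=-4i/(\pi z)$ gives $\Im\mu_\ell=2/(\pi R\,M_{\nu_\ell}(kR))\to0^+$, so combined with (b) one obtains $|\mu_\ell|\le C_R(1+\ell)$ and thus a self-contained proof of the first estimate in \eqref{eq:5} via the equivalence of $\|v\|_{H^s(S_R)}$ with the weighted norm $\big(\sum_{\ell,m}(1+\ell)^{2s}\langle v,Y_{\ell,m}\rangle^2\big)^{1/2}$, should one prefer not to quote the continuity of $T_R$.)
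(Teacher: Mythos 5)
Your proposal is correct and follows the same strategy as the paper: expand in spherical harmonics, diagonalize $T_R$ with Hankel-quotient eigenvalues, and read off boundedness, bilinear symmetry, and negative definiteness of $K_R$ from the eigenvalue estimates. (Your alternative proof of \eqref{eq:16} via Green's identity on an annulus, discarding the far boundary term by the outgoing condition, is also sound.)

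The one place where you genuinely diverge from the paper is in how the negative definiteness is extracted from the Hankel functions. You propose to show $\Re\mu_\ell<0$ for each $\ell$ via Nicholson's integral (monotonicity of $M_\nu=J_\nu^2+Y_\nu^2$) and then to combine this with the large-order asymptotic $\Re\mu_\ell\sim-\ell/R$ to get $\sup_\ell\Re\mu_\ell<0$. The paper instead derives the uniform quantitative bound $\tfrac{N-1}{2}\le-\Re z_\ell(r)\le\ell+N-2$ directly: it cites Watson \S13.74 for both $\tfrac{d}{dr}(J_\mu^2+Y_\mu^2)<0$ (your item (a)) and the sharper monotonicity of $r(J_\mu^2+Y_\mu^2)$ for $\mu\ge\tfrac12$, which immediately gives $\Re G_\mu(r)\le-\tfrac12$, and then uses a recurrence in $\mu$ for the upper bound rather than order asymptotics, handling $N=2$, $\ell=0$ (where $\mu=0<\tfrac12$) separately by strict negativity only, exactly as you do. Both routes are valid; the paper's yields an explicit $\gamma_R$, yours establishes its existence a touch more cheaply. (Note that Watson's \S13.74 is itself an application of Nicholson's integral, so you are essentially reproving the ingredients you need rather than quoting them.) Your closing remark reproducing $\Im\mu_\ell$ from the Wronskian and bounding $|\mu_\ell|$ in terms of $1+\ell$ is exactly how the paper verifies the $H^{1/2}\to H^{-1/2}$ continuity.
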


We postpone the proof of this lemma to Section~\ref{sec-prop-capac-oper} below. 
Setting $X:=H^1(B_R)$, we now recall the standard estimate  
\begin{equation}
  \label{eq:30}
\int_{B_R}u^2\,dx \le c \int_{B_R} |\nabla u|^2\,dx
+c \int_{S_R}u^2\,d\sigma  \qquad \text{for $u \in X$}
\end{equation}
with some constant $c=c(R)>0$, see e.g. \cite[Theorem A.9]{struwe}.
Moreover, we consider the bilinear form 
\begin{equation}
  \label{eq:25}
\cB_k: X\times X \to \R,\qquad \cB_k(u,v) = \int_{B_R}
\Bigl(\nabla u\cdot\nabla v-k^2 uv\Bigr)\,dx
-\int_{S_R} v K_R u \,d \sigma.
\end{equation}
From \eqref{eq:5} and (\ref{eq:30}), we easily deduce 
the following

\begin{corollary}
\label{sec:some-tools-1}
$\cB_0$ defines a scalar product on $X=H^1(B_R)$ which 
is equivalent to the standard scalar product, i.e. the
corresponding norms are equivalent. 
\end{corollary}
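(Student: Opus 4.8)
The plan is to check each of the defining properties of an inner product for $\cB_0$ and then the two-sided norm estimate, drawing only on Lemma~\ref{sec:capacity-operator}, the Poincaré-type inequality \eqref{eq:30}, and the boundedness of the trace operator $H^1(B_R)\to H^{\frac12}(S_R)$. Bilinearity of $\cB_0$ is immediate from \eqref{eq:25}. For symmetry, one writes
\[
\cB_0(u,v)-\cB_0(v,u)=\int_{S_R}u\,K_R v\,d\sigma-\int_{S_R}v\,K_R u\,d\sigma,
\]
which vanishes by the symmetry relation \eqref{eq:16}. For positive definiteness, the negative‑definiteness estimate in \eqref{eq:5} gives, for $k=0$,
\[
\cB_0(u,u)=\int_{B_R}|\nabla u|^2\,dx-\int_{S_R}u\,K_R u\,d\sigma\ \ge\ \int_{B_R}|\nabla u|^2\,dx+\gamma_R\int_{S_R}u^2\,d\sigma\ \ge\ 0,
\]
and if $\cB_0(u,u)=0$ then $\nabla u\equiv 0$ on the connected set $B_R$, so $u$ is constant, while $\int_{S_R}u^2\,d\sigma=0$ forces that constant to be zero. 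Hence $\cB_0$ is a scalar product.

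To obtain equivalence of the induced norm $\norm{u}_{\cB_0}:=\cB_0(u,u)^{1/2}$ with the standard $H^1(B_R)$-norm, I would argue as follows. Setting $c_1:=\min\{1,\gamma_R\}$, the lower bound just displayed combined with \eqref{eq:30} yields
\[
\norm{u}_{H^1(B_R)}^2=\int_{B_R}\bigl(u^2+|\nabla u|^2\bigr)\,dx\ \le\ (1+c)\Bigl(\int_{B_R}|\nabla u|^2\,dx+\int_{S_R}u^2\,d\sigma\Bigr)\ \le\ \frac{1+c}{c_1}\,\norm{u}_{\cB_0}^2 .
\]
Conversely, applying the duality pairing bound between $H^{-\frac12}(S_R)$ and $H^{\frac12}(S_R)$, the boundedness estimate in \eqref{eq:5}, and the trace inequality $\norm{u}_{H^{\frac12}(S_R)}\le C_{\mathrm{tr}}\norm{u}_{H^1(B_R)}$ gives
\[
\norm{u}_{\cB_0}^2\ \le\ \int_{B_R}|\nabla u|^2\,dx+\norm{K_R u}_{H^{-\frac12}(S_R)}\,\norm{u}_{H^{\frac12}(S_R)}\ \le\ \bigl(1+\Gamma_R C_{\mathrm{tr}}^2\bigr)\,\norm{u}_{H^1(B_R)}^2 ,
\]
which completes the proof.

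There is no serious obstacle here once Lemma~\ref{sec:capacity-operator} is in hand; the one point that genuinely matters is the \emph{sign} of the boundary term. Because $K_R$ is negative definite, the surface contribution $-\int_{S_R}u\,K_R u\,d\sigma$ is nonnegative and in fact reinforces coercivity rather than competing with the Dirichlet integral — this is exactly what makes $\cB_0$ a bona fide inner product equivalent to the standard one, and it is the reason for working with $K_R=\Re T_R$ rather than with $T_R$ itself (the latter being neither symmetric nor sign-definite). Had the relevant operator been positive on some subspace, one would have had to absorb that contribution against the gradient term via \eqref{eq:30}, which need not be possible in general.
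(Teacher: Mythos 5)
Your proof is correct and follows exactly the route the paper indicates: the paper leaves this corollary without a written proof, saying only that it follows ``from \eqref{eq:5} and \eqref{eq:30}'', and your argument is precisely that deduction, using the symmetry and negative definiteness from Lemma~\ref{sec:capacity-operator} together with the trace and Poincar\'e-type inequalities.
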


We also set $\cB_k(u):= \cB_k(u,u)$ for $u \in X$ in the
following. In the next lemma, we collect key facts concerning
$\cB_k$ and the (nonlocal) eigenvalue problem 
\begin{equation}
  \label{eq:10}
 \left\{ \begin{aligned}
 -\Delta u &= \lambda u,&&\qquad \text{in $B_R$,}\\ 
\frac{\partial u}{\partial \eta} &= K_R u &&\qquad \text{on $S_R$.}      
  \end{aligned}
\right. 
\end{equation}

\begin{lemma}
\label{sec:some-tools-2}
(i) The eigenvalue problem \eqref{eq:10} admits an unbounded sequence
of eigenvalues $0<\lambda_1 \le \lambda_2 \le ...$ and a corresponding
system of eigenfunctions $e_j$, $j \in \N$ which is complete in
$X$. Moreover, each eigenfunction $e_j$, $j \in \N$ is analytic in $B_R$.\\
(ii) There exists a scalar product $\langle \cdot, \cdot \rangle$,
equivalent to the standard scalar product on $X=H^1(B_R)$, and an orthogonal
splitting $X=X^-\oplus X^0 \oplus X^+$ such that 
$$
\cB_k(u)  = \|u^+\|^2 -\|u^-\|^2\quad \text{ for all }u\in X,
$$
where $\|\cdot\|=\langle \cdot, \cdot \rangle^{\frac12}$ is the
induced norm, and $u^\pm, u^0$ denote the corresponding orthogonal projections of $u$ onto $X^\pm$, $X^0$,
respectively. More precisely,
\begin{equation*}
X^- = \spann \{e_j\,: j \le j_*\}, X^0= \spann \{e_j\,:
j_* <j < j^*\} \text{ and }  X^+ = \overline{\spann
  \{e_{j}\,: j \ge j^*\}}, 
\end{equation*}
where $j_* := \max \{j \in \N\::\:
\lambda_j < k^2\}$ and $j^*:= \min \{j \in
\N\::\: \lambda_j>k^2 \}$. In particular, $X^-$ and $X^0$ are finite dimensional.\\
(iii) The family $e_j$, $j \in \N$ is orthogonal with respect to the
scalar products $\langle \cdot,\cdot \rangle$, $\cB_0$ and
the scalar product of $L^2(B_R)$.\\
(iv) There exists a countable set $\cD \subset (0,\infty)$ such that
 $X^0 \not= \{0\}$ if and only if $R  \in \cD$.    
\end{lemma}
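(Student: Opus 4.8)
The plan is to recognize \eqref{eq:10} as the eigenvalue problem for a compact self-adjoint operator on $X$, to read off (i)--(iii) from the spectral theorem together with elliptic regularity, and to treat (iv) separately using the explicit description of $K_R$ obtained in Section~\ref{sec-prop-capac-oper}. For the first three parts I would start from the observation that, after one integration by parts and insertion of the boundary condition, a pair $(\la,u)\in(0,\infty)\times(X\setminus\{0\})$ solves \eqref{eq:10} if and only if $\cB_0(u,v)=\la\int_{B_R}uv\,dx$ for every $v\in X$, with $\cB_0$ the form from \eqref{eq:25}. By Corollary~\ref{sec:some-tools-1} this form is a scalar product on $X$ equivalent to the standard one, and by \eqref{eq:16} it is symmetric. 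Since $X=H^1(B_R)$ embeds compactly into $L^2(B_R)$, the operator $S\in\cL(X)$ determined by $\cB_0(Su,v)=\int_{B_R}uv\,dx$ for all $v\in X$ is compact, self-adjoint and positive on the Hilbert space $(X,\cB_0)$, and injective because $Su=0$ forces $\int_{B_R}u^2\,dx=0$. The spectral theorem then produces eigenpairs $(\mu_j,e_j)$ of $S$ with $\mu_j\searrow 0$, $\mu_j>0$, and $\{e_j\}$ complete in $X$; putting $\la_j:=\mu_j^{-1}$ gives the unbounded sequence $0<\la_1\le\la_2\le\dots$ together with $\cB_0(e_i,e_j)=\la_j\int_{B_R}e_ie_j\,dx$. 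On each (finite-dimensional) eigenspace of $S$ the form $\cB_0$ is a scalar multiple of the $L^2$-product, so I may normalize the $e_j$ to be $L^2$-orthonormal, obtaining $\int_{B_R}e_ie_j\,dx=\delta_{ij}$ and $\cB_0(e_i,e_j)=\la_j\delta_{ij}$; by density, $\{e_j\}$ is then an orthonormal basis of $L^2(B_R)$ as well. Finally each $e_j$ weakly solves $-\Delta e_j=\la_je_j$ in $B_R$, so by elliptic regularity and analytic hypoellipticity of the constant-coefficient elliptic operator $\Delta+\la_j$ it is real-analytic in $B_R$. This establishes (i).

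For (ii) I would take $j_*,j^*$ and $X^-,X^0,X^+$ as in the statement; these subspaces are mutually orthogonal for every scalar product appearing below, they span $X$ by completeness of $\{e_j\}$, and $X^-,X^0$ are finite-dimensional because $\la_j\to\infty$. For $u=\sum_jc_je_j$ one computes $\cB_k(u)=\cB_0(u)-k^2\int_{B_R}u^2\,dx=\sum_j(\la_j-k^2)c_j^2$. I then define $\langle u,v\rangle:=\sum_jw_jc_jd_j$, for $u=\sum c_je_j$ and $v=\sum d_je_j$, where $w_j:=|\la_j-k^2|$ if $\la_j\ne k^2$ and $w_j:=1$ if $\la_j=k^2$. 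Because only finitely many $\la_j$ are at most $k^2$ and $w_j/\la_j\to 1$, the induced norm is equivalent to $\cB_0(\cdot)^{1/2}$ and hence to the $H^1$-norm; the $e_j$ are orthogonal for $\langle\cdot,\cdot\rangle$ by construction, which together with the relations above proves (iii). Writing $u^\pm,u^0$ for the $\langle\cdot,\cdot\rangle$-orthogonal projections, the $X^0$-contributions to $\sum_j(\la_j-k^2)c_j^2$ vanish and the remaining terms regroup as $\|u^+\|^2-\|u^-\|^2$, which is (ii).

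For (iv) I would use that $X^0\ne\{0\}$ exactly when $k^2$ is an eigenvalue of \eqref{eq:10}. Since $-\Delta$ and $K_R$ both commute with the natural $O(N)$-action, \eqref{eq:10} decouples along the decomposition of $L^2(S_R)$ into spaces $\cH_n$ of spherical harmonics of degree $n$, on each of which $K_R$ acts as multiplication by a scalar $\kappa_n(R)$. Thus $k^2$ is an eigenvalue of \eqref{eq:10} if and only if, for some $n$, the radial equation $-v''-\tfrac{N-1}{r}v'+\tfrac{n(n+N-2)}{r^2}v=k^2v$ on $(0,R)$ has a solution regular at $r=0$ which satisfies $v'(R)=\kappa_n(R)v(R)$. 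The solutions regular at the origin form the one-dimensional span of $u_n(r):=r^{(2-N)/2}J_{n+(N-2)/2}(kr)$, so the condition reduces to $g_n(R):=u_n'(R)-\kappa_n(R)u_n(R)=0$. From the explicit representation of the capacity operator in Section~\ref{sec-prop-capac-oper} one sees that $\kappa_n$ is real-analytic on $(0,\infty)$ (the Hankel function occurring in it has no zeros on the positive real axis), hence so is $g_n$, and $g_n$ is not identically zero. Consequently the zero set $\cD_n$ of $g_n$ is discrete in $(0,\infty)$, in particular countable, and $\cD:=\bigcup_{n\ge0}\cD_n$ is the required set.

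I expect the main obstacle to be (iv), and within it the input drawn from Section~\ref{sec-prop-capac-oper}: the explicit formula for $\kappa_n(R)$ is needed both to know that $R\mapsto\kappa_n(R)$ is real-analytic on $(0,\infty)$ and to verify that $g_n\not\equiv 0$ --- the latter being readable off, for example, from the behaviour of $g_n$ as $R\to 0^+$, or from the oscillation of $u_n$ as $R\to\infty$. By contrast, (i)--(iii) should be routine: they combine the spectral theorem for compact self-adjoint operators, Rellich's compactness theorem, and bookkeeping with the eigenbasis. The only mildly delicate point there is arranging the single basis $\{e_j\}$ to be simultaneously orthogonal for $\langle\cdot,\cdot\rangle$, $\cB_0$ and the $L^2$-product, which works because on each eigenspace of $S$ all three forms are scalar multiples of the $L^2$-product.
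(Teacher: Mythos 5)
Your argument for (i)--(iii) is essentially the paper's: both introduce the compact, $\cB_0$-self-adjoint, positive operator $S$ (the paper calls it $K$) via $\cB_0(Su,v)=\int_{B_R}uv\,dx$, apply the spectral theorem, set $\lambda_j=\mu_j^{-1}$, and use elliptic regularity plus analytic hypoellipticity for the analyticity of the $e_j$. For (ii) the paper writes $\langle u,v\rangle=\cB_k(u^+,v^+)-\cB_k(u^-,v^-)+\int_{B_R}u^0v^0\,dx$ in terms of the projections; after $L^2$-normalizing the $e_j$ this is exactly your weighted-coefficient formula $\sum_j w_j c_j d_j$ with $w_j=|\lambda_j-k^2|$ off $X^0$ and $w_j=1$ on $X^0$, so the two definitions coincide. (iii) is then the same bookkeeping.

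For (iv) you take a genuinely different, and somewhat cleaner, route. The paper explicitly solves the interior Neumann problem for the $\ell$-th spherical harmonic block using Bessel functions $J_{\mu_\ell}$, substitutes the formula \eqref{eq:18} for $K_R$, and uses the Wronskian identity \eqref{eq:32} to reduce the resonance condition to $Y_{\mu_\ell}(kR)=0$; its set $\cD$ also throws in the (possibly redundant) degeneracy set of the Neumann problem, and the final conclusion is only that $R\notin\cD\Rightarrow X^0=\{0\}$ (which is all the paper needs). You instead decouple along spherical harmonics at the level of the operator, write the eigenvalue condition for the $n$-th block as $g_n(R)=u_n'(R)-\kappa_n(R)u_n(R)=0$ with $u_n(r)=r^{(2-N)/2}J_{n+(N-2)/2}(kr)$, and conclude countability from real-analyticity of $g_n$. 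This is sound --- $\kappa_n$ is real-analytic because $|H^{(1)}_{\mu_n}|^2=J_{\mu_n}^2+Y_{\mu_n}^2>0$ on $(0,\infty)$ --- and it avoids the explicit Bessel computation. The tradeoff is that the paper's route produces an explicit description of $\cD$ (used nowhere essential, but informative), whereas yours gives only discreteness. The one place you should sharpen is the claim $g_n\not\equiv0$: you gesture at two possible verifications but carry out neither. Either the small-$R$ asymptotics you mention (using $\mathrm{Re}\,z_n(r)\to-(n+N-2)$ as $r\to0^+$), or --- more in the spirit of the paper --- a short computation showing that $g_n(R)$ is a nonvanishing multiple of $Y_{\mu_n}(kR)$ via the Wronskian identity, would close this gap cleanly.
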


\begin{proof}
(i) We first consider $H^1(B_R)$ with the equivalent scalar product
$\cB_0$. Since $K_R$ is negative definite, all eigenvalues of 
\eqref{eq:10} must be positive. Hence $u \in H^1(B_R)$ is an eigenfunction of
\eqref{eq:10} corresponding to $\lambda \in \R$ if and only if
$\lambda>0$ and $u$ is an eigenfunction of the operator $K \in \cL(X)$
defined by 
\begin{equation}\label{equation:K}
\cB_0(Ku,v)= \int_{B_R}uv\,dx \qquad \text{for $u,v \in X$}
\end{equation}
corresponding to the eigenvalue $\frac{1}{\lambda}$.
The operator $K$ is bounded, symmetric with respect to $\cB_0$, nonnegative and compact, since the
embedding $X \hookrightarrow L^2(B_R)$ is compact. Moreover, $0$ is
not an eigenvalue of $K$. Hence $K$ admits a sequence of positive eigenvalues
$\mu_1 \ge \mu_2 \ge ...$ such that $\mu_j \to 0$ as $j \to \infty$,
and a corresponding complete system of $\cB_0$-orthogonal
eigenfunctions $e_j$, $j \in \N$. Assertion (i) now follows with
$\lambda_j= \frac{1}{\mu_j}$, $j \in \N$ and the family $\{e_j\::\: j
\in \N\}$ thus obtained.\\
(ii) For $u \in X$, let $u^\pm, u^0$ denote the $L^2(B_R)$-orthogonal 
projections of $u$ onto the subspaces $X^\pm$, $X^0$, respectively,
as defined in the assertion. For $u,v \in X$,
we define
$$
\langle u,v \rangle = \cB_k(u^+,v^+)- \cB_k(u^-,v^-) +
\int_{B_R} u^0 v^0\,dx.
$$
It is easy to see that this scalar product has the desired
properties, and by construction the splitting $X=X^-\oplus X^0 \oplus
X^+$ is orthogonal with respect to this scalar product.\\ 
(iii) The $\cB_0$-orthogonality of the family $\{e_j\::\:j \in \N\}$ has
already been shown above, and the $L^2(B_R)$-orthogonality then follows from
\eqref{equation:K}. From this, the orthogonality with respect to $\langle
\cdot, \cdot \rangle$ immediately follows by definition.\\
(iv) This part, which relies on special properties of the capacity
operator and Hankel functions, will be proved in
Section~\ref{sec-prop-capac-oper} below.
\end{proof}

We now consider the functional 
\begin{equation}\label{eqn:Phi}
\Phi: X \to \R,\qquad \Phi(u) = \frac{1}{2} \cB_k(u) - \varphi(u)= \frac{1}{2}\Bigl(\|u^+\|^2-\|u^-\|^2\Bigr)-\varphi(u),
\end{equation}
where $\varphi(u)= \int_{B_R}F(x,u(x))\,dx$ for
$u \in X$ and $F(x,t)= \int_0^t f(x,s)\,ds$ for $t \in \R$. 
It is well known that $\varphi \in \cC^1(X,\R)$ as a consequence of assumption
$(f_1)$, and 
\begin{equation}
  \label{eq:phi}
\text{$\varphi$ is nonnegative on $X$}  
\end{equation}
by $(f_3)$. Moreover, the critical points of
$\Phi$ correspond to restrictions to $B_R$ of solutions of \eqref{eq:33}. Indeed, 
if $u\in X$ is a critical point of  $\Phi$, then 
$$
0 = \int_{B_R} \Bigl(\nabla u\cdot \nabla w
-k^2 u w - f(x,u) w \Bigr)\,dx - \int_{S_R}w K_R u\,d \sigma
$$
for every $w \in X$, hence $u$ is a weak solution of the
problem 
\begin{equation}
  \label{eq:33_i}
\left \{
  \begin{aligned}
- \Delta u - k^2 u &= f(x,u)&&\qquad  \text{in $B_R$},\\
\frac{\partial u}{\partial\eta} &= K_R u &&\qquad \text{on $S_R$}.\\       
  \end{aligned}
\right.
\end{equation}
As explained in Section~\ref{sec-prop-capac-oper}, extending $u$ by the
real part of the unique solution of \eqref{eq:2} in $H_R^{\C}$ then yields a
solution of \eqref{eq:33},~\eqref{eqn:radiation2}.

\section{Proof of Theorem 1.1}\label{sec-thm-1}
We fix $R \in (0,\infty) \setminus \cD$ from now on, so that $X^0=
\{0\}$ by Lemma~\ref{sec:some-tools-2}. As remarked in the
introduction, this restriction is made only in order to simplify the proofs, whereas all
results can still be proved --- with additional effort --- in the case where
$X^0$ has positive dimension.

We first collect useful facts about the functional $\Phi$ defined in
\eqref{eqn:Phi}. For this we recall the following well-known consequence of ($f_1$) and ($f_2$):
\begin{equation}
  \label{eq:standardestimate}
\forall\,\eps>0,\, \exists\, C_\eps>0 \text{ such that } |f(x,u)|\leq \eps|u|+C_\eps |u|^{p-1},
\text{ }\forall\;(x,u)\in\R^N\times\R.
\end{equation}

\begin{lemma}\label{lemma-new1}
\begin{enumerate}
\item[(i)] There exists $\alpha_0>0$ such that $\inf
  \limits_{\Sigma_\alpha}\Phi>0$ for $\alpha \in (0,\alpha_0)$,
where $\Sigma_\alpha:= \{u \in X^+\,:\,\|u\|=\alpha\}$.
\item[(ii)] Let $\cZ$ be a closed cone contained in a finite-dimensional 
subspace $W$ of $X$ and such that 
  \begin{equation}
    \label{eq:19}
    \begin{aligned}
&\text{$\{x \in \Omega\::\: w(x) \not=0\}$ has positive measure for}\\
&\text{every $w \in \cZ \setminus \{0\}$ with $\|w^+\| \ge \|w^-\|$.} 
    \end{aligned}
  \end{equation}
Then there exists $\rho=\rho(\cZ)>0$ such that $\Phi(u)\leq 0$ for all $u\in \cZ$ 
satisfying $\|u\|\geq \rho$.
\end{enumerate}
\end{lemma}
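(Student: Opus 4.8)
For part (i), the plan is the usual mountain-pass estimate near the origin. On $X^+$ one has $\Phi(u)=\frac12\|u\|^2-\varphi(u)$ by \eqref{eqn:Phi} and Lemma~\ref{sec:some-tools-2}(ii). Integrating the pointwise bound \eqref{eq:standardestimate} gives $F(x,t)\le\frac{\eps}{2}t^2+\frac{C_\eps}{p}|t|^p$, hence $\varphi(u)\le\frac{\eps}{2}\|u\|_{L^2(B_R)}^2+\frac{C_\eps}{p}\|u\|_{L^p(B_R)}^p$. Since $2<p<2^*$, the embedding $H^1(B_R)\embed L^p(B_R)$ is continuous; combined with the equivalence of $\|\cdot\|$ with the $H^1(B_R)$-norm (Corollary~\ref{sec:some-tools-1} and Lemma~\ref{sec:some-tools-2}(ii)) this yields $\varphi(u)\le c_1\eps\|u\|^2+c_2 C_\eps\|u\|^p$ on $X$ for suitable $c_1,c_2>0$. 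Fixing $\eps$ with $c_1\eps\le\frac14$ gives $\Phi(u)\ge\frac14\|u\|^2-c_2C_\eps\|u\|^p$ for $u\in X^+$, so that $\inf_{\Sigma_\alpha}\Phi\ge\frac14\alpha^2-c_2C_\eps\alpha^p>0$ as soon as $\alpha^{p-2}<(4c_2C_\eps)^{-1}$; this choice determines $\alpha_0$. This part is routine.

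For part (ii), I would argue by contradiction. If no such $\rho$ exists, choose $u_n\in\cZ$ with $\|u_n\|\ge n$ and $\Phi(u_n)>0$ for all $n$. Since $\varphi\ge0$ by \eqref{eq:phi}, the inequality $0<\Phi(u_n)=\frac12\bigl(\|u_n^+\|^2-\|u_n^-\|^2\bigr)-\varphi(u_n)$ forces both $\|u_n^+\|\ge\|u_n^-\|$ and
$$
\frac{\varphi(u_n)}{\|u_n\|^2}<\frac{\cB_k(u_n)}{2\|u_n\|^2}=\tfrac12\,\cB_k(v_n),\qquad v_n:=\frac{u_n}{\|u_n\|}.
$$
The sequence $(v_n)$ lies on the unit sphere of the finite-dimensional space $W$, so after passing to a subsequence $v_n\to v$ in $X$ with $\|v\|=1$. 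As $\cZ$ is a closed cone, $v\in\cZ\setminus\{0\}$; and since the projections onto $X^\pm$ are $\|\cdot\|$-continuous, $\|v_n^\pm\|\to\|v^\pm\|$, so $\|v^+\|\ge\|v^-\|$. Hence hypothesis \eqref{eq:19} applies to $v$ and shows that $A:=\{x\in\Omega:\ v(x)\ne0\}$ has positive measure.

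To finish, observe that $W$ is finite-dimensional, so $v_n\to v$ also pointwise a.e.\ in $B_R$, whence $|u_n(x)|=\|u_n\|\,|v_n(x)|\to\infty$ for a.e.\ $x\in A$. By $(f_3)$ (note $A\subset\Omega$), for a.e.\ $x\in A$
$$
\frac{F(x,u_n(x))}{\|u_n\|^2}=\frac{F(x,u_n(x))}{u_n(x)^2}\,v_n(x)^2\ \longrightarrow\ +\infty .
$$
Since $F\ge0$ and $A\subset\Omega\subset B_R$, Fatou's lemma gives $\varphi(u_n)/\|u_n\|^2\ge\int_A F(x,u_n(x))/\|u_n\|^2\,dx\to+\infty$. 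On the other hand $\cB_k$ is a continuous bilinear form on $X$ (the boundary term is controlled by Lemma~\ref{sec:capacity-operator} together with the trace estimate), so $\cB_k(v_n)\to\cB_k(v)$ stays bounded, contradicting the displayed inequality. This contradiction proves (ii).

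The only delicate point is in part (ii): one must ensure that the limit $v$ of the normalized sequence still lies in the region $\{\|w^+\|\ge\|w^-\|\}$ on which \eqref{eq:19} is informative. This is exactly where the sign condition $\varphi\ge0$ (a consequence of $(f_3)$) enters — it lets one deduce $\|u_n^+\|\ge\|u_n^-\|$ from $\Phi(u_n)>0$ and pass this relation to the limit; the remaining ingredients (finite-dimensional compactness, a.e.\ convergence, the superquadratic growth $(f_3)$ via Fatou, and continuity of $\cB_k$) are standard.
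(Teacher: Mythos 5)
Your proof is correct and follows essentially the same strategy as the paper: in (i), the quantitative $o(\|u\|^2)$ bound on $\varphi$ via \eqref{eq:standardestimate} and the Sobolev embedding; in (ii), contradiction via normalization $v_n=u_n/\|u_n\|$, finite-dimensional compactness, passing $\|v_n^+\|\ge\|v_n^-\|$ to the limit so that \eqref{eq:19} applies, then Fatou with $(f_3)$. The only cosmetic difference is that you contradict the boundedness of $\cB_k(v_n)$ against $\varphi(u_n)/\|u_n\|^2\to\infty$, whereas the paper sandwiches $0\le\Phi(u_n)/\|u_n\|^2$ and drives the upper bound to $-\infty$; the two are equivalent.
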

Here and in the following, a set $\cZ \subset X$ is called a 
cone if $\lambda x \in \cZ$ for every $x \in
\cZ$, $\lambda \ge 0$. In particular, (ii) applies to $\cZ= W$ if $W$
is a finite-dimensional subspace of $X$.

\begin{proof}
(i) For $u \in X^+$ we have $\Phi(u) = \frac{1}{2}\|u\|^2-\varphi(u)$ and
$\varphi(u) = o(\|u\|^2)$ as $u \to 0$ by \eqref{eq:standardestimate} and
Sobolev embeddings. Hence the conclusion
follows.\\
(ii) Suppose by contradiction that a sequence $(u_n)_n\subset \cZ$ exists with $\Phi(u_n)> 0$ for all
$n$ and $\|u_n\|\to\infty$ as $n\to\infty$. Setting
$w_n=\frac{u_n}{\|u_n\|}$, we may pass to a subsequence such that $w_n
\to w \in W$ since $W$ is finite-dimensional. Since $w_n \in \cZ$ and
$\|w_n\|=1$ for all $n$, we have $w \in \cZ$ and $\|w\|=1$. Moreover, by
\eqref{eq:phi} we have 
$$
0\le \liminf_{n \to \infty} \frac{\Phi(u_n)}{\|u_n\|^2}\le \frac12
\lim_{n \to \infty} (\|w_n^+\|^2-\|w_n^-\|^2)
= \frac12 ( \|w^+\|^2-\|w^-\|^2)
$$
and therefore $\|w^+\| \ge \|w^-\|$. Hence
\eqref{eq:19} implies that $\Omega_w:=\{x \in \Omega\::\: w(x)
\not=0\}$ has positive measure. Passing to a subsequence, we may also
assume that $w_n \to w$ pointwise a.e. in $B_R$, which implies that 
$$
|u_n(x)|=\|u_n\| |w_n(x)|\to\infty \qquad \text{as $n\to\infty$ for
  a.e. $x \in \Omega_w$.}
$$
By $(f_3)$ and Fatou's Lemma, we therefore deduce that
$$
0\le \frac{\Phi(u_n)}{\|u_n\|^2}\le \frac12 (\|w_n^+\|^2-\|w_n^-\|^2) 
- \int_{\Omega_w}\frac{F(x,u_n)}{u_n^2}w_n^2\, dx\to-\infty
$$
as $n\to\infty$. This contradiction proves the claim.
\end{proof}

To proceed with the proof of Theorem~\ref{thm:1}, we shall decompose the nonlinearity $f$ 
as follows. We write $f=f_1+f_2$, where 
$f_i: \R^N \times \R \to \R$, $i=1,2$ are defined by 
$$
f_1(x,u)= 
\left \{
  \begin{aligned}
& f(x,u),&&|u| \ge s_0,\\
& \frac{u^2}{s_0^2 }f(x,s_0),&&0 \le u \le s_0,\\ 
& \frac{u^2}{s_0^2}f(x,-s_0),&&-s_0 \le u \le 0,
  \end{aligned}
\right.
$$
and we put $f_2= f-f_1$. Setting
$F_i(x,u)=\int_0^u f_i(x,s)\,ds$, $i=1,2$ for $x \in \R^N$, $u \in
\R$, we see that 
\begin{equation}
  \label{eq:6}
\begin{aligned}
&\text{$f_2(x,u)=0$ if $x \in \R^N \setminus
\Omega$ or $|u| \ge s_0$;}\\ 
&\text{$F_2(x,u)=0$ if $x \in \R^N \setminus
\Omega$, $u \in \R$;}\\ 
&\text{$f_2$ and $F_2$ are bounded on $\R^N \times \R$.}   
\end{aligned}
\end{equation}
Moreover, $f_1$ satisfies condition $(f_2)$ and the following stronger version of condition
$(f_4)$. For every $x \in \R^N$,
\begin{equation}
  \label{eq:13}
\text{$u \mapsto \frac{f_1(x,u)}{|u|}$ is nondecreasing on
  $\R \setminus \{0\}$.}    
\end{equation}

We decompose the functional $\varphi: X \to \R$ accordingly and write $\varphi=\varphi_1 + \varphi_2$ with 
$$
\varphi_i(u) = \int_{B_R} F_i(x,u(x))\,dx = \int_{\Omega}
F_i(x,u(x))\,dx, \qquad i=1,2.
$$
We note that $\varphi_2$ is bounded on $X$ by \eqref{eq:6}. The following proposition will 
be a main technical step in the proof
of Theorem~\ref{thm:1}.

\begin{proposition}
\label{prop:cerami}
$\Phi$ satisfies the Cerami condition in $X$,  i.e., every
sequence $(u_n)_n\subset X$ such that $\Phi(u_n) \to c$ for some $c
\in \R$ and $(1+\|u_n\|)\|\Phi'(u_n)\| \to 0$ as $n\to\infty$ has a 
subsequence which converges in $X$.
\end{proposition}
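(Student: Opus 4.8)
The plan is to establish the Cerami condition in the standard two-stage manner: first show that any Cerami sequence $(u_n)_n$ is bounded in $X$, then upgrade weak convergence to strong convergence using the compactness of the embedding $X \embed L^q(B_R)$ for $q < 2^*$ together with the structure of $\Phi'$. The boundedness step is where I expect the main obstacle to lie, precisely because $f$ vanishes outside $\Omega \subset B_R$: the nonlinear term controls $u_n$ only on $\Omega$, not on all of $B_R$, and certainly not on $S_R$, so the usual trick of testing with $u_n$ and combining with $\Phi(u_n) \to c$ does not immediately bound $\|u_n\|_X$.

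For the boundedness, I would argue by contradiction: suppose $\|u_n\| \to \infty$ and set $w_n = u_n/\|u_n\|$, so $\|w_n\| = 1$ and, after passing to a subsequence, $w_n \weak w$ in $X$ and $w_n \to w$ in $L^q(B_R)$ and a.e. The standard Cerami relations give $\frac{1}{2}\cB_k(u_n,u_n) - \varphi(u_n) \to c$ and $\cB_k(u_n,u_n) - \langle \varphi'(u_n), u_n\rangle = o(1)$, whence $\varphi_1(u_n) - \frac12\langle\varphi_1'(u_n),u_n\rangle$ is bounded (the $f_2$-part being bounded by \eqref{eq:6}); the monotonicity \eqref{eq:13} forces the integrand of $\frac12 u f_1(x,u) - F_1(x,u)$ to be nonnegative and to blow up wherever $|u_n| \to \infty$, which — via Fatou — shows that on the set $\{w \neq 0\} \cap \Omega$ one must have $w = 0$ a.e., i.e. $w$ vanishes a.e. on $\Omega$. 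Then on $\Omega$ the functional $\varphi(u_n)/\|u_n\|^2 \to 0$, so dividing the energy relation by $\|u_n\|^2$ yields $\|w^+\|^2 - \|w^-\|^2 \to 2c/\|u_n\|^2 + o(1) \to 0$, but also testing $\Phi'(u_n)$ against $u_n^+ - u_n^-$ and dividing by $\|u_n\|^2$ gives $\|w^+\|^2 + \|w^-\|^2 \to 0$ (the nonlinear contributions again vanish since $w|_\Omega = 0$ and $f$ is supported in $\Omega$), hence $w = 0$, contradicting $\|w\| = 1$. The delicate point here is handling the weak-limit terms carefully so that the nonlinear integrals really do drop out — this uses that $f(x,u_n)$ is supported in $\Omega$ and that $w_n \to w = 0$ in $L^q(\Omega)$, combined with \eqref{eq:standardestimate} and the growth bound $p < 2^*$.

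Once $(u_n)_n$ is bounded, I pass to a subsequence with $u_n \weak u$ in $X$ and $u_n \to u$ in $L^p(B_R)$. Writing $\cB_k(u_n - u, u_n - u)$ and using that $\langle \Phi'(u_n), u_n - u\rangle \to 0$, I get $\cB_k(u_n, u_n - u) = \int_{B_R} f(x,u_n)(u_n - u)\,dx + o(1)$, and the right-hand side tends to $0$ because $f(x,u_n)$ is bounded in $L^{p'}(B_R)$ by \eqref{eq:standardestimate} while $u_n - u \to 0$ in $L^p(B_R)$. Since $\cB_k(\cdot,\cdot)$ differs from the equivalent inner product $\langle\cdot,\cdot\rangle$ only on the finite-dimensional space $X^-$ (recall $X^0 = \{0\}$ by the choice of $R$), on which strong and weak convergence coincide, it follows that $\|u_n^+ - u^+\| \to 0$; combined with $u_n^- \to u^-$ this gives $u_n \to u$ in $X$, completing the proof. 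I would remark that the only genuinely nonstandard ingredient is the boundedness argument, and the source of its length is exactly the interplay between the vanishing of $f$ outside $\Omega$ and the quadratic term living on all of $X$ including the boundary part encoded by $K_R$.
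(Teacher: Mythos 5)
Your overall plan follows the paper's skeleton (contradiction for boundedness, then compactness), and the final compactness step and the first half of your boundedness argument are correct. However, the second half of the boundedness argument — the case where the normalized weak limit $w$ vanishes a.e.\ on $\Omega$ — contains a genuine gap, and it is precisely the gap that the paper's Lemma~\ref{sec:proof-theor-refthm:1-1} is designed to close.

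First, a point in your favour: deriving $w\equiv 0$ a.e.\ on $\Omega$ from the boundedness of $\int_{\Omega}\bigl[\tfrac12 u_n f(x,u_n)-F(x,u_n)\bigr]\,dx$, the nonnegativity and pointwise blow-up of $\tfrac12 u f_1(x,u)-F_1(x,u)$ (which indeed follows from \eqref{eq:13} together with $(f_3)$, since $F_1(x,u)\le F_1(x,u_0)+\bigl(\tfrac{u}{2}-\tfrac{u_0^2}{2u}\bigr)f_1(x,u)$, so that $\tfrac12 uf_1-F_1\ge \tfrac{u_0^2}{2}\tfrac{f_1(x,u)}{u}-F_1(x,u_0)\to\infty$), and Fatou, is a legitimate alternative to the paper's eventual use of $(f_3)$ directly; this part is fine, although the paper orders the two cases differently.

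The gap is in the step where you assert that, because $w|_{\Omega}=0$, the nonlinear contributions ``again vanish,'' so that $\varphi(u_n)/\|u_n\|^2\to 0$ and $\|u_n\|^{-2}\int_{\Omega}f(x,u_n)(u_n^+-u_n^-)\,dx\to 0$. Neither follows from $w_n\to w=0$ in $L^q(\Omega)$. Estimating with \eqref{eq:standardestimate} and H\"older, the dangerous term is of the form $\|u_n\|^{p-2}\,\|w_n\|_{L^p(\Omega)}^{p-1}\,\|w_n^+-w_n^-\|_{L^p}$; here $\|u_n\|^{p-2}\to\infty$ while $\|w_n\|_{L^p(\Omega)}\to 0$, and the product is indeterminate. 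In fact one can see directly that the nonlinear term need \emph{not} vanish: testing $\Phi'(u_n)$ against $u_n$ gives $\|u_n\|^{-2}\int_{\Omega}f(x,u_n)u_n\,dx=\cB_k(u_n)/\|u_n\|^2+o(1)\to 2s^2-1$, where $s=\lim\|w_n^+\|$, and nothing forces $s^2=\tfrac12$. So the conclusion $\|w_n^+\|^2+\|w_n^-\|^2\to 0$ (which you use to reach a contradiction with $\|w_n\|=1$) is unjustified.

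This is not a minor technicality: it is the central difficulty that the vanishing of $f$ outside $\Omega$ creates, and it is exactly what the paper's Lemma~\ref{sec:proof-theor-refthm:1-1} addresses. That lemma, based on the Szulkin--Weth-type inequality \eqref{eq:14}, shows that a Cerami sequence nearly maximizes $\Phi$ along rays $r_n u_n+v_n$ with $v_n\in X^-$, up to an additive error $\tilde C$. In the case $w\equiv 0$ on $\Omega$, one then exhibits, using $\varphi(tw_n)\to 0$ and $\varphi(tw_n+w_n^+)\to\varphi(w^+)$ (which \emph{do} hold, because these involve a fixed $t$ rather than $\|u_n\|$), a point of the form $tw_n+v_n$ with $\|v_n\|\le 1$ and $\Phi(tw_n+v_n)>c+\tilde C+1$, contradicting the lemma. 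Your argument has no substitute for this comparison mechanism. To repair your proof you would need to either invoke Lemma~\ref{sec:proof-theor-refthm:1-1} in this case, or find a way to handle the indeterminate product, which I do not see how to do from the hypotheses $(f_0)$--$(f_4)$ alone.
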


The proof of this proposition is quite long and requires subtle
estimates. Parts of the proof are inspired by \cite{li-wang11} and \cite{liu12},
but we need new arguments to deal with the difficulty that the
nonlinearity may vanish on a subset of $B_R$ of positive measure. A key role in the
proof is played by the useful inequality 
\begin{equation}
  \label{eq:14}
  \begin{aligned}
  &f_1(x,u)[s(\frac{s}{2}+1)u+(1+s)v]+F_1(x,u)-F_1(x,[1+s]u+v)\le 0\\
&\text{for $x \in \R^N$, $u,v \in \R$ and $s\ge -1$,}
  \end{aligned}
\end{equation}
which follows from (\ref{eq:13}). Indeed, as noted in \cite{liu12}, this
inequality is a weak version of \cite[Lemma 2.2]{szulkin-weth09}. As a
consequence of (\ref{eq:14}) and of the properties of $f_2$, we may derive
the following 

\begin{lemma}
  \label{lem1extra02}
For every $K>0$ there is a constant $C=C(K)>0$ with the following
property. If $u,s,v \in \R$ are numbers with $-1 \le s \le K$ and $|v|
\le K$, then 
$$
f(x,u)[s(\frac{s}{2}+1)u+(1+s)v]+F(x,u)-F(x,[1+s]u+v)\le C \qquad
\text{for all $x \in \R^N$.}
$$
\end{lemma}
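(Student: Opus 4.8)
The plan is to reduce the inequality in Lemma~\ref{lem1extra02} to the already-established inequality \eqref{eq:14} for $f_1$, plus a crude uniform bound for the $f_2$-contribution coming from \eqref{eq:6}. Write $f = f_1 + f_2$ and $F = F_1 + F_2$, and split the left-hand side as
\[
\underbrace{f_1(x,u)[s(\tfrac{s}{2}+1)u+(1+s)v]+F_1(x,u)-F_1(x,[1+s]u+v)}_{=:A}
\;+\;\underbrace{f_2(x,u)[s(\tfrac{s}{2}+1)u+(1+s)v]+F_2(x,u)-F_2(x,[1+s]u+v)}_{=:B}.
\]
By \eqref{eq:14}, $A \le 0$ for all $x \in \R^N$, $u,v \in \R$ and $s \ge -1$; since the hypothesis gives $-1 \le s \le K$, this covers the first term for free, leaving only $B$ to estimate from above.

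For the second term $B$, I would use the three facts collected in \eqref{eq:6}: $f_2$ is supported in the set $|u| \le s_0$, and $f_2$ and $F_2$ are bounded on $\R^N \times \R$, say $|f_2| \le M$ and $|F_2| \le M$. The terms $F_2(x,u)$ and $-F_2(x,[1+s]u+v)$ are then bounded in absolute value by $M$ each, contributing at most $2M$. For the term $f_2(x,u)[s(\tfrac{s}{2}+1)u+(1+s)v]$, note it vanishes unless $|u| \le s_0$; on that set $|s(\tfrac{s}{2}+1)u+(1+s)v| \le (K(\tfrac{K}{2}+1))s_0 + (1+K)K$ using $-1 \le s \le K$ and $|v| \le K$, so this term is bounded by $M\bigl((K(\tfrac{K}{2}+1))s_0 + (1+K)K\bigr)$. (One should check that $s(\tfrac s2+1)$ is bounded for $s \in [-1,K]$: it equals $\tfrac12(s+1)^2 - \tfrac12 \in [-\tfrac12, \tfrac12 K^2 + K]$, so indeed bounded by a constant depending only on $K$.) Adding these up gives $B \le C(K)$ with $C(K) := 2M + M\bigl((\tfrac{K^2}{2}+K)s_0 + (1+K)K\bigr)$, a constant depending only on $K$ (and on the fixed data $s_0$, $M$).

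Combining, the left-hand side is $A + B \le 0 + C(K) = C(K)$ for all $x \in \R^N$, which is exactly the claimed estimate. There is essentially no obstacle here: the only mild point of care is to record that $s(\tfrac s2 + 1)$ and $1+s$ are uniformly bounded on the interval $[-1,K]$ so that all the coefficients multiplying $u$ (restricted to $|u| \le s_0$) and $v$ (with $|v| \le K$) are controlled by a constant depending only on $K$. Everything else is a direct application of \eqref{eq:14} and the boundedness and support properties \eqref{eq:6}. I would present the argument in exactly this two-part form: invoke \eqref{eq:14} to kill $A$, then bound $B$ termwise using $|f_2| \le M$, $|F_2| \le M$, and $\operatorname{supp} f_2(x,\cdot) \subset [-s_0,s_0]$.
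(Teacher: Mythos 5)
Your proposal is correct and follows essentially the same route as the paper: split $f = f_1 + f_2$, $F = F_1 + F_2$, dispatch the $f_1$-part using inequality \eqref{eq:14}, and bound the $f_2$-part via the boundedness and compact-support properties in \eqref{eq:6}. The only cosmetic difference is that the paper groups $|f_2(x,u)s(\tfrac{s}{2}+1)u|$ as a single bounded quantity, whereas you bound $|u|\le s_0$ on the support of $f_2$ explicitly; both are the same observation.
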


\begin{proof}
By \eqref{eq:6} there exists a constant $C_1>0$ (depending
on $K$) such that 
$$
|f_2(x,u)| \le C_1, \quad |F_2(x,u)| \le C_1\quad \text{and} \quad |f_2(x,u)s(\frac{s}{2}+1)u| \le C_1
$$
for $u \in \R$, $x \in \R^N$, $|s| \le K+1$. Consequently, we have 
\begin{align*}
f_2(x,u)&[s(\frac{s}{2}+1)u+(1+s)v]+F_2(x,u)-F_2(x,[1+s]u+v) \\
&\le C_1[1+K(K+1)] + 2 C_1 
\end{align*}
for $x \in \R^N$ and $u,s,v \in \R$ with $-1 \le s \le K$ and $|v| \le K$. 
Since $f=f_1+f_2$, $F=F_1+F_2$ and (\ref{eq:14}) holds, the claim follows with $C:=C_1[1+K(K+1)] + 2 C_1$.
\end{proof}

The next step in the proof of Proposition~\ref{prop:cerami} is the
following relative energy estimate. In the following, a sequence $(u_n)_n$ in $X$ is called a
{\em Cerami sequence} for $\Phi$ if $\Phi(u_n) \to c$ for some $c
\in \R$ and $(1+\|u_n\|)\|\Phi'(u_n)\| \to 0$ as $n\to\infty$.

\begin{lemma}
\label{sec:proof-theor-refthm:1-1}
For every $\kappa>0$ there exists $\tilde C= \tilde C(\kappa)>0$ with the following property. If
$(u_n)_n$ is a Cerami sequence for $\Phi$, and $r_n \ge 0$, $v_n \in X^-$, $n \in \N$ are given with $r_n \le
\kappa$ and $\|v_n\| \le \kappa$ for all $n \in \N$, then 
\begin{equation*}
\Phi(r_n u_n+v_n) \le \Phi(u_n) + \tilde C +o(1) \qquad \text{as $n \to \infty$.}
\end{equation*}
\end{lemma}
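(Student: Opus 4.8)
The plan is to expand $\Phi(r_n u_n + v_n)$ and compare it termwise with $\Phi(u_n)$, using Lemma~\ref{lem1extra02} to absorb the nonlinear part into a constant and using the Cerami property to absorb the "error" terms into $o(1)$. Write $w_n = r_n u_n + v_n$. The first observation is that $r_n$ and $v_n$ should be reparametrized so that Lemma~\ref{lem1extra02} applies directly: setting $s_n := r_n - 1 \in [-1, \kappa - 1]$, we have $w_n = (1+s_n)u_n + v_n$, which is exactly the combination appearing in \eqref{eq:14} and in Lemma~\ref{lem1extra02}. Since $|v_n| \le \kappa$ as elements of $X = H^1(B_R)$ controls the $L^p$-norm via Sobolev embedding but not the pointwise values, a small technical point is that Lemma~\ref{lem1extra02} is stated pointwise in $v \in \R$; I would instead apply the inequality \eqref{eq:14} (and its $f_2$-analogue from the proof of Lemma~\ref{lem1extra02}) with the function $v_n(x)$ in place of the scalar $v$, integrate over $B_R$, and bound $\int_{B_R} |v_n|$-type terms by $C\|v_n\|_{H^1} \le C\kappa$ via Hölder and the Sobolev embedding. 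This produces a constant $\tilde C = \tilde C(\kappa)$.

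First I would handle the quadratic form. Using the splitting $X = X^- \oplus X^+$ (recall $X^0 = \{0\}$) and the orthogonality of $X^\pm$ with respect to $\langle\cdot,\cdot\rangle$, decompose $u_n = u_n^+ + u_n^-$ and note $v_n \in X^-$. Then
\[
\cB_k(w_n) = (1+s_n)^2\|u_n^+\|^2 - \|(1+s_n)u_n^- + v_n\|^2.
\]
Expanding the negative term as $(1+s_n)^2\|u_n^-\|^2 + 2(1+s_n)\langle u_n^-, v_n\rangle + \|v_n\|^2$ and comparing with $\cB_k(u_n) = \|u_n^+\|^2 - \|u_n^-\|^2$, the difference $\frac12\cB_k(w_n) - \frac12\cB_k(u_n)$ equals
\[
\tfrac12\bigl((1+s_n)^2 - 1\bigr)\bigl(\|u_n^+\|^2 - \|u_n^-\|^2\bigr) - (1+s_n)\langle u_n^-, v_n\rangle - \tfrac12\|v_n\|^2.
\]
The last two terms are bounded by $C\kappa\|u_n^-\| + \tfrac12\kappa^2$; since $\|u_n^-\|$ is bounded along a Cerami sequence (a standard fact for strongly indefinite functionals, or proved directly by testing $\Phi'(u_n)$ against $u_n^-$ and using $\varphi \ge 0$), this contributes to $\tilde C + o(1)$. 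The crucial term is $\tfrac12 s_n(s_n+2)(\|u_n^+\|^2 - \|u_n^-\|^2)$, which I rewrite using $\|u_n^+\|^2 - \|u_n^-\|^2 = \cB_k(u_n) = 2\Phi(u_n) + 2\varphi(u_n)$; the $s_n(s_n+2)\Phi(u_n)$ piece stays, and the $s_n(s_n+2)\varphi(u_n)$ piece is designed to cancel against the nonlinear difference.

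Next I would assemble the nonlinear part. We have $\varphi(w_n) - \varphi(u_n) = \int_{B_R}\bigl(F(x, (1+s_n)u_n + v_n) - F(x, u_n)\bigr)\,dx$, so
\[
\Phi(w_n) - \Phi(u_n) = \bigl(1 + s_n(s_n+2)\bigr)\cdot 0 \cdots
\]
— more precisely, collecting everything, $\Phi(w_n) - \Phi(u_n)$ equals $s_n(s_n+2)\Phi(u_n)$ plus $\int_{B_R}\bigl[s_n(\tfrac{s_n}{2}+1)\,f(x,u_n)\,u_n \cdot(\text{coefficient from }\varphi) - \cdots\bigr]$; the point is that the integrand is bounded above, after including the $\langle\Phi'(u_n), \text{(something)}\rangle$ correction, by the quantity controlled in Lemma~\ref{lem1extra02} together with a term of the form $\langle\Phi'(u_n), s_n(\tfrac{s_n}{2}+1)u_n + (1+s_n)v_n\rangle$. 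This last inner product is $o(1)$ because $(1+\|u_n\|)\|\Phi'(u_n)\| \to 0$ while $\|s_n(\tfrac{s_n}{2}+1)u_n + (1+s_n)v_n\| \le C(\kappa)(1 + \|u_n\|)$. Since $\Phi(u_n) \to c$, the term $s_n(s_n+2)\Phi(u_n)$ is bounded by a constant depending on $\kappa$ and $c$ (here $c$ is not a free parameter, but along a fixed Cerami sequence $\Phi(u_n)$ is bounded, so this is harmless); absorbing it into $\tilde C$ completes the estimate.

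\emph{Main obstacle.} The delicate point is the bookkeeping that makes the nonlinear contribution match the algebraic identity \eqref{eq:14}: one must choose exactly the right test direction for $\Phi'(u_n)$ so that, after subtracting $\langle\Phi'(u_n), \cdot\rangle = o(1)$, the remaining pointwise integrand is precisely $f(x,u_n)[s_n(\tfrac{s_n}{2}+1)u_n + (1+s_n)v_n] + F(x,u_n) - F(x, (1+s_n)u_n + v_n)$, which Lemma~\ref{lem1extra02} bounds by $C(\kappa)$ pointwise — hence by $C(\kappa)|B_R|$ after integration. Getting the coefficients $s_n(\tfrac{s_n}{2}+1)$ versus $\tfrac12 s_n(s_n+2)$ to line up (they are equal) and correctly tracking which quadratic cross-terms are genuinely bounded versus which need the boundedness of $\|u_n^-\|$ is where care is required; everything else is routine Sobolev-embedding estimation.
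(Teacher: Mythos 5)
Your high-level plan is the right one: the reparametrization $s_n = r_n - 1$, the test direction $\phi_n := s_n(\tfrac{s_n}{2}+1)u_n+(1+s_n)v_n$ for $\Phi'(u_n)$, the use of the Cerami bound $(1+\|u_n\|)\|\Phi'(u_n)\|\to 0$, and the final appeal to Lemma~\ref{lem1extra02} are exactly the ingredients of the paper's proof, and your closing paragraph correctly identifies the integrand you are aiming for. However, the concrete computation you carry out of the quadratic form has a genuine gap, and it comes from a choice that is both unnecessary and circular.

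You split $u_n$ into its $X^\pm$-components and isolate the cross-term $-(1+s_n)\langle u_n^-,v_n\rangle$, which you then want to control by $C\kappa\|u_n^-\|$, invoking ``$\|u_n^-\|$ is bounded along a Cerami sequence''. This is not a standard fact here and is not established by the argument you sketch: testing $\Phi'(u_n)$ against $u_n^-$ gives $\|u_n^-\|^2 = o(1) - \int_{B_R} f(x,u_n)u_n^-\,dx$, and the last integral is only controlled by $(1+\|u_n\|^{p-1})\|u_n^-\|$ via $(f_1)$, which is useless if $\|u_n\|\to\infty$; the hypothesis $\varphi\ge 0$ is about the primitive $F$ and does not control $\int f(x,u_n)u_n^-$. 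Worse, this lemma is applied inside the proof of Proposition~\ref{prop:cerami} precisely while $\|u_n\|\to\infty$ is being assumed for contradiction, so presupposing boundedness of any component of $u_n$ would be circular. The fix is simply not to split: leave the cross-term as $(1+s_n)\cB_k(u_n,v_n)$ and group it with $\tfrac12[(1+s_n)^2-1]\cB_k(u_n)$, so that the whole quadratic contribution (after dropping $-\tfrac12\|v_n\|^2\le 0$) is exactly $\cB_k(u_n,\phi_n)$; this is then replaced by $\int_{B_R} f(x,u_n)\phi_n\,dx + \Phi'(u_n)(\phi_n)$ with $|\Phi'(u_n)(\phi_n)|\le C(\kappa)\|\Phi'(u_n)\|(1+\|u_n\|)=o(1)$, and Lemma~\ref{lem1extra02} finishes the job. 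Also: you do not need $\Phi(u_n)\to c$ anywhere; the paper's proof uses only the derivative bound.

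One more small point. Your worry that Lemma~\ref{lem1extra02} is stated for scalar $v$ and that $\|v_n\|\le\kappa$ in $H^1$ does not give pointwise control is resolved in the paper by observing that $X^-$ is finite dimensional and consists of smooth eigenfunctions, so elliptic regularity gives $\|v\|_{L^\infty(B_R)}\le K(\kappa)$ uniformly on $\{v\in X^-:\|v\|\le\kappa\}$; then Lemma~\ref{lem1extra02} applies pointwise with $v=v_n(x)$. Your alternative — apply \eqref{eq:14} pointwise for the $f_1$-part (which needs no bound on $v$) and bound the remaining $f_2$-terms, in particular $\int |f_2(x,u_n)(1+s_n)v_n|\,dx\le \|f_2\|_\infty(K+1)\|v_n\|_{L^1}$, via H\"older — does work and is a valid, if slightly heavier, substitute. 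The substantive error is the $\|u_n^-\|$ claim; everything else is either correct or a workable variant.
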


\begin{proof}
We first note that, by a standard bootstrap argument using elliptic
regularity theory, there exists a
constant $K=K(\kappa)>\kappa$ such that 
$$
\|v\|_{L^\infty(B_R)} \le K \qquad \text{for every $v \in X^-$ with $\|v\| \le \kappa.$}
$$
We write $r_n = 1+s_n$ with $-1 \le s_n \le
\kappa-1 \le K$ and set $w_n=r_n u_n+v_n= (1+s_n) u_n +v_n$ for $n \in
\N$. Then 
\begin{align*}
&\Phi (w_n)-\Phi(u_n) = \frac{1}{2}[\cB_k(w_n)-\cB_k(u_n)]+\int_{B_R} (F(x,u_n)-F(x,w_n)) \,dx\\
&\quad = \frac{1}{2}\Bigl([(1+s_n)^2-1]\cB_k(u_n)+2(1+s_n)\cB_k(u_n,v_n)+\cB_k(v_n)\Bigr) \\
&\qquad\qquad + \int_{B_R} (F(x,u_n)-F(x,w_n)) \,dx\\
&\quad= -\frac{\|v_n\|^2}{2}+\cB_k\left(u_n,s_n(\frac{s_n}{2}+1)u_n+(1+s_n)v_n\right)+\int_{B_R}(F(x,u_n)-F(x,w_n)) \,dx\\
&\quad\le \cB_k\left(u_n,s_n(\frac{s_n}{2}+1)u_n+(1+s_n)v_n\right)+\int_{B_R}(F(x,u_n)-F(x,w_n)) \,dx
\end{align*}
Since $(u_n)_n$ is a Cerami sequence, $\|v_n\| \le \kappa$ and $|s_n|
\le K+1$ for all $n$, we have 
\begin{align*}
\Bigl|&\cB_k\left(u_n,s_n(\frac{s_n}{2}+1)u_n+(1+s_n)v_n\right) - \irn
f(x,u_n)[s_n(\frac{s_n}{2}+1)u_n+(1+s_n)v_n ]\,dx \Bigr|\\
&= \Bigl|\Phi'(u_n)
\Bigl(s_n(\frac{s_n}{2}+1)u_n+(1+s_n)v_n\Bigr)\Bigr|\\
&\le c_1 \|\Phi'(u_n)\| \|u_n\| +c_2 \|\Phi'(u_n)\| \|v_n\|= o(1)  
\end{align*}
as $n \to \infty$ with constants $c_1,c_2>0$ (depending on $K$). Consequently, 
\begin{align*}
\Phi(w_n)-\Phi(u_n) \le
\int_{B_R}\Bigl(f(x,u_n)[s_n(\frac{s_n}{2}+1)u_n&+(1+s_n)v_n]\\
&+F(x,u_n)-F(x,w_n)\Bigr)\,dx + o(1).    
\end{align*}
Choosing $C=C(K)$ as in Lemma~\ref{lem1extra02}, we conclude that 
$$
\Phi(w_n)-\Phi(u_n) \le |B_R|C + o(1) \qquad \text{as $n \to
  \infty$.}
$$
Hence the assertion follows with $\tilde C= |B_R|C +1.$
\end{proof}

We may now complete the
\begin{proof}[Proof of Proposition~\ref{prop:cerami}]
Let $(u_n)_n$ be a sequence with the assumed properties. We
first show that $(u_n)_n$ is bounded in $X$. Assuming by contradiction
that this is false, we may pass to a subsequence --- still denoted by $(u_n)_n$ --- 
such that $\|u_n\| \to\infty$
as $n\to\infty$.  Setting $w_n:=\frac{u_n}{\|u_n\|}$, we may assume,
passing again to a subsequence, that  
$w_n\weak w$ weakly in $X$ for some $w\in X$, $w_n\to w$ in
$L^q(B_R)$ for all $1\leq q<2^\ast$, and $w_n \to w$ pointwise a.e. on
$B_R$ as $n\to\infty$. Moreover, we have $w_n^+ \weak w^+$ weakly in
$X$ and $w_n^- \to w^-$ strongly in $X$ as $n \to \infty$, since $X^-$
is finite-dimensional. Passing to a further subsequence, we may also assume that 
$\|w_n^+\| \to s \ge 0$ as $n \to \infty$ for some $s \ge \|w^+\|$. Since
$$
o(1)=\frac{\Phi(u_n)}{\|u_n\|^2} \le \frac{1}{2}\Bigl(\|w_n^+\|^2-\|w_n^-\|^2\Bigr),
$$
by \eqref{eq:phi}, we find that 
\begin{equation}
  \label{eq:8}
\|w^-\| \le s.  
\end{equation}
Hence, $1=\|w_n\|^2=\|w_n^+\|^2+\|w_n^-\|^2\to s^2+\|w^-\|^2\le 2s^2$ as $n\to\infty$, 
indicating that $s>0$. Next we suppose by contradiction that 
\begin{equation}
  \label{eq:11}
w \equiv 0 \qquad \text{a.e. on $\Omega$}
\end{equation}
which implies that 
\begin{equation}
  \label{eq:7}
\varphi(t w_n) \to 0 \qquad \text{as $n \to \infty$ for all $t>0$.}  
\end{equation}
We claim that there exist $t>0$ and $v_n \in X^-$, $n \in
\N$ with 
\begin{equation}
  \label{eq:9}
\|v_n\| \le 1 \quad \text{and}\quad \Phi(t w_n +v_n) > c+\tilde
C + 1 \qquad \text{for $n$ sufficiently large,}  
\end{equation}
where $\tilde
C= \tilde C(1)$ is chosen as in Lemma~\ref{sec:proof-theor-refthm:1-1}
corresponding to $\kappa=1$. To prove this, we have to distinguish different cases. We first note that 
$$
\Phi(tw_n)= \frac{t^2}{2} \Bigl(s^2 - \|w^-\|^2\Bigr) +o(1) \quad\text{as }n\to\infty
$$
for every $t>0$ by \eqref{eq:7}. Hence, if $\|w^-\|<s$, we can find $t>0$ such that 
$$
\Phi(t w_n) > c+\tilde C+1 \qquad \text{for $n$ sufficiently large,}
$$
and therefore \eqref{eq:9} follows with $v_n=0$ for every $n$.\\
Next we consider the remaining case $\|w^-\|=s$, and we note that for
every $t>0$ we have $tw_n +w_n^+ \to t w +w^+$ in $L^q(B_R)$
for all $1\leq q<2^\ast$ and also pointwise a.e. on $B_R$. Hence 
\begin{equation}
  \label{eq:7bis}
\varphi(t w_n +w_n^+) \to \varphi(t w +w^+)= \varphi(w^+) \qquad \text{as $n \to \infty$} 
\end{equation}
for all $t>0$ by \eqref{eq:11}. Consequently, 
$$
\Phi(tw_n +w_n^+)= \frac{1}{2}[s^2(t+1)^2 - t^2s^2]-\varphi(w^+)
+o(1)=s^2\left(t+\frac{1}{2}\right) -\varphi(w^+)+o(1),
$$
so that there exists $t>0$ such that 
$$
\Phi(tw_n +w_n^+)>c+\tilde C+1 \qquad \text{for $n$ sufficiently large.}
$$
Again, \eqref{eq:9} follows with $t+1$ in place of $t$ and $v_n=-w_n^-$, 
since $tw_n +w_n^+= (t+1)w_n - w_n^-$ for every $n$. 
Next, fixing $t>0$ and $v_n$, $n \in \N$ such that \eqref{eq:9} holds,
we write $t w_n + v_n= s_n u_n + v_n$ with $s_n=\frac{t}{\|u_n\|}$ for every $n$, 
so that $0<s_n \le 1$ for large $n$
and $\|v_n\| \le \|w_n\| = 1$. By Lemma~\ref{sec:proof-theor-refthm:1-1}, we therefore have  
$$
\Phi(t w_n + v_n)=\Phi(s_n u_n +v_n) \le  \Phi(u_n) +\tilde C +o(1)
$$
as $n \to \infty$, which contradicts \eqref{eq:9}. The contradiction shows that \eqref{eq:11} is false, 
and therefore the set $\Omega_w:=\{x\in \Omega\, :\, w(x)\neq 0\}$ has positive measure. Moreover, 
$$
\text{$|u_n(x)|=\|u_n\| \, |w_n(x)|\to +\infty$ as $n\to\infty$ for
  almost every $x\in \Omega_w$.}
$$
Hence, Fatou's Lemma, the $L^2$-convergence $w_n\to w$ in $B_R$ and $(f_3)$ imply
\begin{align*}
o(1)= \frac{\Phi(u_n)}{\|u_n\|^2}\leq \frac12 - \int_{\Omega_w}\frac{F(x,u_n)}{|u_n|^2}\, |w_n|^2\, dx\to -\infty,
\end{align*}
as $n\to\infty$, a contradiction. The contradiction shows that
$(u_n)_n$ is  bounded in $X$. Therefore, we can find a subsequence --- still denoted by $(u_n)_n$ --- and
some $u\in X$ such that $u_n\weak u$ (weakly)  in $X$, 
$u_n\to u$ in $L^q(B_R)$ for all $1\leq q<2^\ast$ and $u_n(x)\to u(x)$ for a.e. $x\in B_R$.
As a consequence of \eqref{eq:standardestimate}, there holds
$$
\int_{B_R} (f(x,u_n)-f(x,u))(u_n-u)\, dx \to 0\quad \text{as }n\to\infty.
$$
Hence,
\begin{align*}
\cB_0(u_n-u,u_n-u)&= (\Phi'(u_n)-\Phi'(u))(u_n-u) + k^2\int_{B_R}(u_n-u)^2\, dx \\
& \qquad +\int_{B_R}(f(x,u_n)-f(x,u))(u_n-u)\, dx\to 0,
\end{align*}
as $n\to\infty$. It follows from Corollary \ref{sec:some-tools-1} that $u_n\to u$ strongly in $X$.
The proof is finished.
\end{proof}

We may now complete the 
\begin{proof}[Proof of Theorem~\ref{thm:1} (Case $N \ge 2$)] 
(i) The existence of a nontrivial solution follows from a variant of the classical 
linking theorem where the Palais--Smale condition is replaced by the Cerami condition 
\cite[Theorem 2.3]{bartolo-benci-fortunato83}. 
To see this, we proceed as follows. Considering the sequence of
eigenfunctions $(e_j)_{j\in\N}$ of \eqref{eq:10} given by
Lemma~\ref{sec:some-tools-2}, we set $u=e_{j^*} \in X^+$ and put 
$$
Q_\rho:=\{tu+v\, :\, v\in X^-, \ \|v\|\leq \rho, \ 0\leq t\leq \rho\}
\qquad \text{for $\rho>0$.}
$$
Note that the sets $Q_\rho$ are contained in the finite dimensional
subspace $W= X^- \oplus \R u \subset X$. Since every function in $W$ is
analytic in $B_R$, $\cZ:=W$ satisfies condition \eqref{eq:19}. Using  
Lemma \ref{lemma-new1} (ii) and the fact that $\Phi$ is nonpositive on
$X^-$ by (\ref{eq:phi}), we thus find that 
$$
 \sup_{\partial Q_\rho}\Phi = 0 \qquad \text{for $\rho>0$ sufficiently
   large.}
$$
According to Lemma~\ref{lemma-new1} (i), we may further choose $\alpha>0$
sufficiently small such that the sets $\Sigma_\alpha$ and $\partial Q_\rho$
link and $\inf\limits_{\Sigma_\alpha}\Phi>0$ (see e.g. \cite{struwe}
or \cite[Section 2]{bartolo-benci-fortunato83} for the notion of linking of sets). Finally,
we have $\sup\limits_{Q_\rho}\Phi<+\infty$ by the compactness of $Q_\rho$. 
Taking Proposition~\ref{prop:cerami} into account, we can apply the linking theorem and 
obtain that $\Phi$ has a nontrivial critical point $\hat u\in X$ such that 
$$
0 < \inf\limits_{\Sigma_\alpha}\Phi \le \Phi(\hat u) \le
\sup\limits_{Q_\rho}\Phi.
$$
(ii) Let us now assume that $f(x,-t)=-f(x,t)$ for all
$(x,t)\in\R^N\times\R$. In this case, a variant of the Fountain
Theorem (see \cite{bartsch93,bartsch-willem93} and \cite[Theorem
3.6]{willem}) yields the existence of a sequence of
pairs $\{\pm u_n\}$, $n \in \N$ of critical points of $\Phi$ such that 
\begin{equation}
  \label{eq:29}
\Phi(u_n) \to \infty \qquad \text{as $n \to \infty$.}  
\end{equation}
More precisely, we use a version of the Fountain Theorem where the
Cerami condition is used in place of the Palais--Smale condition. To
see that such a variant exists, it suffices to
note the validity of a deformation lemma giving rise to Cerami
sequences instead of Palais--Smale sequences. Such a deformation lemma
has already been established in \cite[Theorem 1.3]{bartolo-benci-fortunato83}.
In order to check the other assumptions of the Fountain Theorem, we remark that
$X=\overline{\bigoplus \limits_{j\in\N} \R e_j}$
where $(e_j)_{j\in\N}$ is given by Lemma~\ref{sec:some-tools-2} (i). We set 
$$
X_j=\R e_j ,\quad  Y_j=\bigoplus \limits_{\ell=1}^{j}X_\ell\quad
\text{and}\quad  Z_j=\overline{\bigoplus \limits_{\ell=j}^\infty
  X_\ell}
$$
for $j\in\N$. Since every function in $Y_j$ is analytic, we see from
Lemma \ref{lemma-new1} (ii), applied to $\cZ=Y_j$, that for every $j \in\N$ there exists $\rho_j>0$
such that $\Phi(u)\leq 0$ for $u\in Y_j$ with $\|u\|\geq\rho_j$. It
only remains to check that for some sequence $(r_j)_j\subset(0,\infty)$
\begin{equation}
  \label{eq:23}
\text{$\inf\{\Phi(u)\, :\, u\in Z_j,\ \|u\|=r_j\}\to\infty$ as $j \to \infty$.}
\end{equation}
This will be proved by similar arguments as in
\cite[Theorem 3.7]{willem}. Indeed, if $j \ge j^*$, then $Z_j\subset
X^+$ and therefore, by \eqref{eq:standardestimate},
\begin{align}
\Phi(u)=\frac12\|u\|^2-\int_{B_R}F(x,u)\, dx&\geq
\frac12\|u\|^2-\frac{\eps}{2}\|u\|_{L^2(B_R)}^2-\frac{C_\eps}{p}\|u\|_{L^p(B_R)}^p
\nonumber \\
&\geq \frac{1}{4}\|u\|^2 -\frac{C_\eps}{p}\beta_j^p\|u\|^p \qquad
\text{for $u \in Z_j$}, \label{eq:A3_Fountain}
\end{align}
where 
$$
\eps=\frac12\inf\limits_{u\in
  X\backslash\{0\}}\frac{\|u\|^2}{\|u\|_{L^2(B_R)}^2}>0\quad
\text{and}\quad \beta_j:=\sup \{\|u\|_{L^p(B_R)}\::\: u\in
  Z_j,\:\|u\|=1\}.
$$ 
Since $(\beta_j)_j\subset[0,\infty)$ is a 
decreasing sequence, $\beta:=\lim\limits_{j\to\infty}\beta_j$ exists.
Moreover, for each $j$ we can find some $u_j\in Z_j$ such that
$\|u_j\|=1$ and $\beta_j\ge 
\|u_j\|_{L^p(B_R)}> \frac{\beta_j}{2}$. From the definition of $Z_j$ we obtain that 
$u_j\weak 0$ (weakly) in $X$ and the compact Sobolev embedding $X\embed L^p(B_R)$ then gives $u_j\to 0$
in $L^p(B_R)$. Thus, $\beta_j\to 0$ as $j\to\infty$. Choosing $r_j=(2C_\eps\beta_j^p)^{\frac{1}{2-p}}$, 
we obtain from \eqref{eq:A3_Fountain} that
$$
\Phi(u)\geq r_j^2(\frac14-\frac{1}{2p})\qquad \text{for all $u\in Z_j$
  with $\|u\|=r_j$.}
$$
Since $r_j\to\infty$ as $j\to\infty$,
the assertion follows. Moreover, since $\Phi$ is bounded on bounded
subsets of $X=H^1(B_R)$, \eqref{eq:29} implies that
$\|u_n\|_{H^1(B_R)} \to \infty$ as $n \to \infty$, as claimed in (\ref{eq:28}).\\
To conclude the proof of Theorem \ref{thm:1}, we remark that if $u$ is a critical point of $\Phi$, then the restriction 
of $u$ to $S_R$ belongs to $H^{\frac12}(S_R)$ and there exists a unique weak solution $w$ of \eqref{eq:2}. 
Therefore, extending $u$ on $\R^N$ by setting $u \equiv \text{Re}(w)$
on $E_R$, we see that $u \in \cR_R$ is a solution of \eqref{eq:33}.
\end{proof}

We close this section with an observation on a rigid minimax
characterization of the ground state energy
level (i.e., the least energy of a nontrivial critical point of
$\Phi$) in the case where $(f_4)$ is replaced by a stronger condition.

\begin{theorem}
\label{sec:proof-theor-refthm:1-2}
Suppose that the nonlinearity $f$ satisfies $(f_1)-(f_3)$ and the
following stronger version of $(f_4)$: For every $x \in \R^N$,
\begin{equation}
  \label{eq:13-1}
\text{$u \mapsto \frac{f(x,u)}{|u|}$ is nondecreasing on
  $\R\setminus\{0\}$.}    
\end{equation}
Then the ground state energy level 
$$
c:= \inf \{\Phi(u)\,:\, \text{$u$ nontrivial critical point of
  $\Phi$}\}
$$
is positive and equivalently given by 
$$
c= \inf_{u \in X^+ \setminus \{0\}}\: \sup_{t \ge 0, v \in X^-}\Phi(tu+v)
$$
Moreover, $c$ is attained within the set of nontrivial critical points
of $\Phi$, i.e., within the set of nontrivial solutions of \eqref{eq:33_i}.   
\end{theorem}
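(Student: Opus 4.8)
The plan is to derive everything from two facts already in hand: the convexity-type inequality \eqref{eq:14} and the Cerami condition of Proposition~\ref{prop:cerami}. First I would record the consequences of the strengthened hypothesis \eqref{eq:13-1}. Since $f$ now satisfies it, the inequality \eqref{eq:14} holds verbatim with $f_1,F_1$ replaced by $f,F$ (its proof uses only the monotonicity of $u\mapsto f(x,u)/|u|$). Moreover \eqref{eq:13-1} together with $(f_2)$ forces $f(x,t)t\ge0$ for all $x,t$ — hence $\varphi'(v)v=\int_{B_R}f(x,v)v\,dx\ge0$ on $X$ — and, a fortiori, $(f_4)$ holds with any $s_0>0$. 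Since $f$ also satisfies $(f_0)$ (in force throughout), Theorem~\ref{thm:1}(i) applies, so $\Phi$ has a nontrivial critical point and in particular $c<\infty$. The useful observation is that the full generalized-Nehari-manifold apparatus can then be bypassed, which is convenient because the vanishing of $f$ on part of $B_R$ would make the associated deformation argument awkward.

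The key step will be the claim that \emph{every nontrivial critical point $w$ of $\Phi$ satisfies $w^+\ne0$ and}
$$\Phi(w)=\sup_{t\ge0,\ v\in X^-}\Phi(tw^++v).$$
For $w^+\ne0$: if $w^+=0$ then $w\in X^-$ (recall $X^0=\{0\}$) and $0=\Phi'(w)w=\cB_k(w,w)-\varphi'(w)w=-\|w\|^2-\int_{B_R}f(x,w)w\,dx\le-\|w\|^2$, so $w=0$. For the identity, fix $s\ge-1$, $v\in X^-$; expanding $\cB_k$ bilinearly and using $\cB_k(v,v)=-\|v\|^2$ exactly as in the proof of Lemma~\ref{sec:proof-theor-refthm:1-1}, then inserting $\cB_k(w,h)=\Phi'(w)h+\int_{B_R}f(x,w)h\,dx$ with $\Phi'(w)=0$, I obtain
$$\Phi((1+s)w+v)-\Phi(w)=\int_{B_R}\Big(f(x,w)\big[s(\tfrac{s}{2}+1)w+(1+s)v\big]+F(x,w)-F(x,(1+s)w+v)\Big)\,dx-\tfrac{\|v\|^2}{2},$$
whose right-hand side is $\le0$ by the transferred inequality \eqref{eq:14}. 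Since $\{(1+s)w+v:\,s\ge-1,\,v\in X^-\}$ equals $\{tw^++v':\,t\ge0,\,v'\in X^-\}$ and equality holds at $s=0,v=0$, the claim follows.

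Granting the claim, I would finish as follows. Writing $\tilde c:=\inf_{u\in X^+\setminus\{0\}}\sup_{t\ge0,v\in X^-}\Phi(tu+v)$ and taking the infimum over nontrivial critical points $w$ (each with $w^+\in X^+\setminus\{0\}$) gives $c=\inf_w\Phi(w)=\inf_w\sup_{t\ge0,v\in X^-}\Phi(tw^++v)\ge\tilde c$. Positivity: for $u\in X^+\setminus\{0\}$ and $\alpha\in(0,\alpha_0)$ the point $\alpha u/\|u\|$ is of the form $tu+v$ with $v=0$, so $\sup_{t\ge0,v\in X^-}\Phi(tu+v)\ge\inf_{\Sigma_\alpha}\Phi>0$ by Lemma~\ref{lemma-new1}(i); hence $\tilde c>0$ and $c\ge\tilde c>0$. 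Attainment: pick nontrivial critical points $w_n$ with $\Phi(w_n)\to c$; since $\Phi'(w_n)=0$, $(w_n)_n$ is a Cerami sequence at level $c$, so by Proposition~\ref{prop:cerami} a subsequence converges in $X$ to some $w_0$, which by continuity of $\Phi$ and $\Phi'$ is a critical point with $\Phi(w_0)=c>0=\Phi(0)$, hence nontrivial. Finally, the claim applied to $w_0$ yields $\tilde c\le\sup_{t\ge0,v\in X^-}\Phi(tw_0^++v)=\Phi(w_0)=c$, so $c=\tilde c$ and $c$ is attained, as asserted.

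I expect no serious obstacle beyond those already surmounted: the displayed identity in the claim is just the computation from the proof of Lemma~\ref{sec:proof-theor-refthm:1-1} combined with $\Phi'(w)=0$, and the rest is bookkeeping resting on Lemma~\ref{lemma-new1} and, crucially, on Proposition~\ref{prop:cerami} — which is precisely where the difficulty caused by the vanishing of $f$ on a subset of $B_R$ of positive measure was handled. The one point worth double-checking is that the suprema in the statement are finite for the $u$ that actually enter the argument (namely $u=w^+$ for critical points $w$), which is automatic since there the supremum equals $\Phi(w)<\infty$; this is why the more elaborate route through the generalized Nehari manifold and its deformation lemma — which would force one to revisit the degeneracy set of $f$ — can be avoided.
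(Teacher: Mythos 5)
Your claim, its proof via the identity
$$\Phi((1+s)w+v)-\Phi(w)=\int_{B_R}\Bigl(f(x,w)[s(\tfrac{s}{2}+1)w+(1+s)v]+F(x,w)-F(x,(1+s)w+v)\Bigr)\,dx-\tfrac{\|v\|^2}{2}$$
together with the transferred inequality \eqref{eq:14}, the positivity via Lemma~\ref{lemma-new1}(i), and the attainment via Proposition~\ref{prop:cerami} are all correct and in line with the paper. But there is a genuine gap at the very end: you have proved only one of the two inequalities $c\ge\tilde c$ and $c\le\tilde c$, and you have proved it twice. ``Taking the infimum over nontrivial critical points'' yields $c\ge\tilde c$; ``the claim applied to $w_0$'' yields $\tilde c\le\sup_{t\ge0,v\in X^-}\Phi(tw_0^++v)=c$, which is again $c\ge\tilde c$. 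You never establish $c\le\tilde c$, i.e.\ that $\sup_{t\ge0,v\in X^-}\Phi(tu+v)\ge c$ for \emph{every} $u\in X^+\setminus\{0\}$, not just for those of the form $u=w^+$ with $w$ a critical point. Since $\tilde c$ is an infimum over all of $X^+\setminus\{0\}$, there is a priori nothing preventing some $u$ unrelated to any critical point from giving a strictly smaller supremum, and your route provides no way to rule this out.

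This is precisely the direction that cannot be bypassed: in the paper it is handled by the linking argument. Given $u\in X^+\setminus\{0\}$ with $\sup_{t\ge0,v\in X^-}\Phi(tu+v)<\infty$, one verifies that the closed cone $\cZ=\{tu+v:t\ge0,v\in X^-\}$ satisfies condition \eqref{eq:19} (this uses the finiteness of the supremum to derive a contradiction from the assumption that some admissible $w\in\cZ$ vanishes a.e.\ on $\Omega$), invokes Lemma~\ref{lemma-new1}(ii) to obtain $\rho>0$ with $\sup_{\partial Q_\rho}\Phi=0$, and then applies the linking theorem \cite[Theorem 2.3]{bartolo-benci-fortunato83} to produce a nontrivial critical point $\hat u$ with $\Phi(\hat u)\le\max_{Q_\rho}\Phi\le\sup_{t\ge0,v\in X^-}\Phi(tu+v)$; taking the infimum over $u$ gives $c\le\tilde c$. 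So the ``more elaborate route'' that you explicitly set aside is in fact indispensable, and the degeneracy set of $f$ (the subset of $B_R$ on which $f$ vanishes) has to be revisited at exactly this point when verifying \eqref{eq:19} for $\cZ$.
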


\begin{proof}
Let $u$ be a nontrivial critical point of $\Phi$. Then $u \not \in
X^-$, since otherwise
$$
\int_{B_R}f(x,u)u\,dx = \cB_k(u) = -\|u^-\|^2 <0,
$$
contrary to \eqref{eq:13-1} and $(f_2)$. Next we claim that 
\begin{equation}
\label{eq:15}
\Phi(u) \ge \Phi(w) \qquad \text{for every $w \in X^-\! + \R^+ u\,=\, X^-\! + \R^+ u^+$.}
\end{equation}
Indeed, similar to the proof of
Lemma~\ref{sec:proof-theor-refthm:1-1} we have, for $w=(1+s)u+v$
with $s \ge -1$ and $v \in X^-$,  
\begin{align*}
\Phi (w)-\Phi(u) &\le \cB_k(u,s(\frac{s}{2}+1)u+(1+s)v)+
\int_{B_R}(F(x,u)-F(x,w)) \,dx\\
&= \int_{B_R}\Bigl(f(x,u)[s(\frac{s}{2}+1)u+(1+s)v]+F(x,u)-F(x,w)\Bigr) \,dx. 
\end{align*}
Moreover, the last integral is nonpositive since, as a consequence of
\eqref{eq:13-1},  the inequality \eqref{eq:14} holds for
$f$ in place of $f_1$. Hence \eqref{eq:15} is true. Putting 
$$
c^*= \inf_{u \in X^+ \setminus \{0\}}\: \sup_{t \ge 0, v \in
  X^-}\Phi(tu+v),
$$
we thus infer that $c \ge c^*$, whereas Lemma
\ref{lemma-new1} (i) implies that $c^* \ge \inf
\limits_{\Sigma_\alpha}\Phi>0$ for some $\alpha>0$, where
$\Sigma_\alpha:= \{u \in X^+\,:\,\|u\|=\alpha\}$.\\ 
Furthermore, $c$ is attained among nontrivial
critical points of $\Phi$. Indeed, if
$(u_n)_n$ is a sequence of critical points of $\Phi$ with $\Phi(u_n)
\to c$, then by Proposition~\ref{prop:cerami} we have $u_n \to u_0$ in
$X$ for a subsequence, where $u_0$ is a critical point of $\Phi$ with
$\Phi(u_0)=c>0$, and therefore $u_0 \not=0$.\\
It remains to show that $c \le c^*$. For this, let $u \in X^+
\setminus \{0\}$ be such
that 
\begin{equation}
  \label{eq:22}
\sup_{t \ge 0, v \in X^-}\Phi(tu+v)<\infty.
\end{equation}
Let $\cZ:= \{tu +v\::\: v \in X^-,\: t \ge 0\}$. We claim that condition
\eqref{eq:19} holds for this set $\cZ$. Suppose by contradiction
that this is false, i.e., there exists $w \in \cZ
\setminus \{0\}$ with $\|w^+\| \ge \|w^-\|$ and such that $w \equiv 0$
a.e. on $\Omega$. Since $\|w^+\|>0$, we then find  
\begin{align*}
\Phi(tw+w^+)&= \frac{1}{2}
\Bigl((t+1)^2\|w^+\|^2-t^2\|w^-\|^2\Bigr)- \varphi(w^+)\\
&\ge(t+\frac{1}{2}) \|w^+\|^2- \varphi(w^+) \to \infty,
\end{align*}
as $t \to \infty$. This contradicts \eqref{eq:22}, since $tw + w^+=
(t+1)w^++tw^- \in \cZ$ for every $t>0$. The contradiction
shows that condition
\eqref{eq:19} holds for $\cZ$. Since $\cZ$ is a closed cone contained
in the finite dimensional space $X^- \oplus \R u$, Lemma~\ref{lemma-new1} (ii)
implies that there exists $\rho>0$ with 
\begin{equation}
  \label{eq:20}
 \sup_{\partial Q_\rho}\Phi = 0 \qquad \quad \text{where }\quad Q_{\rho}:=\{tu+v\, :\,
 v\in X^-, \ \|v\| \leq \rho, \ 0\leq t\leq \rho\}.
\end{equation}
Applying the linking theorem \cite[Theorem 2.3]{bartolo-benci-fortunato83} exactly as in
the proof of Theorem~\ref{thm:1} with the present choice of $Q_\rho$, we obtain a 
nontrivial critical point $\hat u$ of $\Phi$ with 
$$
\Phi(\hat u) \le \max_{Q_\rho} \Phi \le \sup_{t \ge
  0,v \in X^-}\Phi(tu+v).
$$
We thus conclude that $c \le c^*$, so equality holds
and the proof is finished.
\end{proof}

\section{The one-dimensional case}\label{sec-remarks}
In this section we sketch the proof of Theorem~\ref{thm:1} in the case
$N=1$ which has to be treated slightly differently.
Note that for $R>0$ we have $B_R=(-R,R)$, $S_R:= \{-R,R\}$ and
$E_R=(-\infty,-R)\cup(R,+\infty)$, and consider the space $H_R^\C$
as defined in \eqref{eq:1}. For a given function $u:S_R \to \C$, it is
easy to see that the unique solution $w \in H_R^\C$ of the problem  
\begin{equation}
\left\{  
\begin{aligned}
w'' +k^2 w&=0 &&\qquad \text{in $E_R$}, \\ 
w&=u  && \qquad \text{on $S_R$}     
  \end{aligned}
\right.
\end{equation}
is then given by 
$$
w(x)=\left\{
  \begin{aligned}
&u(-R) e^{-ik(x+R)}&&\qquad  \text{for }x\leq -R,\\
&u(R) e^{ik(x-R)} &&\qquad \text{for }x\geq R.
  \end{aligned}
\right.
$$
Hence the capacity operator $T_R$ is simply given by $[T_R u](R)= ik u(R)$ and \\ 
$[T_R u](-R)= ik u(-R)$ for any function $u:S_R \to \C$, and thus its real
part $K_R$, as defined in \eqref{eq:27}, is zero. In particular, the second inequality in
\eqref{eq:5} is not true if $N=1$, and also Corollary~\ref{sec:some-tools-1}
does not hold in this case. On the other hand, the quadratic form 
$\cB_k$ defined in \eqref{eq:25} is simply given by 
$$
\cB_k(u,v)= \int_{-R}^R (u' v' -k^2 u v)\,dx, \qquad u,v \in H^1(-R,R),
$$
and \eqref{eq:10} reduces to the homogeneous Neumann
eigenvalue problem     
\begin{equation}
  \label{eq:26}
-u'' = \lambda u \quad \text{in $(-R,R)$,}\qquad u'(\pm R)=0.
\end{equation}
As a consequence, the properties listed in
Lemma~\ref{sec:some-tools-2} are well known and easy to check in the
case $N=1$. Furthermore, the inner problem \eqref{eq:33_i} reduces to 
\begin{equation}\label{eq:33_ii}
\left\{
  \begin{aligned}
    -u'' -k^2 u&=f(x,u) &&\qquad \text{in $(-R,R)$}, \\
    u'(\pm R)&=0,
  \end{aligned}
\right.
\end{equation}
and solutions of \eqref{eq:33_ii} correspond to
critical points of the functional 
$$
\Phi: H^1(-R,R) \to \R, \qquad \Phi(u)= \frac12 \int_{-R}^R \Bigl((u')^2 -k^2 u^2\Bigr)\, dx 
- \int_{-R}^R F(x,u)\, dx.
$$
Since the structure of $\Phi$ is the same as in the multidimensional case, the proof 
of Theorem~\ref{thm:1} can be finished exactly
as detailed in Section~\ref{sec-thm-1}. In particular, the
one-dimensional analogues of Lemma~\ref{lemma-new1} and
Proposition~\ref{prop:cerami} are valid.

In addition, we remark that, by exactly the same proof,
Theorem~\ref{sec:proof-theor-refthm:1-2} is also true in the
one-dimensional case. 

\section{Existence of radial solutions}\label{sec-radial}
For $N\geq 2$, we look for radially symmetric solutions of
\begin{equation}\label{eqn:3.1}
	-\Delta u -k^2 u = f(|x|,u), \qquad x\in\R^N,
\end{equation}
where $k>0$, and $f$ satisfies the following assumptions:
\begin{itemize}
	\item[$(g_1)$] $f\in\cC([0,\infty)\times\R)$ with $f(\cdot,0)=0$, and $f$ is 
	locally Lipschitz continuous with respect to $u\in\R$.
	\item[$(g_2)$] $0\leq F(r,u)\leq \frac{1}{2}f(r,u)u$ for all $(r,u)\in[0,\infty)\times\R$, 
	where $F(r,u)=\int\limits_0^u f(r,s)\, ds$.
	\item[$(g_3)$] For every $u\in\R$, $F(\cdot,u)\in\cC^1(0,\infty)$ with $\partial_r F(r,u)\leq 0$ for all $r>0$.
\end{itemize}
Setting $u(x)=u(r)$ with $r=|x|$, we can rewrite \eqref{eqn:3.1} as
\begin{equation}\label{eqn:3.2}
	-u'' -\frac{N-1}{r}u' -k^2 u = f(r,u), \quad r>0
\end{equation}
and we shall look for solutions of class $\cC^2$ of this equation which satisfy $u'(0)=0$. 

The following local existence and uniqueness result (in a singular setting) is well known, see
e.g. \cite[Lemma 3.1]{weth05} for a proof. 

\begin{lemma}\label{lem:exist_eps}
For every $\alpha\in\R$, there exists $\eps>0$ such that the initial value problem associated with \eqref{eqn:3.2}, 
$u(0)=\alpha$ and $u'(0)=0$ has a
unique solution $u\in\cC^2([0,\eps),\R)$.
\end{lemma}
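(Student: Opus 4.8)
The plan is to recast the singular initial value problem as a fixed point equation and solve it by contraction on a short interval. Writing the left-hand side of \eqref{eqn:3.2} in divergence form, $u''+\frac{N-1}{r}u' = r^{1-N}(r^{N-1}u')'$, and integrating twice starting at $r=0$ while using $u(0)=\alpha$ and $u'(0)=0$, one checks that a function $u\in\cC^2([0,\eps),\R)$ solves \eqref{eqn:3.2} with $u(0)=\alpha$, $u'(0)=0$ if and only if it solves the integral equation $u=T_\alpha u$, where
\begin{equation*}
(T_\alpha u)(r):=\alpha-\int_0^r t^{1-N}\int_0^t s^{N-1}\bigl(k^2u(s)+f(s,u(s))\bigr)\,ds\,dt .
\end{equation*}
Here the ``only if'' direction uses that, since $u'$ is continuous and $N\ge 2$, the boundary term $r^{N-1}u'(r)$ vanishes at $r=0$, and that the iterated integrand is $O(t)$ near $0$, hence integrable. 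The key quantitative fact is the elementary identity $\int_0^r t^{1-N}\int_0^t s^{N-1}\,ds\,dt=\frac{r^2}{2N}$, which shows that $T_\alpha$ is well defined on $\cC([0,\eps],\R)$ despite the singular weight $r^{1-N}$, and that its ``size'' scales like $\eps^2$.

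First I would fix $\delta>0$ and $\eps\le 1$ and work in the closed ball $B:=\{u\in\cC([0,\eps],\R):\norm{u-\alpha}_\infty\le\delta\}$. By $(g_1)$, $f$ is continuous, hence bounded by some $M$ on the compact set $[0,1]\times[\alpha-\delta,\alpha+\delta]$, and locally Lipschitz in $u$ there with some constant $L$. For $u\in B$ the identity above gives $\norm{T_\alpha u-\alpha}_\infty\le\bigl(k^2(\abs{\alpha}+\delta)+M\bigr)\frac{\eps^2}{2N}$, which is $\le\delta$ for $\eps$ small; and for $u,v\in B$ one similarly gets $\norm{T_\alpha u-T_\alpha v}_\infty\le(k^2+L)\frac{\eps^2}{2N}\norm{u-v}_\infty$. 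Hence, shrinking $\eps$ once more, $T_\alpha$ is a contraction of $B$ into itself, and Banach's fixed point theorem produces a unique $u\in B$ with $u=T_\alpha u$.

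Next I would bootstrap regularity. Since $u\in\cC([0,\eps])$, the map $s\mapsto s^{N-1}(k^2u(s)+f(s,u(s)))$ is continuous, so $r\mapsto r^{N-1}u'(r)=-\int_0^r s^{N-1}(k^2u(s)+f(s,u(s)))\,ds$ is $\cC^1$ and vanishes at $r=0$; dividing by $r^{N-1}$ shows $u\in\cC^1((0,\eps))$ with $u'(r)=O(r)$ as $r\to 0^+$, so in particular $u'(0)=0$, and \eqref{eqn:3.2} holds classically on $(0,\eps)$. Reading $u''$ off from \eqref{eqn:3.2} and letting $r\to 0^+$ shows $u''$ extends continuously to $0$ with $u''(0)=-\frac1N(k^2\alpha+f(0,\alpha))$, so $u\in\cC^2([0,\eps),\R)$. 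Uniqueness among all $\cC^2$ solutions follows since any such solution satisfies $u=T_\alpha u$ and, being continuous with $u(0)=\alpha$, lies in $B$ after further shrinking $\eps$, hence coincides with the fixed point.

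The routine parts are the kernel identity and the two elementary norm estimates; the only point requiring genuine care is the handling of the singular weight $r^{1-N}$ at the origin — namely checking that $T_\alpha$ is well defined on continuous functions and that the bootstrap actually recovers $u'(0)=0$ and $\cC^2$-regularity up to $r=0$, not merely on the punctured interval.
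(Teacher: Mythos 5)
Your proof is correct, and it is the standard contraction-mapping argument for Bessel-type singular initial value problems; the paper itself does not reproduce a proof but cites \cite[Lemma~3.1]{weth05}, which uses essentially this same fixed-point reformulation. The two points that genuinely require care — well-definedness of $T_\alpha$ despite the weight $r^{1-N}$ (via $\int_0^r t^{1-N}\int_0^t s^{N-1}\,ds\,dt = r^2/(2N)$) and the bootstrap recovering $u'(0)=0$ and $\cC^2$-regularity up to $r=0$ — are both handled correctly. The only place you are slightly terse is the uniqueness step: after shrinking $\eps$ so that another $\cC^2$ solution $v$ stays in the ball $B$, you get $u\equiv v$ on a sub-interval $[0,\eps'']$; to conclude $u\equiv v$ on the original $[0,\eps)$ one should invoke standard (non-singular) ODE uniqueness for $r>0$, where the right-hand side is locally Lipschitz. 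This is routine and does not affect the validity of the argument.
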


Using properties $(g_2)$ and $(g_3)$, we now continue the local
solution given by Lemma~\ref{lem:exist_eps} to a global solution on
$\R^+$ and analyze its asymptotic behavior.

\begin{theorem}\label{thm:radial}
Let $f$ satisfy $(g_1)$ to $(g_3)$. Then for every $\alpha\in\R$, there exists a unique radially 
symmetric solution of \eqref{eqn:3.1}, $u=u(|x|)$,
$u\in\cC^2([0,\infty),\R)$ satisfying $u(0)=\alpha$ and
$u'(0)=0$. Moreover, 
\begin{equation}
  \label{eq:24}
\sup_{r \ge 1} \left\{r^{N-1}\bigl(u'(r)^2+u(r)^2\bigr)\right\} <\infty,
 \end{equation}
so that --- identifying $u(x)$ with $u(|x|)$ for $x \in \R^N$ --- we have
$\frac{u}{\sqrt{1+|x|^2}}$, $\frac{|\nabla
  u|}{\sqrt{1+|x|^2}}\in L^2(\R^N)$. If, in addition,
\begin{itemize}
\item[$(g_4)$] $f(r,u)=o(|u|)$ as $u\to 0$, uniformly in $r>0$, 
\end{itemize}
then $u$ satisfies the asymptotic condition \eqref{eqn:radiation2}.
\end{theorem}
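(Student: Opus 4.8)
The proof has three parts: (1) global continuation of the local solution from Lemma~\ref{lem:exist_eps}, (2) the decay estimate \eqref{eq:24}, and (3) under $(g_4)$, verification of the radiation condition \eqref{eqn:radiation2}. I would organize everything around the energy-type functional
$$
E(r) := \frac{1}{2}\bigl(u'(r)^2 + k^2 u(r)^2\bigr) + F(r,u(r)),
$$
whose derivative along a solution of \eqref{eqn:3.2} is
$$
E'(r) = u'\bigl(u'' + k^2 u + f(r,u)\bigr) + \partial_r F(r,u)
= -\frac{N-1}{r}\,u'(r)^2 + \partial_r F(r,u(r)).
$$
By $(g_3)$ the term $\partial_r F(r,u) \le 0$, and $-\frac{N-1}{r}u'^2 \le 0$ for $N\ge 2$, $r>0$, so $E$ is nonincreasing on $(0,\infty)$. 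Together with $F\ge 0$ from $(g_2)$, this gives the a priori bound $u'(r)^2 + k^2 u(r)^2 \le 2E(r) \le 2E(0^+) = k^2\alpha^2 + 2F(0,\alpha)$ for all $r$ in the maximal interval of existence. Hence $u$ and $u'$ stay bounded; since $f$ is continuous and locally Lipschitz in $u$ by $(g_1)$, the standard continuation argument for ODEs (the equation \eqref{eqn:3.2} is regular away from $r=0$, and the solution is already $\cC^2$ up to $0$) shows the solution extends to all of $[0,\infty)$, and uniqueness is inherited from Lemma~\ref{lem:exist_eps} and local uniqueness on $(0,\infty)$.

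\smallskip

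\emph{The decay estimate.} For \eqref{eq:24} I would pass to the Liouville-type substitution $v(r) := r^{\frac{N-1}{2}} u(r)$, which removes the first-order term: a direct computation gives
$$
-v'' - k^2 v = r^{\frac{N-1}{2}} f(r,u) - \frac{(N-1)(N-3)}{4r^2}\, v \qquad (r>0).
$$
Introduce the modified energy
$$
\cE(r) := \frac{1}{2}\bigl(v'(r)^2 + k^2 v(r)^2\bigr),
$$
so that
$$
\cE'(r) = v'\bigl(v'' + k^2 v\bigr)
= -v'(r)\Bigl(r^{\frac{N-1}{2}} f(r,u) - \tfrac{(N-1)(N-3)}{4r^2} v\Bigr).
$$
The idea is that both remainder terms on the right are controlled by already-established bounds: from the first part $|u(r)| \le C$ and $|u'(r)| \le C$, hence $|v(r)| \le C r^{\frac{N-1}{2}}$ and $|v'(r)| \le C r^{\frac{N-1}{2}}$ a priori, which is too weak by itself. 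The honest route is to exploit the true strength of the original energy monotonicity: integrating $E'$ gives $\int_1^\infty \frac{N-1}{r} u'(r)^2\,dr \le E(1) < \infty$, a quantitative $L^2$-type control on $u'$. Combined with $F\ge 0$ one also controls $\int_1^\infty r^{-1}|\partial_r F|\,dr$. I would then run a Gronwall argument on $\cE$: write $\cE'(r) \le |v'(r)|\cdot\bigl(r^{\frac{N-1}{2}}|f(r,u)| + \tfrac{C}{r^2}|v|\bigr)$, bound $r^{\frac{N-1}{2}}|f(r,u)|$ using $(g_2)$ in the form $f(r,u)u \ge 2F(r,u)\ge 0$ together with the pointwise bound on $u$ (so $|f(r,u)|$ is bounded on the relevant range of $u$, locally uniformly in $r$; here I would want a mild additional observation, that $\sup_{r\ge 1,|u|\le C}|f(r,u)|<\infty$, which follows from continuity of $f$ if one also knows $f(r,\cdot)$ is controlled for large $r$ — this is where one uses that the bound on $|u|$ is uniform and $f$ is merely continuous, so some care is needed), and conclude $\cE(r)$ is bounded on $[1,\infty)$, which is exactly \eqref{eq:24}. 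The integrability statements $\frac{u}{\sqrt{1+|x|^2}},\ \frac{|\nabla u|}{\sqrt{1+|x|^2}} \in L^2(\R^N)$ then follow by rewriting the $\R^N$-integral in polar coordinates: $\int_{\R^N} \frac{u^2}{1+|x|^2}\,dx = \omega_{N-1}\int_0^\infty \frac{r^{N-1}u(r)^2}{1+r^2}\,dr$, and \eqref{eq:24} makes the integrand $O(r^{-1}\cdot r^{-1}) \cdot$(bounded) wait — more precisely $r^{N-1}u(r)^2 \le C$ so the integrand is $O((1+r^2)^{-1})$ only after noting $r^{N-1}u(r)^2/(1+r^2) \le C/(1+r^2)$ for $r\ge 1$, which is integrable; the region $r\le 1$ is harmless since $u\in\cC^2([0,1])$.

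\smallskip

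\emph{The radiation condition under $(g_4)$.} Once \eqref{eq:24} is known, $u(r) = O(r^{\frac{1-N}{2}})$ is immediate, giving the first half of \eqref{eqn:radiation2}. For the second half, $\partial_r^2 u + k^2 u = o(r^{\frac{1-N}{2}})$: from \eqref{eqn:3.2}, $u'' + k^2 u = -\frac{N-1}{r}u' - f(r,u)$. By \eqref{eq:24}, $\frac{N-1}{r}|u'(r)| \le \frac{C}{r}\cdot r^{\frac{1-N}{2}} = o(r^{\frac{1-N}{2}})$ since $N\ge 2$. For the nonlinear term, $(g_4)$ gives $|f(r,u(r))| = o(|u(r)|)$ as $u(r)\to 0$ uniformly in $r$; and $u(r)\to 0$ as $r\to\infty$ because $r^{\frac{N-1}{2}}|u(r)|$ is bounded and $N\ge 2$. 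Hence $|f(r,u(r))| = o(|u(r)|) = o(r^{\frac{1-N}{2}})$. Adding the two estimates yields $u'' + k^2 u = o(r^{\frac{1-N}{2}})$, completing \eqref{eqn:radiation2}.

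\smallskip

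\emph{Main obstacle.} The genuinely delicate step is the decay estimate \eqref{eq:24}: the bare energy monotonicity of $E$ gives boundedness of $u,u'$ but \emph{not} the sharp $r^{-(N-1)}$ rate, and naively the substitution to $v = r^{\frac{N-1}{2}}u$ only converts a bounded quantity into one growing like $r^{N-1}$. One must genuinely use the \emph{integrated} dissipation $\int_1^\infty \frac{N-1}{r}u'^2\,dr < \infty$ (plus $\int_1^\infty r^{-1}|\partial_r F|\,dr<\infty$), feed this into a Gronwall-type bound for $\cE(r)$, and separately make sure the nonlinear forcing $r^{\frac{N-1}{2}}f(r,u)$ in the equation for $v$ does not destroy the argument — which is where $(g_2)$ (to bound $|f(r,u)|$ in terms of $F$ and hence keep the range of $u$ in a region where $f$ is bounded) and the uniform-in-$r$ a priori bound on $u$ must be combined. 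I would expect roughly a page of careful estimation here; everything else is routine.
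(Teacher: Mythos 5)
Your overall architecture (local existence from Lemma~\ref{lem:exist_eps}, the substitution $v=r^{\frac{N-1}{2}}u$, a Gronwall bound on $\cE(r)=\tfrac12(v'^2+k^2v^2)$, and the argument for the radiation condition under $(g_4)$) is the same as the paper's, and your treatment of the radiation condition and the $L^2$-integrability statements is correct. The preliminary observation that $E(r)=\tfrac12(u'^2+k^2u^2)+F(r,u)$ is nonincreasing is a nice and valid way to get global existence, though the paper gets it as a byproduct of the decay estimate.

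However, there is a genuine gap in your argument for the decay estimate \eqref{eq:24}, and it is precisely at the place you flag as ``the genuinely delicate step.'' You propose to control
$$
\cE'(r)\le |v'(r)|\Bigl(r^{\frac{N-1}{2}}|f(r,u(r))|+\tfrac{C}{r^2}|v(r)|\Bigr)
$$
via Gronwall, using (i) the integrated dissipation $\int_1^\infty \frac{N-1}{r}u'(r)^2\,dr<\infty$ and (ii) a uniform bound on $|f(r,u(r))|$. This does not close. Even granting $|f(r,u(r))|\le C$ (which is \emph{not} implied by $(g_1)$--$(g_3)$, since $[1,\infty)\times[-C,C]$ is not compact and the hypotheses impose no decay of $f$ in $r$), the forcing term $r^{\frac{N-1}{2}}|f|$ \emph{grows} like $r^{\frac{N-1}{2}}$, and with the only a priori bound $|v'(r)|\lesssim r^{\frac{N-1}{2}}$ the integrand is of size $r^{N-1}$, far from summable. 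The averaged bound (i) on $u'$ is a statement about the wrong quantity at the wrong weight and cannot convert this into a uniform pointwise bound on $\cE$.

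The missing idea is \emph{not} to estimate $\int_\eps^r s^{\frac{N-1}{2}}f(s,u(s))v'(s)\,ds$ in absolute value, but to integrate it by parts exactly: using $v'=\tfrac{N-1}{2}s^{\frac{N-3}{2}}u+s^{\frac{N-1}{2}}u'$ and $u'(s)f(s,u(s))=\frac{d}{ds}F(s,u(s))-\partial_rF(s,u(s))$, one finds
$$
-2\int_\eps^r s^{\frac{N-1}{2}}f v'\,ds
= -2\,s^{N-1}F(s,u(s))\Big|_\eps^r
-(N-1)\int_\eps^r s^{N-2}\bigl(f u-2F\bigr)\,ds
+2\int_\eps^r s^{N-1}\partial_r F\,ds,
$$
and now \emph{every} term is under control without any decay or boundedness assumption on $f$: the boundary term is $\le 2\eps^{N-1}F(\eps,u(\eps))$ since $F\ge0$, the middle integral is $\le0$ by $(g_2)$, and the last integral is $\le0$ by $(g_3)$. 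What remains is only the curvature term $2\int_\eps^r\frac{(N-1)(N-3)}{4s^2}vv'\,ds$, whose kernel $s^{-2}$ is summable, and Gronwall then gives the uniform bound on $\rho(r)=v'^2+k^2v^2$. The hypotheses $(g_2)$ and $(g_3)$ are designed to make precisely this integration by parts sign-definite; they do not, as you suggest, bound $|f|$ in terms of $F$. Replacing the absolute-value estimate in your proposal with this integration by parts is what makes the argument work under the stated hypotheses.

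Two smaller remarks: the theorem allows any $\alpha\in\R$, not just $\alpha>0$ (the argument is unaffected, since the energy identities never use the sign of $\alpha$; the paper's ``Let $\alpha>0$'' is inessential, and $\alpha=0$ gives $u\equiv0$ by uniqueness). Also, your claim $\int_1^\infty r^{-1}|\partial_r F(r,u(r))|\,dr<\infty$ from the $E$-monotonicity is correct, but it is not needed once the integration by parts is in place.
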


\begin{proof}
Let $\alpha>0$. By Lemma \ref{lem:exist_eps}, there exists $\eps>0$
and a unique solution $u\in\cC^2([0,2\eps),\R)$ satisfying
\eqref{eqn:3.2} and $u(0)=\alpha$, $u'(0)=0$. Let $[0,r_0)$, $r_0 \in
[2\eps,\infty]$ denote the maximal existence interval of this
solution. We claim that 
\begin{equation}
  \label{eq:24-1}
\sup_{r \in [\eps,r_0)} r^{N-1}\bigl( u'(r)^2+u(r)^2 \bigr) <\infty,
 \end{equation}
To see this, we use the change of variables 
$$
v(r) = r^{\frac{N-1}{2}} u(r),\quad r \in (0,r_0),
$$
so that $v$ solves 
\begin{equation}\label{eqn:3.5}
-v'' -\{k^2-\frac{(N-1)(N-3)}{4r^2}\}v =
r^{\frac{N-1}{2}}f(r,r^{-\frac{N-1}{2}}v),\quad r \in (0,r_0).
\end{equation}
We consider the $\cC^1$-function $r \mapsto \rho(r):= v'(r)^2 + k^2
v(r)^2$ on $(0,r_0)$ which satisfies
$$
\frac12 \rho'(r) = \frac{(N-1)(N-3)}{4r^2}v(r)v'(r) - r^{\frac{N-1}{2}}f(r,r^{-\frac{N-1}{2}}v(r))v'(r).
$$
For $r \in [\eps,r_0)$, we thus obtain 
\begin{align*}
\rho(r)&= \rho(\eps) + 2\!\!\int_\eps^r \frac{(N-1)(N-3)}{4s^2}v(s)v'(s)\, ds 
 - 2\!\!\int_\eps^r s^{\frac{N-1}{2}}f(s,s^{-\frac{N-1}{2}}v(s))v'(s)\, ds \\
        & = \rho(\eps) + 2\int_\eps^r \frac{(N-1)(N-3)}{4s^2}v(s)v'(s)\, ds 
 - 2 s^{N-1} F(s,s^{-\frac{N-1}{2}}v(s))\Bigr|_\eps^r \\
	& \quad - (N-1)\int_\eps^r s^{N-2}\{ f(s,s^{-\frac{N-1}{2}}v(s))s^{-\frac{N-1}{2}}v(s) 
 - 2F(s,s^{-\frac{N-1}{2}}v(s))\}\, ds \\
	& \qquad + 2\int_\eps^r s^{N-1}\partial_r F(s,s^{-\frac{N-1}{2}}v(s))\, ds \\
	&\leq \rho(\eps) + 2\eps^{N-1}F(\eps,\eps^{\frac{1-N}{2}}v(\eps)) 
 + 2\int_\eps^r \frac{(N-1)(N-3)}{4s^2}v(s)v'(s)\, ds \\
	& \leq \rho(\eps) + 2\eps^{N-1}F(\eps,\eps^{\frac{1-N}{2}}v(\eps)) 
 + \int_\eps^r \frac{(N-1)|N-3|}{4ks^2} \rho(s)\, ds,
\end{align*}
using $(g_2)$ and $(g_3)$.
Hence, Gronwall's inequality (see \cite[Theorem III.1.1]{hartman}) gives
$$
\rho(r) \leq [\rho(\eps) + 2\eps^{N-1}F(\eps,\eps^{\frac{1-N}{2}}v(\eps))] 
e^{\frac{(N-1)|N-3|}{4k\eps^2}} \quad \text{for }r \in [\eps,r_0).
$$
From this we derive \eqref{eq:24-1}, since 
$$
r^{N-1}\bigl(u'(r)^2+u(r)^2\bigr) \le C_\eps \rho(r) \quad
  \text{for $r \in [\eps,r_0)$ with some constant $C_\eps>0$.}
$$
As a consequence of \eqref{eq:24-1}, we find that $r_0= \infty$ and
that (\ref{eq:24}) holds.\\
We finally assume that $(g_4)$ holds, and we check that the radiation condition in
\eqref{eqn:radiation2} is fulfilled. For this we let $r\geq 1$ and write
$$
r^{\frac{N-1}{2}}\left|u''(r)+k^2u(r)\right|=\left|\frac{(N-1)}{r}\!\left(\frac{(N-1)}{2r}v(r) - v'(r)\right) 
- \frac{f(r,r^{-\frac{(N-1)}{2}}v(r))}{r^{-\frac{(N-1)}{2}}v(r)}\, v(r)\right|
$$
$$
\leq \frac{(N-1)^2}{2r^2}\|v\|_\infty+ \frac{(N-1)}{r}\|v'\|_\infty 
+ \left|\frac{f(r,r^{-\frac{(N-1)}{2}}v(r))}{r^{-\frac{(N-1)}{2}}v(r)}\right|\, \|v\|_\infty.
$$
Since $\lim\limits_{r\to\infty}r^{-\frac{(N-1)}{2}}v(r)=0$, the assumption $(g_4)$ gives
$
\lim\limits_{r\to\infty}r^{\frac{N-1}{2}}\left|u''(r)+k^2u(r)\right|=0,
$
i.e., condition \eqref{eqn:radiation2} holds.
\end{proof}

We point out that if $f$ satisfies $(g_1)$ -- $(g_4)$, every solution of \eqref{eqn:3.1} given by 
Theorem \ref{thm:radial} is oscillatory. This follows from the fact that for every such solution $u$, 
$0$ lies in the essential spectrum of any self-adjoint realization of the Sturm--Liouville differential expression
$$
\tau w=-w''-\Bigl(k^2-\frac{(N-1)(N-3)}{r^2}+\frac{f(r,u(r))}{u(r)}\Bigr)w
$$
associated to \eqref{eqn:3.5} (see \cite[Corollary XIII.7.14 and Theorem XIII.7.40]{dunford-schwartz}).

\section{Properties of the capacity operator and its real part}
\label{sec-prop-capac-oper}
 In this section we derive key properties of the operator $T_R$
introduced in Section~\ref{sec-some-tools} and
its real part $K_R$ in the case $N \ge 2$ (see
Section~\ref{sec-remarks} for the case $N=1$). In particular, we will give the proofs of
Lemma~\ref{sec:capacity-operator} and Lemma~\ref{sec:some-tools-2} (iv). 
Part of the material presented in this section should be well known
to experts and appears in the standard literature in the special case
$N=3$ (see e.g. \cite{nedelec}). However we could not find the
corresponding formulas for the general $N$-dimensional case. Moreover,
as already remarked in the introduction, it seems that the real part of the capacity operator has not
been studied in the degree of detail which we need for our
purposes. 

Let $N\geq 2$, and consider the exterior Dirichlet problem for the Helmholtz equation
\begin{equation}
  \label{eq:app1}
\left\{
  \begin{aligned}
\Delta w + k^2 w &= 0 &&\qquad \text{in $E_R:=\R^N\backslash \overline{B_R}$,}\\
w&=u &&\qquad \text{on $S_R$},  
  \end{aligned}
\right.
\end{equation}
where $k>0$, $R>0$ and $u\in H^{\frac12}(S_R,\C)$ are given. Recall 
the space $H_R^{\C}$ defined in \eqref{eq:1} which enforces a weak
form of Sommerfeld's radiation condition. We wish
to construct the unique solution $w \in H_R^{\C}$ explicitly in terms
of a Fourier representation of the boundary datum $u$. For this we let $\Delta_S$
denote the Laplace--Beltrami operator on the unit sphere
$S_1\subset\R^N$, so that 
\begin{align}\label{eq:laplace-beltrami}
\Delta w &= \frac{1}{r^{N-1}}\frac{\partial}{\partial r}\left( r^{N-1}\frac{\partial w}{\partial r}\right) 
+ \frac{1}{r^2}\, \Delta_S w.
\end{align}
We recall (see e.g. \cite[Corollary 2.3]{stein-weiss}) that the linear span of the spherical harmonics 
(i.e. restrictions to $S_1$ of harmonic polynomials on $\R^N$ with complex coefficients) is dense in $L^2(S_1,\C)$. 
More precisely, denoting by $\cH^N_\ell$ the space of spherical harmonics of degree $\ell\in\N_0$, 
then $d_\ell^N:=\dim \cH^N_\ell = \frac{(N+2\ell-2)}{(N+\ell-2)}
\left(\substack{ N+\ell-2 \\
\ell}\right)$ 
if $\ell\geq 1$, $d_0^N:=\dim \cH^N_0=1$, and $\spann \bigcup\limits_{\ell=0}^\infty \cH^N_\ell$ is a dense subspace 
of $L^2(S_1,\C)$. Furthermore, according to \eqref{eq:laplace-beltrami}, every nonzero $\cY_\ell\in \cH^N_\ell$ is an 
eigenfunction of $\Delta_S$:
\begin{align}\label{eq:lb-eigen}
\Delta_S \cY_\ell + \ell(N+\ell-2) \cY_\ell =0 \quad\text{in }S_1
\end{align}
see \cite[Theorem 3.2.11]{groemer}. Moreover, starting from the orthogonal bases $\{1\}$ of $\cH^2_0$ and 
$\{e^{i\ell\theta}, e^{-i\ell\theta}\}$ of $\cH^2_\ell$, $\ell\geq 1$, 
($\theta=\theta(x_1,x_2)=\text{sgn}(x_2)\arccos x_1$) 
we can inductively construct, according to \cite[Lemma 3.5.3]{groemer}, orthogonal bases of $\cH^N_\ell$
$\{\cY^\ell_1,\ldots, \cY^\ell_{d_\ell^N}\}$, $\ell\in\N_0$ for all 
$N\geq 2$, with the property that for each element $\cY$ in such a basis, the element 
$\overline{\cY}$ also belongs to this basis.

\begin{proposition}
\label{sec:prop-capac-oper-1}
For $u \in H^{\frac12}(S_R)$ given by the expansion $u(R\xi)=\sum
\limits_{\ell=0}^\infty \sum \limits_{m=1}^{d^N_\ell} u^\ell_m
\cY^\ell_m(\xi)$, 

\noindent $\xi \in S_1$, the problem \eqref{eq:app1} has a
unique solution $w \in H_R^{\C}$ given by 
\begin{equation}
  \label{eq:4}
 w(x)= \left(\frac{r}{R}\right)^{-\frac{N-2}{2}} \sum_{\ell=0}^\infty  
 \frac{H^{(1)}_{\mu_\ell}(kr)}{H^{(1)}_{\mu_\ell}(kR)} \sum_{m=1}^{d^N_\ell} u^\ell_m \cY^\ell_m(\xi) 
 \quad \text{for $r \ge R$, $\xi \in S_1$ and $x=r\xi$,}
 \end{equation}
where, here and in the following, $\mu_\ell=\ell+\frac{(N-2)}{2}$ for $\ell \in \N_0$ and
$H^{(1)}_\mu$, $H^{(2)}_\mu$ are the two Hankel functions (or Bessel
functions of the third kind) of order $\mu$ (see e.g. \cite[\S
3.6]{watson}).
\end{proposition}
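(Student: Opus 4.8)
The plan is to construct $w$ by separation of variables and then to check, with the help of a few classical facts about Hankel functions, that the series \eqref{eq:4} converges in $H_R^\C$ (cf.\ \eqref{eq:1}) and is the weak solution of \eqref{eq:app1}; uniqueness will be obtained from the same separation of variables. Inserting $w(r\xi)=v(r)\,\cY(\xi)$ with $\cY\in\cH_\ell^N$ into $\Delta w+k^2w=0$ and using \eqref{eq:laplace-beltrami} and \eqref{eq:lb-eigen} gives the radial equation $v''+\tfrac{N-1}{r}v'+\bigl(k^2-\tfrac{\ell(N+\ell-2)}{r^2}\bigr)v=0$. With $v(r)=r^{-\frac{N-2}{2}}\psi(r)$ the first-order term disappears, and since $\tfrac{(N-2)^2}{4}+\ell(N+\ell-2)=\mu_\ell^2$ the equation becomes Bessel's equation of order $\mu_\ell$ in the variable $kr$; hence $r^{-\frac{N-2}{2}}H^{(1)}_{\mu_\ell}(kr)$ and $r^{-\frac{N-2}{2}}H^{(2)}_{\mu_\ell}(kr)$ span its solution space. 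Because the Wronskian of $J_\mu$ and $Y_\mu$ equals $\tfrac{2}{\pi t}\neq 0$, $J_\mu$ and $Y_\mu$ have no common positive zero, so $H^{(1)}_{\mu_\ell}=J_{\mu_\ell}+iY_{\mu_\ell}$ has none and in particular $H^{(1)}_{\mu_\ell}(kR)\neq 0$; thus each summand of \eqref{eq:4} is well defined, is smooth on $\overline{E_R}$, solves $\Delta w+k^2w=0$ pointwise in $E_R$, and equals $\cY^\ell_m$ on $S_R$. Using the large-argument expansion $H^{(1)}_\mu(t)=\sqrt{2/(\pi t)}\,e^{i(t-\mu\pi/2-\pi/4)}(1+O(1/t))$ together with the corresponding one for $(H^{(1)}_\mu)'$, each summand also satisfies the pointwise bounds in item (iii) of the definition of $\cR_R$, hence lies in $H_R^\C$ and fulfils \eqref{eqn:radiation2}.

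\emph{Convergence in $H_R^\C$.} By Parseval on $S_1$, the weighted $L^2(E_R)$-norms of $w$ and $\nabla w$ and the $L^2(E_R)$-norm of $\tfrac{\partial w}{\partial r}-ikw$ associated with \eqref{eq:4} decouple into sums over $\ell$ of $\|u^\ell\|^2:=\sum_m|u^\ell_m|^2$ times radial integrals whose integrands are products of $(r/R)^{-(N-2)}\,|H^{(1)}_{\mu_\ell}(kr)|^2/|H^{(1)}_{\mu_\ell}(kR)|^2$ with powers of $r$, possibly after a radial derivative, and --- for the tangential part of $\nabla w$ --- with the angular eigenvalue $\ell(N+\ell-2)/r^2$. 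To bound these I would use the following properties of $M_\mu(t):=|H^{(1)}_\mu(t)|^2=J_\mu(t)^2+Y_\mu(t)^2$ (see e.g.\ \cite{watson}): $M_\mu$ is nonincreasing in $t>0$ and nondecreasing in $\mu\ge 0$, so the ratio above is $\le 1$ and $|H^{(1)}_{\mu_\ell}(kR)|^2\ge M_0(kR)>0$ uniformly in $\ell$; in the oscillatory/turning-point regime $M_\mu(t)\le C\,t^{-2/3}$ for $t\ge\max(\mu,1)$; and below the turning point the Debye asymptotics give the order-uniform decay $M_{\mu_\ell}(kr)/M_{\mu_\ell}(kR)\le C\,(R/r)^{c\mu_\ell}$ for $R\le r\le\mu_\ell/(2k)$, with $c>0$ (valid once $\mu_\ell\ge 2kR$). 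Splitting each radial integral at $r\approx\mu_\ell/k$ and estimating the derivatives of $H^{(1)}_{\mu_\ell}$ by means of the three-term recurrence and the same asymptotics, one checks that the coefficient of $\|u^\ell\|^2$ in each of the three norms is $O(1+\ell)$ (the few indices with $\mu_\ell<2kR$ being handled separately as a finite sum). Since $u\in H^{\frac12}(S_R)$ means precisely $\sum_\ell(1+\ell)\|u^\ell\|^2<\infty$, the series \eqref{eq:4} therefore converges in $H_R^\C$ and in $H^1$ on every compact subset of $E_R$; passing to the limit in the weak formulation --- valid termwise because each summand is a classical solution --- shows that its sum is the weak solution of \eqref{eq:app1} with $\trace(w)=u$.

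\emph{Uniqueness.} Suppose $w\in H_R^\C$ solves \eqref{eq:app1} with $u=0$. By interior elliptic regularity $w\in\cC^\infty(E_R)$, and expanding $w(r\xi)=\sum_{\ell,m}c_{\ell m}(r)\,\cY^\ell_m(\xi)$ one finds, using \eqref{eq:lb-eigen}, that each $c_{\ell m}$ solves the radial equation above on $(R,\infty)$, so $c_{\ell m}(r)=r^{-\frac{N-2}{2}}\bigl(a_{\ell m}H^{(1)}_{\mu_\ell}(kr)+b_{\ell m}H^{(2)}_{\mu_\ell}(kr)\bigr)$. The $H^{(1)}$-part contributes a term that is $O(r^{-\frac{N+1}{2}})$ to $\tfrac{\partial w}{\partial r}-ikw$, hence square-integrable over $E_R$, whereas the $H^{(2)}$-part contributes one behaving like $-2ik\,r^{-\frac{N-1}{2}}e^{-ikr}$, which is not; since $\tfrac{\partial w}{\partial r}-ikw\in L^2(E_R)$, Parseval forces $b_{\ell m}=0$. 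Then the boundary condition $c_{\ell m}(R)=0$ together with $H^{(1)}_{\mu_\ell}(kR)\neq 0$ forces $a_{\ell m}=0$, so $c_{\ell m}\equiv 0$ for all $\ell,m$ and $w\equiv 0$.

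The step I expect to be the main obstacle is the order-uniform control of the Hankel ratios in the convergence estimate: both the elementary monotonicity bound ($\le 1$) and the large-argument bound degenerate in the turning-point zone $kr\approx\mu_\ell$, which is precisely where the tangential part of $\nabla w$ carries the potentially dangerous factor $\ell(N+\ell-2)$; one must verify that the length of that zone (of order $\mu_\ell/k$) together with the decay of the remaining radial weights still makes the corresponding contribution summable against $\sum_\ell(1+\ell)\|u^\ell\|^2$. Once the uniform asymptotics for $H^{(1)}_\mu$ and $(H^{(1)}_\mu)'$ are assembled (Nicholson's integral for $M_\mu$ and the Debye expansions, cf.\ \cite{watson}), the remaining computations are routine bookkeeping.
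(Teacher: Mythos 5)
Your construction is the same as the paper's: separation of variables reduces the problem to Bessel's equation of order $\mu_\ell=\ell+\tfrac{N-2}{2}$ in the variable $kr$, the general radial solution is a linear combination of $H^{(1)}_{\mu_\ell}$ and $H^{(2)}_{\mu_\ell}$ after the substitution $v(r)=r^{-(N-2)/2}\psi(r)$, and the third condition in the definition of $H_R^\C$ (which encodes Sommerfeld's outgoing radiation condition in an integrated form) kills the $H^{(2)}$ branch because $t\bigl|H^{(2)}_{\mu+1}(t)+iH^{(2)}_\mu(t)\bigr|$ grows like $t^{1/2}$ while the $H^{(1)}$ combination decays. The paper carries this out after rescaling $x\mapsto x/R$ and writes out the radiation integral explicitly with the recurrence formula and the $7.2(1)$--$(2)$ asymptotics from Watson; you work directly in the original variable and argue the same dichotomy more informally, which is equivalent. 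The nonvanishing $H^{(1)}_{\mu_\ell}(kR)\neq 0$ that you derive from the Wronskian $\cW(J_\mu,Y_\mu;r)=2/(\pi r)$ is implicit but unstated in the paper, so flagging it is a useful addition.

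Where you go further than the paper is in attempting to close the gap between ``each separated mode lies in $H_R^\C$'' and ``the full series converges in $H_R^\C$ and is the unique weak solution.'' The paper does not address series convergence or uniqueness at all in the proof of Proposition~\ref{sec:prop-capac-oper-1}; it simply writes down the series as ``the (unique) solution,'' implicitly deferring to standard Dirichlet-to-Neumann theory (\cite{nedelec} for $N=3$). Your Parseval decomposition, the use of the monotonicity of $M_\mu(t)=|H^{(1)}_\mu(t)|^2$ in $t$ (Watson \S13.74) and in $\mu$, and the mode-by-mode uniqueness argument (radiation condition forces $b_{\ell m}=0$, then the boundary condition forces $a_{\ell m}=0$) are exactly the right tools and match the spirit of the estimates the authors use in Lemma~\ref{sec:prop-capac-oper-2}. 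The one place your sketch would need genuine work is the order-uniform bound near the turning point $kr\approx\mu_\ell$, which you correctly identify as the delicate spot: the elementary bound $M_{\mu_\ell}(kr)/M_{\mu_\ell}(kR)\leq1$ is not by itself enough to beat the angular factor $\ell(N+\ell-2)/r^2$ in the tangential gradient, and one must use the sub-turning-point (Debye) decay $M_{\mu_\ell}(kr)/M_{\mu_\ell}(kR)\lesssim(R/r)^{c\mu_\ell}$ to localize the integral to a window of length $O(R/\mu_\ell)$, recovering the $O(1+\ell)$ coefficient that matches $H^{1/2}(S_R)$ summability. This is doable with Olver-type uniform asymptotics, but your claim ``routine bookkeeping'' understates the amount of care required; it is presumably for this reason that the paper cites \cite{nedelec} rather than proving convergence from scratch. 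In summary: same approach, with you filling in two details the paper omits and honestly flagging the remaining nontrivial step.
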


\begin{proof}
We start by rescaling the space variable $x$ to $\frac{x}{R}$. Setting $w(x)=v(\frac{x}{R})$, 
the problem \eqref{eq:app1} becomes
\begin{equation}
  \label{eq:app2}
\left\{
  \begin{aligned}
\Delta v + (kR)^2 v &= 0 &&\qquad \text{in $E_1$,}\\
v&=u_R &&\qquad \text{on $S_1$},  
  \end{aligned}
\right.
\end{equation}
where $u_R(\xi)=u(R\xi)$, $\xi\in S_1$. We first remark that $w\in H_R^{\C}$ if and only if $v$ belongs to
the space
\begin{align}
H_{1,kR}^{\C}:= \Bigl\{u \in H^1_{loc}(E_1,\C)\::\: &\frac{u}{(1+r^2)^{1/2}} \in
  L^2(E_1,\C),\: \frac{\nabla u}{(1+r^2)^{1/2}} \in
  L^2(E_1,\C^N), \nonumber\\
&\frac{\partial u}{\partial r}  -i kR u \in
 L^2(E_1,\C)  \label{eq:111}
\Bigr\}.
\end{align}
We will therefore seek in this space, solutions of the form $v^\ell_m(x)=f_\ell(r)\cY^\ell_m(\xi)$ for
some $\ell \in \N_0$, $1 \le m \le d_l^N$. Using \eqref{eq:laplace-beltrami} and~\eqref{eq:lb-eigen}, we may
rewrite \eqref{eq:app2} for functions of this form to obtain
\begin{align*}
   0 &=\{f_\ell''(r) +\frac{(N-1)}{r}f'_\ell(r) +\left((kR)^2 -\frac{\ell(N+\ell-2)}{r^2}\right)f_\ell(r)\} \cY^\ell_m(\xi).
\end{align*}
Setting $g_\ell(s)=(\frac{s}{kR})^{\frac{N-2}{2}}f_\ell(\frac{s}{kR})$, we find
$$
g_\ell''(s) + \frac{1}{s}g_\ell'(s) + \left\{1-\frac{(\ell+\frac{N-2}{2})^2}{s^2}\right\}g_\ell(s)=0,\quad s>0,
$$
which is Bessel's equation with parameter $\mu=\ell+\frac{N-2}{2}$. Its general solution is given by
$$
g(s)= A H^{(1)}_\mu(s) + B H^{(2)}_\mu(s),
$$
where $A, B\in\C$, and $H^{(1)}_\mu$, $H^{(2)}_\mu$ are the two Hankel
functions of order $\mu$.
Now, we observe that the Sommerfeld radiation condition, given as the
third condition in the definition of the space $H_{1,kR}^{\C}$ in \eqref{eq:111}, can be rewritten in the form
\begin{align*}
 &\int_{kR}^\infty s^{-1} \bigl| 2A s[ (H^{(1)}_\mu)'(s) -i H^{(1)}_\mu(s)] -A(N-2) H^{(1)}_\mu(s) \\
 &\qquad + 2B s[ (H^{(2)}_\mu)'(s) -i H^{(2)}_\mu(s)] -B(N-2)H^{(2)}_\mu(s)\bigr|^2\, ds<\infty.
\end{align*}
Using the recurrence formula $\displaystyle (H^{(p)}_\mu)'(s) = \frac{\mu}{s} H^{(p)}_\mu(s) - H^{(p)}_{\mu+1}(s)$, 
$p=1,2$, we obtain
\begin{align}
 &\int_{kR}^\infty s^{-1} \bigl| 2A s[ H^{(1)}_{\mu+1}(s) +i H^{(1)}_\mu(s)] +A(N-2-2\mu) H^{(1)}_\mu(s) \nonumber \\
 &\qquad + 2B s[ H^{(2)}_{\mu+1}(s) +i H^{(2)}_\mu(s)] +B(N-2-2\mu)H^{(2)}_\mu(s)\bigr|^2\, ds<\infty. \label{eq:somm2}
\end{align}
According to \cite[Formulas 7.2 (1) and (2)]{watson}, the asymptotic behavior of $H^{(p)}_\mu(s)$, $p=1,2$, is given by
$$
H^{(1)}_\mu(s)=\sqrt{\frac{2}{\pi s}}\, e^{i(s-\frac{2\mu+1}{4}\pi)}[1+O(s^{-1})],\;
 H^{(2)}_\mu(s)=\sqrt{\frac{2}{\pi s}}e^{-i(s-\frac{2\mu+1}{4}\pi)}[1+O(s^{-1})].
$$
As a consequence, 
$$
|H^{(p)}_\mu(s)|=s^{-\frac12}\{\sqrt{\frac{2}{\pi}}+O(s^{-1})\} \qquad
\text{for $p=1,2$,}
$$
$$ 
s|H^{(1)}_{\mu+1}(s)+i H^{(1)}_\mu(s)|= O(s^{-\frac12})\;\text{ and }\; s|H^{(2)}_{\mu+1}(s)+i H^{(2)}_\mu(s)|
= s^{\frac12}[2\sqrt{\frac{2}{\pi}}+O(s^{-1})].
$$
Thus, \eqref{eq:somm2} can only be satisfied, if $B=0$. We conclude that a function in $H_{1,kR}^{\C}$ of the
form $v^\ell_m(x)=f_\ell(r) Y^\ell_m(\xi)$ is a solution of the
differential equation in \eqref{eq:app2} if and only if it can be written as
$$
v^\ell_m(x)= A r^{-\frac{(N-2)}{2}}\, H^{(1)}_{\mu_\ell}(kRr)\, \cY^\ell_m(\xi)
$$
for some $A\in\C$ with $\mu_\ell= \ell +\frac{N-2}{2}$. Since
$u_R(\xi)=\sum \limits_{\ell=0}^\infty
\sum \limits_{m=1}^{d^N_\ell} u^\ell_m \cY^\ell_m(\xi)$, a solution of the
boundary value problem \eqref{eq:app2} is thus given by
$$
 v(x)=r^{-\frac{(N-2)}{2}} \sum_{\ell=0}^\infty \frac{H^{(1)}_{\mu_\ell}(kRr)}{H^{(1)}_{\mu_\ell}(kR)} 
 \sum_{m=1}^{d^N_\ell} u^\ell_m \cY^\ell_m(\xi)
 \quad \text{for $r \ge 1$, $\xi \in S_1$ and $x=r\xi$.}
$$
Rescaling back, we thus obtain the formula \eqref{eq:4} for the
(unique) solution $w \in H_R^\C$ of \eqref{eq:app1}.
\end{proof}

The formula~\eqref{eq:4} gives rise to the following expression for the capacity
operator $T_R: H^{\frac12}(S_R,\C) \to H^{-\frac12}(S_R,\C)$ (see
\cite{nedelec} for the case $N=3$):
\begin{equation}
 [T_Ru](R\xi) = \frac{1}{R}\sum_{\ell=0}^\infty z_\ell(kR)
 \sum_{m=1}^{d^N_\ell} u^\ell_m \cY^\ell_m(\xi),\qquad \xi \in S_1
\end{equation}
for $u \in H^{\frac12}(S_R,\C)$ given by $u(R\xi)=\sum \limits_{\ell=0}^\infty
\sum \limits_{m=1}^{d^N_\ell} u^\ell_m \cY^\ell_m(\xi)$, $\xi \in S_1$, where
$$
 z_\ell(r) = \frac{r\frac{d}{dr}H^{(1)}_{\mu_\ell}(r)}{H^{(1)}_{\mu_\ell}(r)}-\frac{N-2}{2},\quad r>0.
$$

We need some estimates for the coefficients $z_\ell(r)$. For this we
recall that 
$$
H^{(1)}_\mu=J_\mu+i Y_\mu,\quad \text{and}\quad H^{(2)}_\mu= J_\mu - i Y_\mu=
\overline{H^{(1)}_\mu} \qquad \text{for $\mu \in \R$,}  
$$
where $J_\mu$ and $Y_\mu$ denote the Bessel functions of the first and
second kind. Setting  
$G_\mu(r)=\frac{r\frac{d}{dr}H^{(1)}_\mu(r)}{H^{(1)}_\mu(r)}$, for $r>0$,
$\mu\in\R$, we see that 
\begin{align}
 \text{Re }G_\mu(r) &=\frac{r\,\text{Re}(\overline{H^{(1)}_\mu(r)}\frac{d}{dr}H^{(1)}_{\mu}(r))}{|H^{(1)}_\mu(r)|^2}
= \frac{r}{2} \frac{\frac{d}{dr}|H^{(1)}_\mu(r)|^2}{|H^{(1)}_\mu(r)|^2}
 =\frac{r}{2}\frac{\frac{d}{dr}(J_\mu^2(r)+Y_\mu^2(r))}{J_\mu^2(r)+Y_\mu^2(r)} \label{eqn:ReG}\\
 \text{Im }G_\mu(r) &=\frac{r\,\text{Im}(\overline{H^{(1)}_\mu(r)}\frac{d}{dr}H^{(1)}_{\mu}(r))}{|H^{(1)}_\mu(r)|^2}
 =\frac{r(J_\mu(r)Y_\mu'(r)-J_\mu'(r)Y_\mu(r))}{|H^{(1)}_\mu(r)|^2}\label{eqn:ImG}.
\end{align}

We need the following estimates (see \cite{nedelec} for the case $N=3$):
\begin{lemma}
\label{sec:prop-capac-oper-2}
For $r>0$ and $\ell\geq \max\{0,\frac{3-N}{2}\}$ we have
\begin{align}
 \frac{(N-1)}{2} &\leq -\text{Re }z_\ell(r) \leq \ell + N-2 \label{estim:Re}\\
 0 &< \text{Im }z_\ell(r) \leq r.\label{estim:Im}
\end{align}
In the case $N=2$, we also have $0<-\text{Re }z_0(r)\leq \frac12$ and $\text{Im }z_0(r)>0$ for all $r>0$.  
\end{lemma}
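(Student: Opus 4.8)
The plan is to reduce the lemma to pointwise bounds on $G_\mu(r):=\frac{r\frac{d}{dr}H^{(1)}_\mu(r)}{H^{(1)}_\mu(r)}$. Indeed $z_\ell(r)=G_{\mu_\ell}(r)-\frac{N-2}{2}$, the hypothesis $\ell\ge\max\{0,\frac{3-N}{2}\}$ is exactly the condition $\mu:=\mu_\ell\ge\frac12$, and $\mu_\ell+\frac{N-2}{2}=\ell+N-2$; hence \eqref{estim:Re} and \eqref{estim:Im} will follow once I establish that
\[
\tfrac12\le -\Re G_\mu(r)\le\mu\qquad\text{and}\qquad 0<\Im G_\mu(r)\le r\qquad\text{for all }\mu\ge\tfrac12,\ r>0.
\]
The borderline case $N=2$, $\ell=0$ (so $\mu_0=0$) will be handled separately. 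The classical inputs I shall use are the Wronskian identity $J_\mu(r)Y_\mu'(r)-J_\mu'(r)Y_\mu(r)=\frac{2}{\pi r}$ and Nicholson's integral formula
\[
M_\mu^2(r):=J_\mu^2(r)+Y_\mu^2(r)=\frac{8}{\pi^2}\int_0^\infty K_0(2r\sinh t)\cosh(2\mu t)\,dt\qquad(r>0,\ \mu\ge0),
\]
both found in \cite{watson}, together with the elementary properties of the modified Bessel functions $K_0,K_1$ of the second kind: $K_0>0$, $K_1>0$, $K_0'=-K_1$ on $(0,\infty)$, and $\int_0^\infty K_0(\sigma)\,d\sigma=\frac{\pi}{2}$.

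For the imaginary part, formula \eqref{eqn:ImG} combined with the Wronskian gives immediately $\Im G_\mu(r)=\frac{2}{\pi M_\mu^2(r)}>0$. Since $\cosh$ is increasing on $[0,\infty)$ and $2\mu t\ge t$ when $\mu\ge\frac12$, Nicholson's formula yields $M_\mu^2(r)\ge\frac{8}{\pi^2}\int_0^\infty K_0(2r\sinh t)\cosh t\,dt$, and the substitution $\sigma=2r\sinh t$ shows this last integral equals $\frac{2}{\pi r}$ (equivalently, it is $M_{1/2}^2(r)=\frac{2}{\pi r}$). Therefore $\Im G_\mu(r)\le r$.

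For the real part, formula \eqref{eqn:ReG} reads $\Re G_\mu(r)=\frac{r}{2}\frac{(M_\mu^2)'(r)}{M_\mu^2(r)}$. Differentiating Nicholson's formula under the integral sign and using $K_0'=-K_1$ shows that $(M_\mu^2)'<0$, hence $\Re G_\mu<0$ for every $\mu\ge0$. A single integration by parts in Nicholson's formula, with $dv=\cosh(2\mu t)\,dt$ (the boundary terms vanishing thanks to the rapid decay of $K_0$ at infinity), gives for $\mu>0$ that $M_\mu^2(r)=\frac{8r}{\pi^2\mu}\int_0^\infty\cosh t\,\sinh(2\mu t)\,K_1(2r\sinh t)\,dt$, and consequently
\[
-\Re G_\mu(r)=\mu\cdot\frac{\displaystyle\int_0^\infty\sinh t\,\cosh(2\mu t)\,K_1(2r\sinh t)\,dt}{\displaystyle\int_0^\infty\cosh t\,\sinh(2\mu t)\,K_1(2r\sinh t)\,dt},
\]
with both integrals positive because $K_1>0$ on $(0,\infty)$. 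The two required bounds then reduce to pointwise sign checks for $t\ge0$, $\mu\ge\frac12$: the identity $\cosh t\,\sinh(2\mu t)-\sinh t\,\cosh(2\mu t)=\sinh((2\mu-1)t)\ge0$ forces the numerator above not to exceed the denominator, giving $-\Re G_\mu(r)\le\mu$; while $2\mu\sinh t\,\cosh(2\mu t)-\cosh t\,\sinh(2\mu t)=(\mu-\tfrac12)\sinh((2\mu+1)t)-(\mu+\tfrac12)\sinh((2\mu-1)t)\ge0$ — which is trivial if $\mu=\frac12$ and for $\mu>\frac12$ amounts to $\frac{\sinh((2\mu+1)t)}{2\mu+1}\ge\frac{\sinh((2\mu-1)t)}{2\mu-1}$, true since $x\mapsto x^{-1}\sinh(xt)$ is nondecreasing on $(0,\infty)$ (its derivative is $x^{-2}(xt\cosh(xt)-\sinh(xt))\ge0$) — gives $-\Re G_\mu(r)\ge\frac12$.

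It remains to treat $N=2$, $\ell=0$, i.e.\ $\mu=0$: positivity of $\Im G_0$ and of $-\Re G_0$ is obtained exactly as above from the Wronskian and from $(M_0^2)'<0$, and the remaining estimate $-\Re G_0(r)\le\frac12$ is equivalent to $(rM_0^2(r))'\ge0$, which follows because the substitution $\sigma=2r\sinh t$ in Nicholson's formula gives $rM_0^2(r)=\frac{8}{\pi^2}\int_0^\infty\frac{r\,K_0(\sigma)}{\sqrt{4r^2+\sigma^2}}\,d\sigma$ and $r\mapsto r/\sqrt{4r^2+\sigma^2}$ is increasing. The only points I expect to require genuine care are organizing the integration-by-parts representation of $-\Re G_\mu$ (and justifying the vanishing of the boundary terms and the differentiation under the integral sign) and verifying the sign of $2\mu\sinh t\,\cosh(2\mu t)-\cosh t\,\sinh(2\mu t)$; once Nicholson's formula and the Wronskian are available, the rest is routine.
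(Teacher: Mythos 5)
Your argument is correct, and it takes a genuinely different technical route from the paper. Both proofs share the same skeleton: express $\Re G_\mu$ and $\Im G_\mu$ via $M_\mu^2(r)=J_\mu^2(r)+Y_\mu^2(r)$ using \eqref{eqn:ReG}--\eqref{eqn:ImG} and the Wronskian. But from there the paper cites Watson's ready-made monotonicity facts from \S 13.74 --- namely $(M_\mu^2)'<0$, and $(rM_\mu^2)'\le 0$ for $\mu\ge\tfrac12$ with the opposite sign for $\mu\le\tfrac12$ --- to get $-\tfrac12\ge\Re G_\mu$ and the $N=2$ bound, and then proves the upper bound $-\Re G_\mu\le\mu$ separately via the Hankel recurrence $\frac{d}{dr}H^{(1)}_\mu=H^{(1)}_{\mu-1}-\frac{\mu}{r}H^{(1)}_\mu$, which only covers $\mu\ge 1$ (sufficient here because the relevant $\mu_\ell$ all lie in $\{\tfrac12\}\cup[1,\infty)$). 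You instead go one layer deeper and derive everything directly from Nicholson's integral (\S 13.73), differentiating under the integral sign, integrating by parts once, and reducing both real-part bounds to elementary pointwise inequalities for hyperbolic functions. What this buys is a uniform, self-contained treatment for all real $\mu\ge\tfrac12$ with no recurrence step, and a proof of the $N=2$ endpoint by the same method; what it costs is that you must justify the boundary-term vanishing and the differentiation under the integral (which you correctly flag as the only delicate points --- both are fine because $K_0(2r\sinh t)$ decays doubly exponentially in $t$). The small verifications all check out: $M_{1/2}^2(r)=\frac{2}{\pi r}$ gives $\Im G_\mu\le r$; the identity $\cosh t\sinh(2\mu t)-\sinh t\cosh(2\mu t)=\sinh((2\mu-1)t)\ge0$ gives the upper bound; and the rearrangement $2\mu\sinh t\cosh(2\mu t)-\cosh t\sinh(2\mu t)=(\mu-\tfrac12)\sinh((2\mu+1)t)-(\mu+\tfrac12)\sinh((2\mu-1)t)\ge0$, via monotonicity of $x\mapsto x^{-1}\sinh(xt)$, gives the lower bound.
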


\begin{proof}
We first prove \eqref{estim:Re}. According to \cite[\S 13.74]{watson},
we have
\begin{align*}
 (i) &\qquad \frac{d}{dr}(J_\mu^2(r)+Y_\mu^2(r))< 0 \quad \text{for $r>0, \mu\in\R$}, \\
 (ii) &\qquad \frac{d}{dr}\, r(J_\mu^2(r)+Y_\mu^2(r))\left\{\begin{array}{ll}\leq 0 & \text{if }\mu\geq\frac12 \\ \\
 \geq 0 & \text{if }\mu\leq\frac12,\end{array}\right. \quad \text{for $r>0.$}
\end{align*}
From (i) we obtain $\text{Re }G_\mu(r)<0$ for all $r>0$, $\mu \in \R$, and (ii)
gives $\text{Re }G_\mu(r)\leq -\frac12$ for $r>0$, $\mu\geq\frac12$. 
Moreover, (ii) implies $\text{Re}G_{\frac12}(r)=-\frac12$ for $r>0$. 
Now, using the recurrence formula $\frac{d}{dr}H^{(1)}_\mu(r) = H_{\mu- 1}^{(1)}(r)-\frac{\mu}{r}H_\mu^{(1)}(r)$, 
we find that
\begin{align*}
 |&H^{(1)}_\mu(r)|^2 (\text{Re }G_\mu(r)+\mu) 
 =\text{Re}\Bigl(r\overline{H^{(1)}_{\mu}(r)} \frac{d}{dr}H^{(1)}_{\mu}(r)+\mu |H^{(1)}_\mu(r)|^2\Bigr)\\
 &=r\text{Re}(\overline{H^{(1)}_{\mu}(r)}H^{(1)}_{\mu-1}(r))
 =r\text{Re}(\overline{H^{(1)}_{\mu-1}(r)}H^{(1)}_{\mu}(r))\\
 &= -|H^{(1)}_{\mu-1}(r)|^2 (\text{Re }G_{\mu-1}(r)-(\mu-1)).
\end{align*}
Since $\text{Re }G_{\mu}(r)<0$ holds for all $\mu$, the previous formula gives $\text{Re }G_\mu(r) \geq -\mu$ 
for all $\mu\geq 1$. Summarizing, we have shown
$$
 -\mu \leq \text{Re }G_\mu(r) \leq -\frac12,\quad r>0 \qquad \text{for
   $\mu\in\{{\textstyle\frac12}\}\cup[1,\infty)$.}
$$
This shows \eqref{estim:Re}, since $z_\ell(r)=G_{\ell+\frac{N-2}{2}}(r) - \frac{N-2}{2}$.
In the case $\mu=0$, we have $-\frac12\leq \text{Re }G_0(r)<0$ for all $r>0$. Hence, when $N=2$, we obtain
$-\frac12 \leq z_0(r)<0$ for all $r>0$.

We now turn to the proof of \eqref{estim:Im}. Using \eqref{eqn:ImG}
and the fact that 
\begin{equation}
  \label{eq:32}
\cW(J_\mu,Y_\mu;r):=J_\mu(r)Y_\mu'(r)-J_\mu'(r)Y_\mu(r) =\frac{2}{\pi
  r}\qquad \text{for $\mu \in \R$, $r>0$,}
\end{equation}
(see \cite[\S 3.63 (1)]{watson}), we find that 
$$
\text{Im }G_\mu(r)=\frac{r
  \cW(J_\mu,Y_\mu;r)}{|H^{(1)}_\mu(r)|^2}=\frac{2}{\pi
  |H^{(1)}_\mu(r)|^2}>0\qquad \text{for $\mu \in \R$, $r>0$.} 
$$
Furthermore, using (ii) above, we see that 
$$
r|H^{(1)}_\mu(r)|^2\geq\lim\limits_{t\to\infty}t|H^{(1)}_\mu(t)|^2=\frac2\pi
\qquad \text{for $r>0, \mu \geq \frac12$.}
$$
We thus obtain $\text{Im }G_\mu(r)\leq r$ for $r>0$, $\mu\geq\frac12$.
As before, \eqref{estim:Im} follows from the identity $z_\ell(r)=G_{\ell+\frac{N-2}{2}}(r) - \frac{N-2}{2}$.
\end{proof}

Recall that in Section~\ref{sec-some-tools} we have also introduced
the operator
\begin{equation}
  \label{eq:21}
K_R: H^{\frac12}(S_R) \to H^{-\frac12}(S_R),\qquad K_R u = \Re [T_R u],
\end{equation}
where $H^{\frac12}(S_R)=H^{\frac12}(S_R,\R)$,
$H^{-\frac12}(S_R)=H^{-\frac12}(S_R,\R)$. For $u \in
H^{\frac12}(S_R)$ given by $u(R\xi)=\sum \limits_{\ell=0}^\infty
\sum \limits_{m=1}^{d^N_\ell} u^\ell_m \cY^\ell_m(\xi)$ we then have
\begin{equation}
\label{eq:18}
 K_Ru = \frac{1}{R}\sum_{\ell=0}^\infty \text{Re}(z_\ell(kR)) \sum_{m=1}^{d^N_\ell} u^\ell_m \cY^\ell_m,
\end{equation}
since $\sum\limits_{m=1}^{d^N_\ell} u^\ell_m \cY^\ell_m$ is a real-valued
function for every
$\ell \in \N_0$. Using Lemma~\ref{sec:prop-capac-oper-2}
we may therefore easily complete the

\begin{proof}[Proof of Lemma~\ref{sec:capacity-operator}]
We first note that, for $s \in \R$, one may define an equivalent norm
on $H^s(S_R,\C)$ by 
$$
\|u\|_s^2 = \sum_{\ell=0}^\infty (1+\ell^2)^s \sum_{m=1}^{d^N_\ell}
|u^\ell_m|^2
$$
for $u \in H^{\frac12}(S_R,\C)$ given by $u(R\xi)=\sum \limits_{\ell=0}^\infty
\sum \limits_{m=1}^{d^N_\ell} u^\ell_m \cY^\ell_m(\xi)$. Hence, by
\eqref{estim:Re} and \eqref{estim:Im}, the capacity operator $T_R:
H^{\frac12}(S_R,\C) \to H^{-\frac12}(S_R,\C)$ is bounded, and thus
$K_R$ defined in \eqref{eq:21} is bounded as well. In particular, the
integrals on both sides of \eqref{eq:16} exist, and by \eqref{eq:18} and the
orthogonality of spherical harmonics (corresponding to different
values of $\ell$) they coincide. Finally, the second inequality in
\eqref{eq:5} also follows, by orthogonality, from \eqref{eq:18} and
the estimates in Lemma~\ref{sec:prop-capac-oper-2}.
\end{proof}

Next, we study the eigenspace $X^0$ of the eigenvalue
problem \eqref{eq:10} corresponding to the eigenvalue $k^2$, and we complete the 

\begin{proof}[Proof of Lemma~\ref{sec:some-tools-2}(iv)]
Recall that  $u \in X^0$ if and only if $u$ solves 
\begin{equation}
\label{eq:12}
\left\{
  \begin{aligned}
\Delta u + k^2 u &= 0 &&\qquad \text{in $B_R$,}\\
\frac{\partial u}{\partial\eta}&=v &&\qquad \text{on $S_R$}  
  \end{aligned}
\right.
\end{equation}
with $v=K_R u$ on $S_R$. Put, as before, $\mu_\ell= \ell + \frac{N-2}{2}$ for $\ell \in \N_0$. 
In the case where
\mbox{$\frac{d}{dr}\left(r^{\frac{2-N}{2}}J_{\mu_\ell}(kr)\right)\Big
  |_{r=R}\not= 0$} for all $\ell\in\N_0$, 
the inhomogeneous Neumann problem \eqref{eq:12} has, for given 
$v \in  H^{-\frac{1}{2}}(S_R)$, the unique solution 
$$
u(r\xi) = \left(\frac{r}{R}\right)^{-\frac{N-2}{2}}\sum_{\ell=0}^\infty
\frac{J_{\mu_\ell}(kr)}{kJ'_{\mu_\ell}(kR)
  - \frac{(N-2)}{2R}J_{\mu_\ell}(kR)} \sum_{m=1}^{d^N_\ell}v^\ell_m
\cY^\ell_m(\xi),  \qquad 0<r<R,
$$
where the coefficients $v^\ell_m$ are determined by $v(R\xi)=
\sum\limits_{\ell=0}^\infty\sum\limits_{m=1}^{d^N_\ell} v^\ell_m
\cY^\ell_m(\xi)$. In particular, the restriction of $u$ to $S_R$
satisfies 
$$
u(R\xi) = \sum_{\ell=0}^\infty\sum_{m=1}^{d^N_\ell}u^\ell_m
\cY^\ell_m(\xi)  \qquad \text{with} \;
u^\ell_m = \frac{J_{\mu_\ell}(kR)}{kJ'_{\mu_\ell}(kR) - \frac{(N-2)}{2R}J_{\mu_\ell}(kR)} v^\ell_m.
$$
If, in addition, $v= K_Ru$ on $S_R$, then by \eqref{eq:18} we have 
\begin{align*}
v^\ell_m&= \frac{1}{R}\text{Re} \left(\frac{kR\frac{d}{dr}H^{(1)}_{\mu_\ell}(kR)}{H^{(1)}_{\mu_\ell}(kR)}
-\frac{N-2}{2} \right) u^\ell_m\\
&=k\left(\frac{J'_{\mu_\ell}(kR)J_{\mu_\ell}(kR)+Y'_{\mu_\ell}(kR)Y_{\mu_\ell}(kR)}{J^2_{\mu_\ell}(kR)
+Y^2_{\mu_\ell}(kR)}-\frac{N-2}{2kR}\right) u^\ell_m
\end{align*}
and therefore  
$$
u^\ell_m = \frac{J_{\mu_\ell}(kR)}{J'_{\mu_\ell}(kR)
  - \frac{(N-2)}{2k R}J_{\mu_\ell}(kR)}\left(\frac{J'_{\mu_\ell}(kR)J_{\mu_\ell}(kR)
  +Y'_{\mu_\ell}(kR)Y_{\mu_\ell}(kR)}{J^2_{\mu_\ell}(kR)+Y^2_{\mu_\ell}(kR)}
-\frac{N-2}{2kR}\right)u^\ell_m
$$
for $\ell\in\N_0$, $1\leq m\leq d^N_\ell$. As a consequence of
(\ref{eq:32}), this gives for each $\ell\in\N_0$ the alternative
$$
u^\ell_m=0\quad\text{for all }1\leq m\leq d^N_\ell \quad\text{ or }\quad Y_{\mu_\ell}(kR)=0.
$$
We may now finish the proof by setting 
$$
\cD:= \Bigl\{R >0 \::\: \text{$\frac{d}{dr}\left(r^{\frac{2-N}{2}}J_{\mu_\ell}(kr)\right)\Big
|_{r=R}=0$ or $Y_{\mu_\ell}(kR) = 0$ for some $\ell\in\N_0$}\Bigr\}
$$
Indeed, since $J_{\mu_\ell}$ and $Y_{\mu_\ell}$ are analytic functions
on $(0,\infty)$ for every $\ell \in \N_0$, the set $\cD$ is
countable. Moreover, for
$R \in (0,\infty) \setminus \cD$ and $u \in X^0$ we conclude
$u^\ell_m= 0$ for all $\ell \in \N_0$, $1 \le m \le d_\ell^N$ and
therefore $u \equiv 0$.
\end{proof}

We close this section with the 

\begin{proof}[Proof of Theorem~\ref{thm:1-1}]
Let $R>0$ be fixed, and let $u \in \cR_R$ with $u|_{S_R} \not \equiv
0$. Furthermore, let $R'>R>0$ be given such that $u \in
\cR_{R'}$. Then 
we have: 
\begin{itemize}
\item[(i)] In $E_R$, $u$ coincides with the real
part of a solution $w_1$ of the linear Helmholtz equation satisfying~\eqref{eqn:radiation1};
\item[(ii)] In $E_{R'}$, $u$ coincides with the real
part of a solution $w_2$ of the linear Helmholtz equation satisfying~\eqref{eqn:radiation1};
\item[(iii)] $u|_{S_R} \equiv w_1|_{S_R}$ and $u|_{S_{R'}} \equiv w_2|_{S_{R'}}$.
\end{itemize}
Since $w:=w_1-w_2$ also
satisfies the linear Helmholtz equation in $E_{R'}$ together with
\eqref{eqn:radiation1} and $\Re(w) \equiv 0$ in $E_{R'}$, it follows
that $w(x)= o(r^{\frac{1-N}{2}})$ as $r=|x| \to \infty$ and hence $w
\equiv 0$ in $E_{R'}$ by a classical result of Rellich~\cite[Satz 1]{rellich43}.
Hence $w_1 = w_2$ in $E_{R'}$, and by (iii) this implies that 
$\text{Im}(w_1|_{S_{R'}})\equiv \text{Im}(w_2|_{S_{R'}})\equiv 0$. Writing again 
\begin{equation}
  \label{eq:3}
u(R \xi) = w_1(R\xi) =\sum_{\ell=0}^\infty
u^{\ell}(\xi),\qquad \xi \in S_1,
\end{equation}
with $u^{\ell}(\xi) = \sum \limits_{m=1}^{d^N_\ell} u^\ell_m
\cY^\ell_m(\xi)$, we have, by Proposition~\ref{sec:prop-capac-oper-1},
$$
w_1(R' \xi)=\left(\frac{R'}{R}\right)^{-\frac{(N-2)}{2}} \sum_{\ell=0}^\infty  
 \frac{H^{(1)}_{\mu_\ell}(kR')}{H^{(1)}_{\mu_\ell}(kR)} 
 u^{\ell}(\xi), 
  \qquad \xi \in S_1,\\
$$
where the functions $\xi \mapsto u^{\ell}(\xi)$, $\ell \in \N_0$ are real-valued as a consequence of (\ref{eq:3}) and
the fact that $u$ is real-valued. Moreover, the assumption $u|_{S_R}
\not \equiv 0$ forces that $u^\ell \not \equiv 0$ on $S_1$ for at least
one $\ell \in \N_0$, and therefore 
 $$
\text{Im}\Bigl(\frac{H^{(1)}_{\mu_\ell}(kR')}{H^{(1)}_{\mu_\ell}(kR)}\Bigr)
=0, \qquad \text{i.e.,} \qquad
J_{\mu_\ell}(kR')Y_{\mu_\ell}(kR)=J_{\mu_\ell}(kR)Y_{\mu_\ell}(kR').
$$
Since the functions $J_{\mu_\ell}$ and $Y_{\mu_\ell}$ are real
analytic, the latter can only happen for countably many
$R'>R$. We thus conclude that $u \in \cR_{R'}$ for at most countably
many $R'>R$.\\
It remains to be shown that problem
(\ref{eq:33}),~(\ref{eqn:radiation2}) admits infinitely many solutions
if the nonlinearity $f$ satisfies $(f0)$--$(f4)$. For this we fix $R_1
\in (0,\infty) \setminus \cD$ such that $\Omega \subset B_{R_1}$; then Theorem~\ref{thm:1}
yields a nontrivial solution $u_1 \in \cR_{R_1}$ of
(\ref{eq:33}),~(\ref{eqn:radiation2}). The nontriviality of $u$
implies that $u_1|_{S_{R_1}} \not \equiv 0$, since otherwise $u \equiv
  0$ on $E_{R_1}$ and therefore on all of $\R^N$ as a consequence of
  unique continuation. Hence there exists
$R_2 \in (R_1,\infty) \setminus \cD$ such that $u_1 \not \in \cR_{R_2}$, whereas 
Theorem~\ref{thm:1} yields a solution $u_2 \in \cR_{R_2}$ of
(\ref{eq:33}),~(\ref{eqn:radiation2}). Moreover, there exists
$R_3 \in (R_2,\infty) \setminus \cD$ such that $u_1,u_2 \not \in \cR_{R_2}$, whereas Theorem~\ref{thm:1} yields a solution $u_3 \in \cR_{R_3}$ of
(\ref{eq:33}),~(\ref{eqn:radiation2}). Inductively, we now obtain an infinite
sequence of pairwise different solutions of (\ref{eq:33}),~(\ref{eqn:radiation2}).
\end{proof}


\begin{thebibliography}{3}

\bibitem{alves-souto-montenegro12}
Alves, C.O., Souto, M.A.S., Montenegro, M.: Existence of solution for two
  classes of elliptic problems in {$\R^N$} with zero mass.
\newblock J. Differential Equations \textbf{252}, 5735--5750 (2012)

\bibitem{ambrosetti-malchiodi-felli:05}
Ambrosetti, A., Felli, V., Malchiodi, A.: Ground states of nonlinear
  {S}chr{\"o}dinger equations with potentials vanishing at infinity.
\newblock J. Eur. Math. Soc. (JEMS) \textbf{7}, 117--144 (2005)

\bibitem{ambrosetti-malchiodi-a}
Ambrosetti, A., Malchiodi, A.: Perturbation Methods and Semilinear Elliptic
  Problems on {$\R^n$}, \emph{Progress in Mathematics}, vol. 240.
\newblock Birkh{\"a}user, Basel (2006)

\bibitem{bartolo-benci-fortunato83}
Bartolo, P., Benci, V., Fortunato, D.: Abstract critical point theorems and
  applications to some nonlinear problems with ``strong'' resonance at
  infinity.
\newblock Nonlinear Anal. \textbf{7}, 981--1012 (1983)

\bibitem{bartsch93}
Bartsch, T.: Infinitely many solutions of a symmetric {D}irichlet problem.
\newblock Nonlinear Anal. \textbf{20}, 1205--1216 (1993)

\bibitem{bartsch-ding99}
Bartsch, T., Ding, Y.: On a nonlinear {S}chr{\"o}dinger equation with periodic
  potential.
\newblock Math. Ann. \textbf{313}, 15--37 (1999)

\bibitem{bartsch-willem93}
Bartsch, T., Willem, M.: Infinitely many nonradial solutions of a {E}uclidean
  scalar field equation.
\newblock J. Funct. Anal. \textbf{177}, 447--460 (1993)

\bibitem{cerami06}
Cerami, G.: Some nonlinear elliptic problems in unbounded domains.
\newblock Milan J. Math. \textbf{74}, 47--77 (2006)

\bibitem{costa-tehrani01}
Costa, D.G., Tehrani, H.: Existence of positive solutions for a class of
  indefinite elliptic problems in {$\R^N$}.
\newblock Calc. Var. Partial Differential Equations \textbf{13}, 159--189
  (2001)

\bibitem{dunford-schwartz}
Dunford, N., Schwartz, J.T.: Linear Operators. Part II: Spectral Theory.
\newblock John Wiley {\&} Sons, New York (1963)

\bibitem{groemer}
Groemer, H.: Geometric Applications of Fourier Series and Spherical Harmonics,
  \emph{Encyclopedia of Mathematics and its Applications}, vol.~61.
\newblock Cambridge University Press, Cambridge (1996)

\bibitem{hartman}
Hartman, P.: Ordinary Differential Equations, \emph{Classics in Applied
  Mathematics}, vol.~38, second edn.
\newblock Society for Industrial and Applied Mathematics (SIAM), Philadelphia,
  PA (2002)
  
\bibitem{joergens61}
J{\"o}rgens, K.: Das {A}nfangswertproblem im {G}ro{\ss}en f{\"u}r eine {K}lasse
  nichtlinearer {W}ellenglei\-chungen.
\newblock Math. Z. \textbf{77}, 295--308 (1961)

\bibitem{kato59}
Kato, T.: Growth properties of solutions of the reduced wave equation with a
  variable coefficient.
\newblock Comm. Pure Appl. Math. \textbf{12}, 403--425 (1959)

\bibitem{kuzin-pohozaev}
Kuzin, I., Pohozaev, S.: Entire Solutions of Semilinear Elliptic Equations,
  \emph{Progress in Nonlinear Differential Equations and their Applications},
  vol.~33.
\newblock Birkh{\"a}user, Basel (1997)

\bibitem{li-wang11}
Li, G., Wang, C.: The existence of a nontrivial solution to a nonlinear
  elliptic problem of linking type without the {A}mbrosetti-{R}abinowitz
  condition.
\newblock Ann. Acad. Sc. Fenn. Math \textbf{36}, 461--480 (2011)

\bibitem{liu12}
Liu, S.: On superlinear {S}chr{\"o}dinger equations with periodic potential.
\newblock Calc. Var. Partial Differential Equations \textbf{45}, 1--9 (2012)

\bibitem{moroz-van-schaftingen10}
Moroz, V., Van Schaftingen, J.: Semiclassical stationary states for nonlinear
  {S}chr{\"o}dinger equations with fast decaying potentials.
\newblock Calc. Var. Partial Differential Equations \textbf{37}, 1--27 (2010)

\bibitem{nedelec}
N\'ed\'elec, J.C.: Acoustic and Electromagnetic Equations. Integral
  Representations for Harmonic Problems, \emph{Applied Mathematical Sciences},
  vol. 144.
\newblock Springer-Verlag, New-York (2001)

\bibitem{rabinowitz91}
Rabinowitz, P.H.: A note on a semilinear elliptic equation on {$\R^n$}, pp.
  307--317.
\newblock Quaderni. Scuola Norm. Sup. Pisa (1991)

\bibitem{rellich43}
Rellich, F.: {\"U}ber das asymptotische {V}erhalten der {L\"o}sungen von
  {$\Delta u+\lambda u=0$} in unendlichen {G}ebieten.
\newblock Jber. Deutsch. Math. Verein. \textbf{53}, 57--65 (1943)

\bibitem{schneider:03}
Schneider, M.: Existence and nonexistence of positive solutions of indefinite
  elliptic problems in {$\R^N$}.
\newblock Adv. Nonlinear Stud. \textbf{3}, 231--259 (2003)

\bibitem{sommerfeld12}
Sommerfeld, A.: Die {G}reensche {F}unktion der {S}chwingungsgleichung.
\newblock Jber. Deutsch. Math. Verein. \textbf{21}, 309--353 (1912)

\bibitem{stein-weiss}
Stein, E.M., Weiss, G.: Introduction to Fourier Analysis on Euclidean Spaces,
  \emph{Princeton Mathematical Series}, vol.~32.
\newblock Princeton University Press, Princeton NJ (1971)

\bibitem{struwe}
Struwe, M.: Variational methods: Applications to Nonlinear Partial Differential
  Equations and Hamiltonian Systems, fourth edn.
\newblock Springer-Verlag, Berlin (2008)

\bibitem{stuart97}
Stuart, C.A.: Bifurcation from the essential spectrum.
\newblock In: Topological Nonlinear Analysis, II (Frascati, 1995),
  \emph{Progress in Nonlinear Differential Equations and their Applications},
  vol.~27, pp. 397--443. Birkh{\"a}user, Boston (1997)

\bibitem{szulkin-weth09}
Szulkin, A., Weth, T.: Ground-state solutions for some indefinite variational
  problems.
\newblock J. Funct. Anal. \textbf{257}, 3802--3822 (2009)

\bibitem{watson}
Watson, G.N.: A Treatise on the Theory of Bessel Functions. Reprint of the
  second (1944) edition.
\newblock Cambridge Mathematical Library. Cambridge University Press, Cambridge
  (1996)

\bibitem{weth05}
Weth, T.: Global bifurcation branches for radially symmetric {S}chr{\"o}dinger
  equations.
\newblock Adv. Differential Equations \textbf{10}, 721--746 (2005)

\bibitem{willem}
Willem, M.: Minimax Theorems, \emph{Progress in Nonlinear Differential
  Equations and Their Application}, vol.~24.
\newblock Birkh{\"a}user, Basel (1996)

\bibitem{willem-zou03}
Willem, M., Zou, W.: On a {S}chr{\"o}dinger equation with periodic potential
  and spectrum point zero.
\newblock Indiana Univ. Math. J. \textbf{52}, 109--132 (2003)

\end{thebibliography}
\end{document}